\let\oldbibliography\thebibliography
\renewcommand{\thebibliography}[1]{%
\oldbibliography{#1}%
\setlength{\itemsep}{0pt}%
}
\newtheorem{theorem}{Theorem}[section]
\newtheorem{lemma}[theorem]{Lemma} 
\newtheorem{proposition}[theorem]{Proposition}
\newtheorem{definition}[theorem]{Definition}
\newtheorem{example}[theorem]{Example}
\newtheorem{corollary}[theorem]{Corollary}
\newtheorem{remark}[theorem]{Remark}
\newcommand{\R}{\mathbb R}
\newcommand{\bt}{\begin{theorem}}
\newcommand{\et}{\end{theorem}}
\newcommand{\bl}{\begin{lemma}}
\newcommand{\el}{\end{lemma}}
\newcommand{\bd}{\begin{definition}}
\newcommand{\ed}{\end{definition}}
\newcommand{\bc}{\begin{corollary}}
\newcommand{\ec}{\end{corollary}}
\newcommand{\bp}{\begin{proof}}
\newcommand{\ep}{\end{proof}}
\newcommand{\bx}{\begin{example}}
\newcommand{\ex}{\end{example}}
\newcommand{\bi}{\begin{exercise}}
\newcommand{\ei}{\end{exercise}}
\newcommand{\bo}{\begin{prop}}
\newcommand{\eo}{\end{prop}}
\newcommand{\br}{\begin{remark}}
\newcommand{\er}{\end{remark}}
\newcommand{\be}{\begin{equation}}
\newcommand{\ee}{\end{equation}}
\newcommand{\ba}{\begin{align}}
\newcommand{\ea}{\end{align}}
\newcommand{\bn}{\begin{enumerate}}
\newcommand{\en}{\end{enumerate}}
\newcommand{\bg}{\begin{align*}}
\newcommand{\bcs}{\begin{cases}}
\newcommand{\ecs}{\end{cases}}
\newcommand{\bean}{\begin{eqnarray*}}
\newcommand{\eean}{\end{eqnarray*}}
\numberwithin{equation}{section}
\begin{document}
\title{ {\bf Local behavior of positive solutions of higher order conformally invariant equations with a singular set} }
\date{May  25,  2020} 
\author{\\ { Xusheng Du
}
 \;\;  and \;\;   Hui Yang
 }
\maketitle

\begin{center}
\begin{minipage}{130mm}
\begin{center}{\bf Abstract}\end{center}
We study some properties of positive solutions to the higher order conformally invariant equation with a singular set 
$$
(-\Delta)^m u = u^{\frac{n+2m}{n-2m}} ~~~~~~  \textmd{in} ~ \Omega \backslash \Lambda,
$$
where $\Omega \subset \mathbb{R}^n$ is an open domain, $\Lambda$ is a closed subset of $\mathbb{R}^n$,  $1 \leq m  < n/2$ and $m$ is an integer.   We first establish an asymptotic blow up rate estimate for positive solutions near the singular set $\Lambda$ when $\Lambda \subset \Omega$ is a compact set with the upper Minkowski dimension $\overline{\textmd{dim}}_M(\Lambda) < \frac{n-2m}{2}$, or is a smooth $k$-dimensional closed manifold with $k\leq \frac{n-2m}{2}$.   We also show the asymptotic symmetry of singular positive solutions suppose  $\Lambda \subset \Omega$ is a smooth $k$-dimensional closed manifold with $k\leq \frac{n-2m}{2}$.  Finally, a global symmetry result for solutions is obtained when $\Omega$ is the whole space and $\Lambda$ is a $k$-dimensional hyperplane with $k\leq \frac{n-2m}{2}$.

\vskip0.10in

\noindent{\it Key words:} higher order conformally invariant equations,  singular set,  local behavior,  symmetry,  local integral equations.

\vskip0.10in

%\noindent {\it Mathematics Subject Classification (2010):  }

\end{minipage}

\end{center}

\vskip0.20in

\section{Introduction and main results}

In the seminal paper \cite{CGS}, Caffarelli, Gidas and Spruck studied the local behavior of positive solutions to the conformally invariant scalar curvature equation
\begin{equation}\label{CGS-01}
-\Delta u = u^{\frac{n+2}{n-2}}
\end{equation}
in the punctured unit ball $B_1\backslash \{0\} \subset \mathbb{R}^n$, $n \geq 3$, with an isolated singularity at the origin.   More precisely, they proved that every local positive solution $u$ is asymptotically radially symmetric near $0$, that is, $u(x)=\bar{u}(|x|)(1 + O(|x|))$ as $x \to 0$  where $\bar{u}(|x|)$ is the spherical average of $u$ on the sphere $\partial B_{|x|}(0)$. Furthermore, they showed that $u$ has a precise asymptotic behavior near the isolated singularity $0$.  Subsequent to \cite{CGS},  equation \eqref{CGS-01} and related second-order Yamabe type equations with isolated singularities have attracted a lot of attention; see, for example,  \cite{HLL,HLT,KMPS,LC96,Li06,Mar} and references therein.  The importance of studying the distributional solutions of \eqref{CGS-01} and characterizing the singular set of $u$ was indicated in the classical work of Schoen and Yau \cite{Sch88,SY88} on complete locally conformally flat manifolds.  Solutions of \eqref{CGS-01} with an isolated singularity are the simplest examples of those singular distributional solutions.  In \cite{Chen-Lin95}, Chen and Lin studied a more general case when the singular set is not isolated, which is the equation \eqref{CGS-01} in $B_1 \backslash \Lambda$ with $\Lambda$ being a higher-dimensional singular set  other than a single point.  We also refer the reader to \cite{Chen-Lin97,Zhang02} for the local estimates of positive solutions near the singular set of second order conformal scalar curvature equation.

In this paper, we are interested in the local behavior of positive solutions of the higher order conformally invariant equation with a singular set $\Lambda$: 
\begin{equation}\label{H-01} 
(-\Delta)^m u = u^{\frac{n+2m}{n-2m}} ~~~~~~  \textmd{in} ~ \Omega \backslash \Lambda,
\end{equation} 
where $\Omega \subset \mathbb{R}^n$ is an open domain, $\Lambda$ is a closed subset of $\mathbb{R}^n$,  $1 \leq m  < n/2$ and $m$ is an integer.   This equation with the critical Sobolev exponent arises as the Euler-Lagrangian equations of Sobolev inequalities \cite{CLO06,Li04,Lieb} and also arises in conformal geometry.   More precisely,  let $|dx|^2$ be the Euclidean metric and consider a conformal change $g:=u^{\frac{4}{n-4}}|dx|^2$ for some positive smooth function $u$. Then the fourth order Paneitz operator with respect to the metric  $g$  satisfies 
$$
P_2^g = u^{-\frac{n+4}{n-4}} \Delta^2(u \cdot),
$$
and the  $Q$-curvature of $g$ is given by 
$$
Q_g =\frac{2}{n-4}P_2^g(1)=\frac{2}{n-4} u^{-\frac{n+4}{n-4}} \Delta^2 u. 
$$
Hence, each positive solution $u$ of \eqref{H-01} with $m=2$   induces a conformal metric $g=u^{\frac{4}{n-4}}|dx|^2$ which has  positive constant $Q$-curvature in $\Omega \backslash \Lambda$.  For an introduction to the $Q$-curvature problem see, for instance, Hang-Yang \cite{HY}.  See also Gursky-Malchiodi \cite{GM} and Hang-Yang \cite{HY16} for the recent progresses on the $Q$-curvature problem on Riemannian manifolds.

When $\Omega = \mathbb{R}^n$ and $\Lambda=\{0\}$  is an isolated singularity,   Lin \cite{Lin98} proved that all the singular positive solutions of \eqref{H-01}  are radially symmetric about $0$ for $m=2$.   Frank-K\"{o}nig \cite{Fr-K19} classified all these singular radial solutions,  called the Fowler solutions or Delaunay type solutions, using ODE analysis.  Recently,  the higher order equation \eqref{H-01} in the punctured unit ball $B_1\backslash \{0\}$ was studied by Jin and Xiong in \cite{JX19}.  In that paper, the authors showed  the asymptotic radial symmetry of singular positive solutions and their sharp blow up rate near $0$ under the sign assumptions 
\begin{equation}\label{Si-Assu}
(-\Delta)^s u \geq 0~~~~~~ \textmd{in} ~ B_1\backslash \{0\}, ~~ s=1, \dots, m-1. 
\end{equation}
In \cite{R20},   for when  $m=2$,  Ratzkin  proved  that every local solution $u$ of \eqref{H-01}  which  satisfies  \eqref{Si-Assu}  has a refined asymptotic behavior near the isolated singularity $0$  based on the classification result of Frank-K\"{o}nig \cite{Fr-K19} and a priori upper estimates of Jin-Xiong \cite{JX19}.

In this paper, we  would like to continue the previous study of Jin-Xiong \cite{JX19} on singular positive solutions of the higher order equation \eqref{H-01} in a more general case,  that is,  $\Lambda$ is a singular set rather than  an isolated point.  For the case with  a higher-dimensional singular set, the behavior of solutions is expected to be more complicated.  To state our first  result, we recall the definition of the Minkowski dimension (see, e.g., \cite{KLV13,Mattila}).   Suppose  $E\subset \mathbb{R}^n$ is a compact set, the $\lambda$-dimensional Minkowski $r$-content of $E$ is defined by 
$$
\mathcal{M}_r^\lambda(E) = \inf \left\{n r^\lambda ~ \big| ~  E \subset \bigcup_{k=1}^n B(x_k, r),  ~ x_k \in E \right\}, 
$$ 
and the upper and lower Minkowski dimensions are defined, respectively, as 
$$
\overline{\textmd{dim}}_M(E) = \inf \Big\{\lambda \geq 0  ~ \big| ~  \limsup_{r\to 0} \mathcal{M}_r^\lambda(E) =0 \Big\}, 
$$ 
$$
\underline{\textmd{dim}}_M(E) = \inf \Big\{\lambda \geq 0  ~ \big| ~  \liminf_{r\to 0} \mathcal{M}_r^\lambda(E) =0 \Big\}. 
$$
If $\overline{\textmd{dim}}_M(E) = \underline{\textmd{dim}}_M(E) $, then the common value,  denoted by $\textmd{dim}_M(E)$,  is the Minkowski dimension of $E$.  Recall also that for a compact set $E\subset \mathbb{R}^n$, we have the relation $\textmd{dim}_H (E) \leq \underline{\textmd{dim}}_M (E)  \leq \overline{\textmd{dim}}_M (E)$, where $\textmd{dim}_H(E)$ is the Hausdorff dimension of $E$.

We will use $B_r(x)$ to denote the open ball of radius $r$ in $\mathbb{R}^n$ with center $x$ and write $B_r(0)$ as $B_r$ for short.   From now on, without loss of generality, we take the domain  $\Omega=B_2$. Firstly, we derive a local estimate of a singular positive solution $u$ near its singular set $\Lambda$ of \eqref{H-01}. 

\begin{theorem}\label{Thm01}
Suppose that $1 \leq m < n/2$ and $m$ is an integer. Let $\Lambda\subset B_{1/2}$ be a compact set with the upper Minkowski dimension $\overline{\textmd{dim}}_M(\Lambda)$ (not necessarily an integer),  $\overline{\textmd{dim}}_M(\Lambda) < \frac{n-2m}{2}$,  or be a smooth $k$-dimensional closed manifold with $k\leq \frac{n-2m}{2}$.  Let $u\in C^{2m}(B_2 \backslash \Lambda)$ be a positive solution of 
\begin{equation}\label{HOE} 
(-\Delta)^m u = u^{\frac{n+2m}{n-2m}} ~~~~~~  \textmd{in} ~B_2 \backslash \Lambda. 
\end{equation} 
Suppose
\begin{equation}\label{HOE02}  
(-\Delta)^s u \geq 0 ~~~~~~  \textmd{in} ~ B_2 \backslash \Lambda,~~ s=1, \dots, m-1.  
\end{equation} 
Then there exists a constant  $C > 0$ such that
\begin{equation}\label{Est01} 
u(x) \leq C  [d(x,  \Lambda)]^{-\frac{n-2m}{2}} 
\end{equation} 
for all $x\in B_1\backslash \Lambda$, where $d(x,  \Lambda)$ is the distance  between $x$ and $\Lambda$. 
\end{theorem}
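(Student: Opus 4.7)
The plan is a blow-up argument by contradiction, extending the strategy of Jin--Xiong \cite{JX19} from the isolated-singularity case by exploiting the smallness of the higher-dimensional singular set $\Lambda$. Suppose \eqref{Est01} fails; then there exists a sequence $x_i \in B_1 \setminus \Lambda$ with $u(x_i)^{2/(n-2m)}\, d(x_i,\Lambda) \to \infty$. Applying the doubling lemma of Polacik--Quittner--Souplet to $M(x) := u(x)^{2/(n-2m)}$ on the domain $B_{3/2} \setminus \Lambda$ with ``singular'' set $\Lambda \cup \partial B_{3/2}$ yields an improved sequence $y_i$ that still satisfies $M(y_i)\, d(y_i,\Lambda) \to \infty$ together with the local control
\[
u(z) \leq 2^{(n-2m)/2}\, u(y_i) \qquad \text{for every } z \in B_{k_i / M(y_i)}(y_i),
\]
for some $k_i \to \infty$ with $k_i / M(y_i) \leq \tfrac{1}{2} d(y_i, \Lambda)$.

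Setting $\la_i := u(y_i)^{-2/(n-2m)}$, the rescaled functions $v_i(y) := \la_i^{(n-2m)/2}\, u(y_i + \la_i y)$ satisfy $v_i(0)=1$, $0 < v_i \leq 2^{(n-2m)/2}$ on $B_{k_i}$, and continue to solve \eqref{HOE}--\eqref{HOE02} on the growing ball $B_{k_i}$. To pass to the limit in $C^{2m}_{\loc}(\R^n)$ I would need uniform bounds on the intermediate Laplacians $(-\Delta)^s v_i$ for $1 \leq s \leq m-1$; these follow from the superharmonicity assumption \eqref{HOE02} via a Green's representation on a slightly smaller ball, the polyharmonic analogue of the bound used in \cite{JX19}, which fits the local integral equation viewpoint emphasized by the paper. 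The limit $v$ is then a nonnegative entire $C^{2m}$ solution of \eqref{HOE}--\eqref{HOE02} on $\R^n$ with $v(0)=1$, hence by the Liouville-type classification of positive solutions under the sign conditions (Lin \cite{Lin98} for $m=2$, and its extensions to general $m$), $v$ must be a standard bubble. In particular $\int_{\R^n} v^{2n/(n-2m)}\, dy = S > 0$.

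It remains to extract a contradiction from the dimension hypothesis on $\Lambda$. Since $u \in C^{2m}(B_2 \setminus \Lambda)$, necessarily $d(y_i,\Lambda) \to 0$, so along a subsequence $y_i \to y_\infty \in \Lambda$, and unwinding the rescaling produces
\[
\int_{B_{R\la_i}(y_i)} u^{2n/(n-2m)}\, dx \ \longrightarrow\ \int_{B_R} v^{2n/(n-2m)}\, dy \ \to\ S \qquad \text{as } R \to \infty,
\]
so that a definite amount of critical energy accumulates near $y_\infty$. The heart of the theorem is to show this is incompatible with the smallness of $\Lambda$. My attempt would be a Pohozaev-type identity applied on $B_{3/2}$ minus a shrinking tubular neighborhood $\{x : d(x,\Lambda) < r\}$: the volume estimate $|\{d(\cdot,\Lambda) < r\}| \lesssim r^{n-\lambda}$, valid for any $\lambda > \overline{\textmd{dim}}_M(\Lambda)$, combined with the strict gap $\lambda < (n-2m)/2$, should force the boundary contributions on $\{d(\cdot,\Lambda) = r\}$ to vanish as $r \to 0^+$ and yield a uniform a priori bound on $\int u^{2n/(n-2m)}$ over any tube about $\Lambda$, in direct contradiction with the bubble concentration above. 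The smooth $k$-manifold case with $k \leq (n-2m)/2$ is the delicate borderline and should be handled in the same spirit, using the explicit tubular neighborhood structure in place of the Minkowski covering; matching exponents at the endpoint $k = (n-2m)/2$ is what I expect to be the main technical obstacle.
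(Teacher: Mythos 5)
Your blow-up setup is sound and close in spirit to the paper's, modulo replacing the paper's cut-off-maximum selection with the doubling lemma: rescaling at a bad sequence yields bounded rescaled solutions on growing balls, the sign conditions pass to the limit and give positivity of the limit $v$ via the strong maximum principle, and the Lin/Wei--Xu classification identifies $v$ as a bubble. The gap is the proposed contradiction mechanism, which does not work. At the critical exponent $p=\frac{n+2m}{n-2m}$ the Pohozaev identity is degenerate: since $p+1=\frac{2n}{n-2m}$, the volume term carries the coefficient $\frac{n-2m}{2}-\frac{n}{p+1}=0$, so the identity reduces to a balance of boundary integrals and yields no control whatsoever on $\int u^{2n/(n-2m)}$. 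More fundamentally, the critical-exponent energy is \emph{not} finite near $\Lambda$ under the theorem's hypotheses: the sharp rate $u\sim d(x,\Lambda)^{-(n-2m)/2}$ that the theorem asserts (and that Fowler-type solutions attain) gives $u^{2n/(n-2m)}\sim d(x,\Lambda)^{-n}$, and $\int_{\{d(\cdot,\Lambda)<r_0\}} d(x,\Lambda)^{-n}\,dx$ diverges for $\Lambda$ of any dimension $k\geq 0$, even an isolated point. So there is no absolute-continuity argument driving $\int_{B_{R\lambda_i}(y_i)} u^{2n/(n-2m)}\,dx$ to zero and no tube bound to collide with the $O(1)$ concentration you observe. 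The integrability the paper does establish, in Proposition \ref{P201}, is $u^{(n+2m)/(n-2m)}\in L^1(B_2)$; that integral rescales like $\lambda_i^{(n-2m)/2}\int_{B_R} v_i^{(n+2m)/(n-2m)}\to 0$, which is consistent with bubbling and gives nothing.

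The paper extracts the contradiction by a different mechanism. After converting \eqref{HOE} into a local integral equation $u(x)=c_{n,m}\int_{B_\tau(x_0)}|x-y|^{2m-n}u^p\,dy+h_1(x)$ with $h_1>0$ (Theorem \ref{T-Lo094}, which is where the sign conditions and the positivity of the Navier Green's kernels are used), it runs moving spheres at the blow-up scale: for each fixed $\lambda_0>0$, the rescaled solutions $w_j$ are shown to satisfy $(w_j)_\lambda\leq w_j$ outside $B_\lambda$ for all $\lambda\leq\lambda_0$ and all large $j$. In the limit this gives $w_\lambda\leq w$ for every $\lambda>0$, which by Li's lemma forces $w$ to be constant, contradicting that $w$ is a nontrivial bubble with $w(0)=1$. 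The strict lower bound $u\geq c_1>0$ off $\Lambda$ supplies the positive exterior contribution that starts and propagates the sphere; the dimension hypothesis on $\Lambda$ enters only through $\mathcal{L}^n(\Lambda)=0$ and the construction of the integral representation. To repair your proof, replace the Pohozaev/energy step with this Kelvin/moving-sphere comparison, or move entirely to the integral formulation as the paper does.
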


Assume that $\Lambda \subset B_{1/2}$ is a smooth $k$-dimensional  closed manifold with $k\leq \frac{n-2m}{2}$. Let $N$ be a tubular neighborhood of $\Lambda$ such  that any point of $N$ can be uniquely expressed as the sum $x+v$ where $x\in \Lambda$ and $v\in (T_x\Lambda)^\bot$, the orthogonal complement of the tangent space of $\Lambda$ at $x$. Denote $\Pi$ the orthogonal projection of $N$ onto $\Lambda$. For small $r>0$ and $z\in \Lambda$, 
\begin{equation}\label{Pi}
\Pi_r^{-1}(z) = \{y \in N ~ | ~ \Pi(y)=z, ~~ |y-z|=r \}. 
\end{equation} 
We have the following asymptotic symmetry of  solutions near the singular set $\Lambda$.  

\begin{theorem}\label{Thm02}
Suppose $1 \leq m < n/2$ and $m$ is an integer. Let  $u\in C^{2m}(B_2 \backslash \Lambda)$ be a positive solution of \eqref{HOE}. Suppose that \eqref{HOE02} holds,  and $N$, $\Lambda$ and $\Pi$ are described as above. Then,   for $x, x^\prime \in \Pi_r^{-1}(z)$, we have
\begin{equation}\label{Asy01}
u(x)=u(x^\prime) ( 1 + O(r))~~~~~ \textmd{as}~ r \to 0^+, 
\end{equation}
where $O(r)$ is uniform for all $z\in \Lambda$. 
\end{theorem}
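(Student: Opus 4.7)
The plan is to reduce Theorem~\ref{Thm02} to a moving plane (or moving sphere) argument on small scales near $\Lambda$, with the a priori estimate \eqref{Est01} from Theorem~\ref{Thm01} as the driving input. First, I would fix $z\in\Lambda$ and, after a rigid motion, assume $z=0$ with $T_0\Lambda$ spanned by the first $k$ coordinate directions and $(T_0\Lambda)^\perp$ by the remaining $n-k$. Since $\Lambda$ is smooth and closed, the second fundamental form and the tubular neighborhood radius are uniformly controlled, so all estimates can be made uniform in $z\in\Lambda$.

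Second, I would convert \eqref{HOE} into a local integral equation. Using the sign condition \eqref{HOE02} together with the polyharmonic Green representation on a ball $B_R\subset B_2$ with $R$ small, one iterates the Green's function of $-\Delta$ $m$ times; the sign assumptions $(-\Delta)^s u\geq 0$ guarantee that all boundary contributions are nonnegative and produce, for $x\in B_{R/2}\setminus\Lambda$,
\begin{equation*}
u(x)=\int_{B_R\setminus\Lambda} G_m(x,y)\, u(y)^{(n+2m)/(n-2m)}\, dy + h(x),
\end{equation*}
where $G_m$ is a positive kernel and $h$ is a remainder that is smooth across $\Lambda$. Combined with \eqref{Est01}, the hypothesis $k\le(n-2m)/2$ ensures that the nonlinearity $u^{(n+2m)/(n-2m)}$ is integrable against $G_m$ near $\Lambda$, so this representation is legitimate and can be differentiated.

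Third, given two points $x,x'\in\Pi_r^{-1}(z)$ with $|x|=|x'|=r$, I would apply the moving plane method across the hyperplane $H$ bisecting the segment $xx'$ and passing through $z=0$. Because $\Lambda$ is tangent to the $k$-plane $T_0\Lambda\subset H$ at $0$, the reflection $\Lambda_H$ of $\Lambda\cap B_r$ across $H$ agrees with $\Lambda\cap B_r$ up to an $O(r^2)$ perturbation on the scale $r$. Setting $u_H(y)=u(R_H y)$ where $R_H$ is the reflection across $H$, one compares $u$ and $u_H$ via the integral equation. On the half-ball of radius comparable to $r$ on one side of $H$, the symmetric part of the kernel $G_m$ cancels and the nonsymmetric part, together with the mismatch between $\Lambda$ and $\Lambda_H$, contributes an error of relative size $O(r)$. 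A standard iteration, using the a priori bound \eqref{Est01} to absorb the nonlinear term into a small perturbation of the identity operator, yields $u(x)\le u_H(x)(1+O(r))=u(x')(1+O(r))$. Exchanging the roles of $x$ and $x'$ gives the reverse inequality and hence \eqref{Asy01}.

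The main obstacle is the third step: quantifying how the deviation of $\Lambda$ from its tangent plane at $z$ propagates through the moving plane comparison and showing that it produces a multiplicative error $1+O(r)$ rather than something cruder. This is where the dimension restriction $k\le(n-2m)/2$ is genuinely used, because the singular integrals controlling the defect of symmetry sit exactly at the borderline of integrability, and only the sharp pointwise bound \eqref{Est01} provided by Theorem~\ref{Thm01} keeps the error linear in $r$. The uniformity of the $O(r)$ in $z$ follows from the compactness of $\Lambda$ and the uniform geometric bounds recalled in the first step.
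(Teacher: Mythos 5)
Your overall scaffolding (a priori bound from Theorem~\ref{Thm01}, passage to a local integral representation, a reflection-type comparison on scale $r$ near $z$, and a covering/compactness argument for uniformity in $z$) matches the paper's outline at the coarse level. But your step~3 diverges from the paper in an essential way, and as stated it has a gap.

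The paper does not run a moving \emph{plane} argument across a hyperplane through $z$; it runs a moving \emph{sphere} argument (Section~\ref{S4000}, proof of Theorem~\ref{IEthm02}), and this choice is not cosmetic. A hyperplane $H$ bisecting the chord $xx'$ and passing through $z$ contains $(T_z\Lambda)^\perp$'s orthogonal complement in $H$, and hence $H$ meets $\Lambda$ at $z$ (and, to $O(r^2)$, along $T_z\Lambda$). Reflecting across $H$ produces two \emph{different} singular sets $\Lambda$ and $R_H\Lambda$ that neither contain one another, and $u$ and $u_H$ blow up on different sets; the claim that the mismatch ``contributes an error of relative size $O(r)$'' which can be ``absorbed into a small perturbation of the identity operator'' is not a standard iteration and is not justified. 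Indeed, near $\Lambda\,\triangle\, R_H\Lambda$ both $u$ and $u_H$ are of size $d^{-(n-2m)/2}$, and the symmetric difference has positive measure at scale $r$, so the defect is not a priori small in any norm relevant to the nonlinear kernel. Also, moving planes (or spheres) deliver \emph{one-sided exact} inequalities, not multiplicative $(1+O(r))$ errors; the multiplicative $O(r)$ cannot come out of a single plane comparison. The paper avoids all of this by using spheres $\partial B_\lambda(x_3)$ centered at points $x_3\notin\Lambda$ with $0<\lambda<\mathrm{dist}(x_3,\Lambda)$, so the sphere never touches $\Lambda$ and the compared region $\{|y-x_3|\ge\lambda\}$ contains both $\Lambda$ and its Kelvin image cleanly. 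The argument first shows $\bar\lambda(x)=\mathrm{dist}(x,\Sigma)$ for all $x$ near $\Sigma$ (this uses the upper bound \eqref{In-Es01} to control $u_{x,\lambda}$ away from $x$, plus quantitative kernel estimates to push past any putative $\bar\lambda<\mathrm{dist}(x,\Sigma)$), and only then extracts the $1+O(r)$ factor by applying the \emph{exact} inequality $u_{x_3,\lambda}\le u$ with a single, explicit choice $x_3=x_1+\frac{\varepsilon(e_1-e_2)}{4|e_1-e_2|}$ and $\lambda=\sqrt{\tfrac{\varepsilon}{4}(|e_1-e_2|+\tfrac{\varepsilon}{4})}$: the conformal weight $(\lambda/|x_2-x_3|)^{n-2m}$ evaluates to $(1+4|e_1-e_2|/\varepsilon)^{-(n-2m)/2}=1+O(r)$, giving the conclusion algebraically rather than perturbatively.

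A secondary point: you locate the role of $k\le(n-2m)/2$ in ``singular integrals controlling the defect of symmetry sitting at the borderline of integrability.'' In the paper this dimension restriction is used only to pass from the PDE \eqref{HOE} to the integral equation (Proposition~\ref{P201}, Theorem~\ref{T-Lo094}, via $u^{(n+2m)/(n-2m)}\in L^1_{\mathrm{loc}}$); the integral-level asymptotic symmetry result, Theorem~\ref{IEthm02}, is proved for any smooth $k$-dimensional closed $\Sigma$ with $k\le n-1$ and only requires $\mathcal{L}^n(\Sigma)=0$. So the dimension bound is not what keeps the defect-of-symmetry error linear in $r$ — the paper gets the $O(r)$ exactly, not by a borderline cancellation.
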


Remark that  when  $m=2$,  the positivity of the scalar curvature of the metric $u^{\frac{4}{n-4}} |dx|^2$ implies that $-\Delta u > 0 $.  We also mention that  Gursky-Malchiodi \cite{GM}  studied the positivity of the Paneitz operator and its Green's function under the assumption that the scalar curvature is  positive.    
Since we do not use any special structure of the open ball, $B_2$ can be replaced by general domains containing $\overline{B_{1/2}}$. Also, both of the above theorems apply to $\Lambda \subset B_2$ being compact. When $\Lambda$ is a single point, Theorems \ref{Thm01} and \ref{Thm02} have  been proved in Jin-Xiong \cite{JX19}.

Now, we give a global symmetry result when $\Omega$ is the whole Euclidean space and $\Lambda$ is a lower dimensional hyperplane.  Let $\mathbb{R}^k$ be a $k$-dimensional subspace of $\mathbb{R}^n$ with $0\leq k \leq n-1$ being an integer,  where $\mathbb{R}^0$ denotes the origin $\{0\}$.  

\begin{theorem}\label{Thm03}
Suppose that $1 \leq m < n/2$ and $m$ is an integer. Let $0\leq k \leq \frac{n-2m}{2}$ and $u \in C^{2m} (\mathbb{R}^n \backslash \mathbb{R}^k)$ be a nonnegative  solution of 
\begin{equation}\label{Whole}
(-\Delta)^m u = u^{\frac{n+2m}{n-2m}} ~~~~~~ in ~ \mathbb{R}^n \backslash \mathbb{R}^k. 
\end{equation}
Suppose there exists $x_0\in \mathbb{R}^k$ such that $\limsup_{x\to x_0}u(x) =\infty$.  Then
$$
u(x^\prime, x^{\prime\prime}) = u(x^\prime, \tilde{x}^{\prime\prime}), 
$$
where $x^\prime \in \mathbb{R}^k$ and $x^{\prime\prime}, \tilde{x}^{\prime\prime} \in \mathbb{R}^{n-k}$ that $|x^{\prime\prime}| = |\tilde{x}^{\prime\prime}|$.  In particular, If $k=0$, i.e., $\mathbb{R}^k=\{0\}$,  and the origin is a non-removable singularity, then $u$ is radially symmetric and monotonically decreasing about the origin.  
\end{theorem}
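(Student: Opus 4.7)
My plan is to combine the a priori upper bound from Theorem~\ref{Thm01} with a moving-plane argument for the integral equation satisfied by $u$. I treat the sign condition \eqref{HOE02} as an implicit standing hypothesis (it is what is needed for Theorem~\ref{Thm01} and for the super-poly-harmonic lift below). First I would upgrade Theorem~\ref{Thm01} to a global estimate by scaling: applying it to the rescaled function $v_R(y):=R^{(n-2m)/2}u(Ry)$ on $B_2$ (whose singular set is contained in $\mathbb{R}^k\cap B_{1/2}$) and sending $R\to\infty$ yields
$$u(x)\le C\, d(x,\mathbb{R}^k)^{-(n-2m)/2}\qquad\text{for every }x\in\mathbb{R}^n\setminus\mathbb{R}^k,$$
with a universal constant $C=C(n,m,k)$. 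Using \eqref{HOE02} and this transverse decay to rule out nontrivial harmonic polynomial pieces at infinity, I would promote \eqref{Whole} to the Riesz-potential identity
$$u(x)=c_{n,m}\int_{\mathbb{R}^n}\frac{u(y)^{(n+2m)/(n-2m)}}{|x-y|^{n-2m}}\,dy,\qquad x\in\mathbb{R}^n\setminus\mathbb{R}^k,$$
by the standard inductive argument that realises each $(-\Delta)^s u$ with $s<m$ as a Riesz potential of the next lower one. The dimensional constraint $k\le(n-2m)/2$ is exactly what makes the integrand locally integrable in the $(n-k)$ transverse directions off $\mathbb{R}^k$.

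With the integral equation in hand I would run the Chen-Li-Ou moving-plane scheme in every direction $e\perp\mathbb{R}^k$. Write $T_\lambda:=\{x\cdot e=\lambda\}$, $\Sigma_\lambda:=\{x\cdot e<\lambda\}$, $x^\lambda$ for the reflection of $x$ across $T_\lambda$, and $u_\lambda(x):=u(x^\lambda)$. Splitting the integral equation gives the identity
$$u(x)-u_\lambda(x)=\int_{\Sigma_\lambda}\!\Bigl(\tfrac{c_{n,m}}{|x-y|^{n-2m}}-\tfrac{c_{n,m}}{|x-y^\lambda|^{n-2m}}\Bigr)\bigl[u(y)^p-u_\lambda(y)^p\bigr]\,dy,$$
with $p=(n+2m)/(n-2m)$ and a nonnegative kernel on $\Sigma_\lambda\times\Sigma_\lambda$. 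Combining the mean-value bound $u^p-u_\lambda^p\le p\,u^{p-1}(u-u_\lambda)^+$ on $A_\lambda^-:=\{y\in\Sigma_\lambda:u(y)>u_\lambda(y)\}$ with the Hardy-Littlewood-Sobolev inequality, and exploiting the transverse decay $u(y)\le C|y\cdot e|^{-(n-2m)/2}$ on $\Sigma_\lambda$, I obtain $\|(u-u_\lambda)^+\|_{L^{2n/(n-2m)}(\Sigma_\lambda)}\le\eta(\lambda)\,\|(u-u_\lambda)^+\|_{L^{2n/(n-2m)}(\Sigma_\lambda)}$ with $\eta(\lambda)\to 0$ as $\lambda\to-\infty$, starting the moving-plane method. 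Sliding $\lambda$ upward and setting $\lambda_0:=\sup\{\lambda\le 0:u\le u_\mu\text{ on }\Sigma_\mu\text{ for all }\mu\le\lambda\}$, a narrow-region argument plus continuity forces $\lambda_0=0$: if $\lambda_0<0$, the attained equality $u\equiv u_{\lambda_0}$ on $\Sigma_{\lambda_0}$ would force the singular set $\mathbb{R}^k\subset T_0$ to be invariant under reflection across $T_{\lambda_0}$, but that reflection sends $\mathbb{R}^k$ to the parallel translate $\mathbb{R}^k+2\lambda_0 e\neq\mathbb{R}^k$, contradicting the assumed non-removable singularity at $x_0\in\mathbb{R}^k$. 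Consequently $u$ is symmetric across $\{x\cdot e=0\}$; letting $e$ range over every unit vector orthogonal to $\mathbb{R}^k$ recovers invariance under the full rotation group fixing $\mathbb{R}^k$, i.e.\ $u(x',x'')=u(x',\tilde x'')$ whenever $|x''|=|\tilde x''|$. For $k=0$ this is radial symmetry, and the strict form of the moving-plane comparison before the critical plane is reached gives the claimed monotone decay in $|x|$.

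The main obstacle I anticipate is the borderline dimension $k=(n-2m)/2$: there $u^{(n+2m)/(n-2m)}$ sits exactly at the edge of local integrability near $\mathbb{R}^k$, so both the Riesz-potential representation and the moving-plane identity above have to be derived by first working on $\mathbb{R}^n\setminus N_\varepsilon(\mathbb{R}^k)$ for a shrinking tubular neighborhood and then passing to the limit $\varepsilon\to 0$, verifying that all boundary contributions vanish using the pointwise bound from Step~1. A more subtle secondary point is the initialisation of the moving plane itself: since $u$ does not decay along $\mathbb{R}^k$, only transverse decay is available, which is precisely why the method can be started only in the directions $e\perp\mathbb{R}^k$, and why the proof delivers rotational, rather than full, symmetry of $u$.
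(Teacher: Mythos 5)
Your proposal has a genuine gap at the very first step: you add the sign conditions \eqref{HOE02} as a ``standing hypothesis,'' but Theorem~\ref{Thm03} does not assume them, and the paper's proof does not need them. The paper obtains the Riesz-potential identity in Theorem~\ref{T-Glo} by a direct comparison: it sets $v(x)=c_{n,m}\int_{\mathbb{R}^n}|x-y|^{2m-n}u(y)^{(n+2m)/(n-2m)}\,dy$, shows via Proposition~\ref{Glo-P01} that $u$ is a distributional solution of $(-\Delta)^m u=u^{(n+2m)/(n-2m)}$ in all of $\mathbb{R}^n$ (this uses only the Minkowski-dimension condition $k\le(n-2m)/2$, not any sign condition), shows $(-\Delta)^m(u-v)=0$ distributionally, promotes $w:=u-v$ to a classical polyharmonic function, and then kills $w$ with a Liouville theorem after checking $w\in L_0(\mathbb{R}^n)$ (Lemmas~\ref{Glo-P02} and~\ref{VVV}). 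The ``super-poly-harmonic lift'' route you describe is the Wei--Xu/Chen--Li--Ou scheme, which the paper explicitly states it does \emph{not} follow precisely because that scheme relies on the sign conditions; adopting it here would require you to first \emph{prove} the inequalities $(-\Delta)^s u\ge 0$ rather than assume them, or else you have proved a strictly weaker statement. Relatedly, your use of Theorem~\ref{Thm01} both smuggles in \eqref{HOE02} and overstates what it gives: the constant in \eqref{Est01} is produced by a compactness/contradiction argument and depends on $u$, so the scaling $v_R(y)=R^{(n-2m)/2}u(Ry)$ does not obviously yield a single universal constant $C(n,m,k)$ uniform in $R$; the paper never needs such a global pointwise bound before the symmetry step.

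The second divergence is methodological rather than a flat error, but it does hide a technical issue. The paper proves the symmetry via moving \emph{spheres} (Theorem~\ref{IEthm03}): for each $x\in\mathbb{R}^{n-k}\setminus\{0\}$ it establishes $u_{x,\lambda}\le u$ on $\{|y-x|\ge\lambda\}$ for all $0<\lambda<|x|$, then recovers reflection inequalities across hyperplanes orthogonal to $\mathbb{R}^k$ by sending the sphere center to infinity. You instead run the HLS moving-plane scheme directly. The difficulty is the initialization estimate $\|(u-u_\lambda)^+\|_{L^{2n/(n-2m)}(\Sigma_\lambda)}\le\eta(\lambda)\|(u-u_\lambda)^+\|_{L^{2n/(n-2m)}(\Sigma_\lambda)}$: for this to be meaningful you need $(u-u_\lambda)^+$ to lie in that Lebesgue space on the full half-space $\Sigma_\lambda$, which stretches to infinity \emph{along} $\mathbb{R}^k$ where $u$ has no decay (in fact $\int_{B_R}u^{(n+2m)/(n-2m)}$ grows like $R^{(n-2m)/2}$ by Lemma~\ref{Glo-P02}). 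You note only that only transverse decay is available, but do not explain how the required global $L^{p}$ integrability over $\Sigma_\lambda$ is to be recovered. The moving-sphere argument sidesteps this because the comparison is anchored at a finite center and the key lower bound $u(y)\ge c|y|^{2m-n}$ at infinity (from Fatou applied to the integral equation) suffices to close the annular-region estimates without any global $L^p$ control on a half-space.

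In short: (i) removing the sign-condition assumption is essential, and requires replacing your inductive Riesz-potential lift by the paper's $u-v$/Liouville argument; (ii) if you insist on moving planes you must address the non-integrability of $(u-u_\lambda)^+$ on half-spaces, or switch to moving spheres as the paper does.
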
 

When the singular set $\mathbb{R}^k$ is removable, i.e., $u$ can be extended as a positive smooth solution in the whole $\mathbb{R}^n$, the classification of positive solutions of \eqref{Whole} was obtained by Caffarelli-Gidas-Spruck \cite{CGS} for  $m=1$, by Lin \cite{Lin98} for $m=2$ and  by Wei-Xu \cite{Wei-Xu} for $m\geq 3$. We may also see Chen-Li-Ou\cite{CLO06} and Y.Y. Li \cite{Li04} for the classification of positive smooth solutions of  conformally invariant integral equations.  When $\mathbb{R}^k=\{0\}$ and \{0\} is a non-removable singularity, the radial symmetry of positive solutions of \eqref{Whole} was  proved by Caffarelli-Gidas-Spruck \cite{CGS} for  $m=1$ and by Lin \cite{Lin98} for $m=2$ via the method of moving planes.  Our proof is different from the ones in \cite{CGS,Lin98} and in fact,  it is easy  to  derive the following global estimate from our proof.

\begin{corollary}\label{Cor01}
Suppose that $1 \leq m < n/2$ and $m$ is an integer. Let  $u \in C^{2m} (\mathbb{R}^n \backslash \{0\})$ be a nonnegative  solution of  
\begin{equation}\label{Whole00}
(-\Delta)^m u = u^{\frac{n+2m}{n-2m}} ~~~~~~ in ~ \mathbb{R}^n \backslash \{0\}. 
\end{equation}
If the origin is a non-removable singularity, then 
there exist  two  positive constants $C_1=C_1(n, m)$  and $C_2=C_2(n, m , u)$  such that for all $x\in \mathbb{R}^n \backslash \{0\}$, 
\begin{equation}\label{dfgh}
C_2 |x|^{-\frac{n-2m}{2}} \leq u(x) \leq C_1 |x|^{-\frac{n-2m}{2}}. 
\end{equation}
\end{corollary}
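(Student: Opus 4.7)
The strategy is to combine Theorem \ref{Thm03} (radial symmetry) with a phase-plane analysis of the associated Emden--Fowler ODE.

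First, by Theorem \ref{Thm03} applied with $k = 0$ (non-removability of the origin provides the required $\limsup_{x \to 0} u(x) = \infty$), $u$ is radially symmetric about the origin and monotonically decreasing. Write $u(x) = u(r)$ with $r = |x|$, and introduce the Emden--Fowler variables $v(t) := e^{(n-2m)t/2} u(e^t)$, $t = \log r \in \R$. Because the exponent $(n+2m)/(n-2m)$ is critical, the radial form of \eqref{Whole00} transforms into an autonomous ODE of order $2m$ for $v$ with coefficients depending only on $n$ and $m$; translation in $t$ corresponds exactly to the scaling $u \mapsto \la^{(n-2m)/2} u(\la\,\cdot\,)$ of the equation.

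For the upper bound, the autonomous ODE admits a Pohozaev/Hamiltonian-type conserved quantity. Since $u$ is positive on $(0, \infty)$, the corresponding trajectory $v(t)$ is positive for all $t \in \R$; positivity combined with the conservation law confines the trajectory to a bounded invariant region of phase space, whose $v$-projection admits a universal upper bound $V_* = V_*(n, m)$. This yields $u(x) \leq V_* |x|^{-(n-2m)/2}$, proving the upper estimate with $C_1 = C_1(n, m)$. For $m = 1$ this is the classical Fowler/Delaunay picture in a $2$-dimensional phase plane with explicit Hamiltonian $H(v, v') = (v')^2/2 - \alpha^2 v^2/2 + v^{p+1}/(p+1)$, $\alpha = (n-2)/2$; for $m \geq 2$ one works in a $2m$-dimensional phase space.

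For the lower bound, non-removability of the origin translates to $v(t) \not\to 0$ as $t \to -\infty$: otherwise $u(r) = o(r^{-(n-2m)/2})$ as $r \to 0^+$, and standard removable-singularity results for the polyharmonic equation would force $u$ to extend smoothly across the origin. Combined with the bounded trajectory and the autonomous structure, an $\omega$-limit or Fowler/Delaunay classification argument then gives $\inf_t v(t) \geq C_2(n, m, u) > 0$, whence $u(x) \geq C_2 |x|^{-(n-2m)/2}$. The main obstacle is the phase-space analysis in the upper-bound step, especially for $m \geq 2$, where one must verify boundedness of the invariant region in the $2m$-dimensional phase space and extract a universal $V_*(n, m)$; this requires exploiting a higher-order Pohozaev/Hamiltonian identity, which for the critical-exponent polyharmonic equation is well-documented in the literature on radial Fowler/Delaunay solutions. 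The lower-bound argument is then routine once the universal upper bound is in hand.
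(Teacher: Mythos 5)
Your plan diverges fundamentally from the paper's argument, and while it is a plausible blueprint, it contains real gaps that would take substantial new work to close.

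The paper proves the corollary via the \emph{integral} representation it has built: combining Theorems~\ref{T-Glo} and \ref{IEthm03}, one gets radial monotonicity \emph{and} that $u(x)=c_{n,m}\int_{\R^n}u(y)^{\frac{n+2m}{n-2m}}|x-y|^{2m-n}\,dy$. The upper bound is then one line: restrict the integral to $B_r$, use monotonicity to bound $u(y)\ge u(r)$ there, and read off $u(r\theta)\ge C_1 r^{2m}u(r)^{\frac{n+2m}{n-2m}}$, giving $u(r)\le C_1(n,m)\,r^{-(n-2m)/2}$. The lower bound follows because the integral representation immediately forces the sign conditions $(-\Delta)^s u\ge0$ (positivity of the Riesz kernels $|x-y|^{2m-2s-n}$), after which Theorem~1.1 of Jin--Xiong~\cite{JX19} applies verbatim. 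Your proposal instead passes to Emden--Fowler variables and argues by phase-space confinement. Even granting Theorem~\ref{Thm03} (which the paper proves via Theorems~\ref{T-Glo} and \ref{IEthm03} anyway), the confinement step is exactly where the approach is incomplete: for $m\ge2$ the phase space is $2m$-dimensional and a single Pohozaev/Hamiltonian first integral does \emph{not} confine a positive trajectory to a bounded set --- its level sets are $(2m-1)$-dimensional and can be unbounded. Frank--K\"onig~\cite{Fr-K19} carry out a delicate order-$4$ ODE analysis to obtain this for $m=2$, and there is no analogue in the literature for general $m\ge3$; asserting that the boundedness ``is well-documented'' is not accurate beyond $m\le2$. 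Similarly, the lower-bound step leans on ``standard removable-singularity results for the polyharmonic equation'' given only $u(r)=o(r^{-(n-2m)/2})$, and on a Fowler/Delaunay classification of $\omega$-limits; neither is established for general $m$. The paper's integral-equation route sidesteps all of this, which is precisely why it is used. Your plan is sound in spirit (and recovers the classical $m=1$ picture and the Frank--K\"onig $m=2$ result), but as written it does not constitute a proof for general $m$, whereas the paper's argument does.
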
  

For when $m=2$, Corollary \ref{Cor01} was proved  by Frank-K\"{o}nig \cite{Fr-K19} by ODE analysis, which plays an important  role in their  classification of  all the singular positive solutions in $\mathbb{R}^n \backslash \{0\}$.   

It is well-known that the equation \eqref{HOE} is conformally invariant in the following sense. If $u$ is a solution of \eqref{HOE}, then its Kelvin transform
\begin{equation}\label{Kel}
u_{x,\lambda} (\xi) = \left( \frac{\lambda}{|\xi-x|} \right)^{n-2m} u \left( x + \frac{\lambda^2 (\xi-x)}{|\xi-x|^2}\right)
\end{equation}
is also a solution of \eqref{HOE} in the corresponding region. Recall also that in the classification of smooth positive solutions of \eqref{HOE}  in $\mathbb{R}^n$ by Lin  \cite{Lin98} and Wei-Xu \cite{Wei-Xu}, one crucial ingredient is that every entire smooth solution of \eqref{HOE} satisfies the sign conditions \eqref{HOE02} in $\mathbb{R}^n$.   This indicates that the sign conditions \eqref{HOE02}  are kept under the Kelvin transform \eqref{Kel} for entire solutions.   However, for our local equation \eqref{HOE}, the  sign conditions \eqref{HOE02} may change under the  Kelvin transform \eqref{Kel}.  
This makes it seem very difficult to prove Theorems \ref{Thm01} and \ref{Thm02}  directly using the Kelvin transform and the moving plane method  to the higher order equation \eqref{HOE},  which is the approach of Chen-Lin \cite{Chen-Lin95}  to obtain these results when $m=1$. 

We overcome this difficulty along the similar idea developed in Jin-Xiong \cite{JX19} when $\Lambda=\{0\}$, which is inspired by Jin-Li-Xiong \cite{JLX17}.   More specifically, we  rewrite the differential equation \eqref{HOE} into the local integral equation \eqref{Int} below and study the singular solutions of this integral equation.  The aim of this paper is to further develop the idea of Jin-Xiong\cite{JX19} to study the local behavior of  positive solutions to  the differential equation \eqref{HOE} with a general singular set $\Lambda$.  Moreover, we also apply this idea to study the symmetry of global singular solutions of \eqref{Whole}.

Suppose the dimension $n \geq 1$, $0< \sigma < \frac{n}{2}$ is a real number, and $\Sigma$ is a closed set in  $\mathbb{R}^n$.  We consider the local integral equation 
\begin{equation}\label{Int}
u(x) =\int_{B_2} \frac{ u(y)^{\frac{n+2\sigma}{n-2\sigma}} }{|x-y|^{n-2\sigma}} + h(x),~~~ u >0, ~~~~~ x \in B_2 \backslash \Sigma, 
\end{equation}
where $u \in L^{\frac{n+2\sigma}{n-2\sigma}}(B_2) \cap C(B_2 \backslash \Sigma)$ and $ h\in C^1(B_2)$ is a positive function.  Under the assumptions in Theorem \ref{Thm01}, one can show $u\in L_{\textmd{loc}}^{\frac{n+2\sigma}{n-2\sigma}}(B_2) $ and can rewrite the equation \eqref{HOE}  locally  into the integral equation \eqref{Int} after some scaling (see Theorem \ref{T-Lo094} in Section  2).

Next we state the corresponding results for singular solutions of the integral equation \eqref{Int}.  Denote $\mathcal{L}^n$  the $n$-dimensional Lebesgue measure on $\mathbb{R}^n$.  

\begin{theorem}\label{IEthm01}
Suppose $n \geq 1$, $0< \sigma < n/2$, and $\Sigma$ is a closed set in  $\mathbb{R}^n$ with $\mathcal{L}^n (\Sigma)=0$. Let $h\in C^1(B_2)$ be a positive function and  $u \in L^{\frac{n+2\sigma}{n-2\sigma}}(B_2) \cap C(B_2 \backslash \Sigma)$  be a positive solution of \eqref{Int}. Then there exists a constant $C>0$ such that 
\begin{equation}\label{In-Es01}
u(x) \leq C [d(x, \Sigma)]^{-\frac{n-2\sigma}{2}}
\end{equation}
for all $x\in B_1\backslash \Sigma$, where  $d(x,  \Sigma)$ is the distance  between $x$ and $\Sigma$.  
\end{theorem}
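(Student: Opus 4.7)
The plan is to argue by contradiction using a doubling-plus-blow-up scheme, adapting the strategy that Jin--Li--Xiong and Jin--Xiong used for isolated singularities to a general closed singular set $\Sigma$. Suppose the bound fails; then one may find $x_i \in B_1 \setminus \Sigma$ with $u(x_i)\, d(x_i, \Sigma)^{(n-2\sigma)/2} \to \infty$. Since $u$ is continuous on $B_2 \setminus \Sigma$ and $\Sigma$ is closed, a standard Pol\'{a}\v{c}ik--Quittner--Souplet doubling lemma refines this to a sequence $\bar{x}_i \in B_{3/2} \setminus \Sigma$ and scales $\lambda_i := u(\bar{x}_i)^{-2/(n-2\sigma)}$ with $\lambda_i / d(\bar{x}_i, \Sigma) \to 0$ (so that $\Sigma$ escapes to infinity in the rescaling) and the local non-degeneracy $u(x) \leq 2 u(\bar{x}_i)$ on $B_{R_i \lambda_i}(\bar{x}_i)$ for some $R_i \to \infty$.

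The rescaling $v_i(y) := \lambda_i^{(n-2\sigma)/2} u(\bar{x}_i + \lambda_i y)$ then satisfies $v_i(0) = 1$, $v_i \leq 2$ on $B_{R_i}(0)$, and (by the conformal invariance of the Riesz kernel $|y-z|^{-(n-2\sigma)}$) the rescaled integral equation
\begin{equation*}
v_i(y) = \int_{\Omega_i} \frac{v_i(z)^{(n+2\sigma)/(n-2\sigma)}}{|y-z|^{n-2\sigma}}\, dz + \lambda_i^{(n-2\sigma)/2} h(\bar{x}_i + \lambda_i y), \qquad \Omega_i := (B_2 - \bar{x}_i)/\lambda_i \nearrow \mathbb{R}^n.
\end{equation*}
The uniform local $L^\infty$ bound, inserted back into this equation, yields local H\"{o}lder equicontinuity of $\{v_i\}$, so a subsequence converges locally uniformly to some $v \geq 0$ with $v(0) = 1$. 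The $h$-contribution vanishes in the limit and $\Omega_i$ fills $\mathbb{R}^n$, so $v$ solves the global integral equation
\begin{equation*}
v(y) = \int_{\mathbb{R}^n} \frac{v(z)^{(n+2\sigma)/(n-2\sigma)}}{|y-z|^{n-2\sigma}}\, dz,
\end{equation*}
and the Chen--Li--Ou classification forces $v(y) = c\bigl(a/(a^2 + |y-y_0|^2)\bigr)^{(n-2\sigma)/2}$; in particular $C_* := \int_{\mathbb{R}^n} v^{2n/(n-2\sigma)}\, dy$ is a universal positive constant.

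The intended contradiction exploits the scale invariance of the critical integral $\int u^{2n/(n-2\sigma)}\, dx$: for each fixed large $R$,
\begin{equation*}
\int_{B_{R\lambda_i}(\bar{x}_i)} u^{2n/(n-2\sigma)}\, dx = \int_{B_R(0)} v_i^{2n/(n-2\sigma)}\, dy \longrightarrow \int_{B_R(0)} v^{2n/(n-2\sigma)}\, dy \geq C_*/2.
\end{equation*}
Since $R\lambda_i \to 0$ while $B_{R\lambda_i}(\bar{x}_i)$ sits inside $B_1 \cap \{d(\cdot, \Sigma) \geq d(\bar{x}_i, \Sigma)/2\}$, a mass of size $C_*/2$ thus concentrates on balls of arbitrarily small volume, contradicting the absolute continuity of the Lebesgue integral \emph{once} $u \in L^{2n/(n-2\sigma)}_{\mathrm{loc}}(B_2)$ is available. \textbf{The main obstacle} is precisely this bootstrap: a priori only $u \in L^{(n+2\sigma)/(n-2\sigma)}(B_2)$ is assumed, and the critical Hardy--Littlewood--Sobolev scaling makes direct iteration of the integral equation stall exactly at the target exponent $2n/(n-2\sigma)$. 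I would circumvent it with a Brezis--Kato-type truncation applied to the integral equation, writing $u = I_{2\sigma}(u^{(4\sigma)/(n-2\sigma)}\cdot u) + h$ and using that the potential $u^{(4\sigma)/(n-2\sigma)}$ has small $L^{n/(2\sigma)}$-type mass on sets of small measure (which follows, in a weak sense, from $u \in L^{(n+2\sigma)/(n-2\sigma)}$), so that a localized HLS gain closes on small balls and produces the required $L^{2n/(n-2\sigma)}_{\mathrm{loc}}$ regularity needed to contradict the concentration above.
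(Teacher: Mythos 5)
Your blow-up and limit-classification steps follow the paper's strategy in Section 3 (there the selection is the Schoen/Caffarelli--Gidas--Spruck device rather than the doubling lemma, but the outcome is the same). The divergence is in how you propose to reach a contradiction once the limit bubble is identified, and it is here that a genuine gap opens which your suggested repair does not close.

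The mass-concentration contradiction requires $u\in L^{\frac{2n}{n-2\sigma}}_{\mathrm{loc}}(B_2)$, but the hypothesis only gives $u\in L^{\frac{n+2\sigma}{n-2\sigma}}(B_2)$, and the Brezis--Kato bootstrap you sketch does not supply the missing integrability. Writing $u^{\frac{n+2\sigma}{n-2\sigma}}=Vu$ with $V=u^{\frac{4\sigma}{n-2\sigma}}$, the hypothesis gives $V\in L^{s_0}(B_2)$ with $s_0=\frac{n+2\sigma}{4\sigma}$, whereas the Riesz kernel $I_{2\sigma}$ is scale-invariant precisely for potentials in $L^{n/(2\sigma)}$. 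Since $0<\sigma<n/2$,
$$
s_0=\frac{n+2\sigma}{4\sigma}<\frac{2n}{4\sigma}=\frac{n}{2\sigma},
$$
so $V$ lies \emph{strictly below} the critical potential space. In particular $u\in L^{\frac{n+2\sigma}{n-2\sigma}}(B_2)$ does \emph{not} imply that $V$ is small in $L^{n/(2\sigma)}$ (or in weak $L^{n/(2\sigma)}$) on small sets --- that norm of $V$ may simply be infinite. A truncation $V=V\mathbf{1}_{\{V\le K\}}+V\mathbf{1}_{\{V>K\}}$ does not help either: the tail is controlled only in $L^{s_0}$, which is too weak for HLS to return anything at the exponent $\frac{2n}{n-2\sigma}$. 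So the proposed bootstrap stalls before it starts, and the absolute-continuity contradiction cannot be run.

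The paper avoids the issue altogether by not appealing to any extra regularity of $u$. After establishing $w_j\to w$ (the bubble), it applies the method of moving spheres directly to the rescaled sequence $w_j$: for each fixed $\lambda_0>0$ it shows $(w_j)_{\lambda}\le w_j$ on the rescaled domain for all $0<\lambda\le\lambda_0$ and all $j$ large (Claims 2 and 3 in Section 3). Passing to the limit gives $w_\lambda\le w$ for every $\lambda>0$, and Y.~Y.~Li's Liouville argument then forces $w\equiv\mathrm{const}$, contradicting the nontrivial bubble. The quantitative input driving the moving-sphere step is the pointwise lower bound $u\ge c_1>0$ on $B_2\setminus\Sigma$ --- which comes straight from \eqref{Int} --- together with the uniform local $L^\infty$ control on $w_j$ furnished by the selection; no integrability of $u$ beyond $L^{\frac{n+2\sigma}{n-2\sigma}}(B_2)$ is ever invoked. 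To repair your argument you would need to replace the concentration step by something along these lines.
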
 

When $0< \sigma <1$,  equation \eqref{Int} is closely related to the fractional Yamabe equation.    Fractional Yamabe equations with isolated singularities were considered in \cite{ADGW,CJSX,DPGW},  while solutions with a higher dimensional singular set have been studied by Jin-de Queiroz-Sire-Xiong \cite{JQSX}, and by Ao-Chan-DelaTorre-Fontelos-Gonzalez-Wei \cite{ACDFGW}  which develops a Mazzeo-Pacard gluing program (see \cite{MP})  in the fractional setting.  Note that for the singular set $\Sigma$ of  the integral equation \eqref{Int},  we only assume that $\Sigma$ has $n$-dimensional  Lebesgue measure 0, which is a weaker condition than the singular set of  Newton capacity 0 studied by Chen-Lin \cite{Chen-Lin95} to the second order equation \eqref{CGS-01} and the singular set of fractional capacity 0 studied by Jin-de Queiroz-Sire-Xiong \cite{JQSX} to the fractional Yamabe equation. 

Further,  if $\Sigma$ is a smooth submanifold of  $\mathbb{R}^n$,  then we also have the asymptotic symmetry of singular solutions of \eqref{Int}. 
\begin{theorem}\label{IEthm02}
Suppose $n \geq 1$, $0< \sigma < n/2$,  $\Sigma \subset \mathbb{R}^n$ is a smooth $k$-dimensional closed manifold  with $k\leq n-1$,     $N$ is a tubular neighborhood of $\Sigma$ and $\Pi$ is the orthogonal projection of $N$ onto $\Sigma$ described as before.   Let $h\in C^1(B_2)$ be a positive function and  $u \in L^{\frac{n+2\sigma}{n-2\sigma}}(B_2) \cap C(B_2 \backslash \Sigma)$  be a positive solution of \eqref{Int}. Let $A\subset B_{2}$ be a compact subset of $\Sigma$.  Then we have, for $x, x^\prime \in \Pi_r^{-1}(z)$, 
\begin{equation}\label{In-Es02}
u(x)= u(x^\prime) (1 + O(r))~~~~ \textmd{as} ~ r \to 0^+,
\end{equation}
where  $O(r)$ is uniform for all $z \in A$.  
\end{theorem}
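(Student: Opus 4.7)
The plan is to apply a moving-sphere argument directly to the integral equation \eqref{Int}, with the base point ranging over the compact set $A\subset\Sigma$, thereby extending to higher-dimensional singular sets the approach used by Jin-Xiong \cite{JX19} in the case of an isolated singularity.

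First I would fix $z\in A$ and, for small $\lambda>0$, introduce the Kelvin transform $u_{z,\lambda}$ as in \eqref{Kel}. Combining \eqref{Int} with the conformal identity
\[
|x-y|^{-(n-2\sigma)} = \Bigl(\tfrac{|x-z|\,|y-z|}{\lambda^{2}}\Bigr)^{n-2\sigma}\,|x_{z,\lambda}-y_{z,\lambda}|^{-(n-2\sigma)},
\]
where $y_{z,\lambda}=z+\lambda^{2}(y-z)/|y-z|^{2}$, one obtains an integral identity for $w_{z,\lambda}:=u_{z,\lambda}-u$ on $B_\lambda(z)\setminus(\Sigma\cup\{z\})$ of the schematic form
\[
w_{z,\lambda}(x) = \int_{B_\lambda(z)\setminus\Sigma} K_{z,\lambda}(x,y)\bigl(u_{z,\lambda}(y)^{p}-u(y)^{p}\bigr)\,dy + E_{z,\lambda}(x),
\]
with $p=(n+2\sigma)/(n-2\sigma)$, a nonnegative kernel $K_{z,\lambda}$, and an error term $E_{z,\lambda}$ gathering the regular contribution $h-h_{z,\lambda}$ and the integral of $u^{p}/|x-y|^{n-2\sigma}$ taken over $B_{2}\setminus B_\lambda(z)$ (which, after conformal bookkeeping, becomes a boundary correction). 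The a priori bound $u(y)\leq C d(y,\Sigma)^{-(n-2\sigma)/2}$ from Theorem \ref{IEthm01} together with $h\in C^{1}$ would then give $E_{z,\lambda}(x)\geq -C\lambda\, u_{z,\lambda}(x)$ uniformly in $z\in A$.

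Next I would run a narrow-region-style moving-sphere argument in integral form to show that there is a threshold $\bar\lambda>0$, uniform in $z\in A$, such that for all $0<\lambda<\bar\lambda$ one has $w_{z,\lambda}(x)\geq -C\lambda\, u_{z,\lambda}(x)$ on $B_\lambda(z)\setminus(\Sigma\cup\{z\})$. Finally, for $x,x'\in\Pi_{r}^{-1}(z)$, I would choose the hyperplane inside $(T_{z}\Sigma)^{\perp}$ passing through $z$ that perpendicularly bisects the segment $xx'$ and apply the inequality at scale $\lambda=r$ in both orderings of $(x,x')$: since the Kelvin inversion at $(z,r)$ fixes the sphere $|y-z|=r$ and maps $x$ to $x'$ up to an $O(r^{2})$ normal-fiber curvature correction, the two-sided comparison yields $u(x)=u(x')(1+O(r))$ with $O(r)$ uniform in $z\in A$.

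The principal obstacle is closing the moving-sphere step with a threshold $\bar\lambda$ that is \emph{independent} of $z\in A$, despite pieces of $\Sigma$ other than $z$ entering $B_\lambda(z)$ and despite the non-global nature of \eqref{Int} (integration over $B_{2}$ rather than $\mathbb{R}^{n}$). Since $\Sigma$ is smooth of dimension $k$, in a ball of radius $\lambda$ around $z$ it is an $O(\lambda^{2})$ perturbation of the affine $k$-plane $z+T_{z}\Sigma$, and this curvature error, the regular part $h$, and the truncation at $\partial B_{2}$ all produce contributions absorbable into $E_{z,\lambda}$ at order $O(\lambda)$; verifying this absorption uniformly over the compact set $A$ and extracting the sharp $O(r)$ rate in \eqref{In-Es02} is where the main work lies.
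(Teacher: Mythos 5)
Your proposal has a genuine gap, and the central issue is the geometry of the final comparison step. You propose to center the Kelvin inversion at the point $z\in\Sigma$ itself, and you claim that inversion at $(z,r)$ ``maps $x$ to $x'$'' for $x,x'\in\Pi_r^{-1}(z)$. That is false: the Kelvin inversion $y\mapsto z+r^2(y-z)/|y-z|^2$ fixes every point of the sphere $\{|y-z|=r\}$, so both $x$ and $x'$ are fixed and the inequality $u_{z,r}\le u$ (or any perturbed version of it) gives you nothing comparing $u(x)$ with $u(x')$. A reflection in a hyperplane through $z$ inside $(T_z\Sigma)^\perp$ \emph{would} exchange $x$ and $x'$, but reflections are not Kelvin transforms and are not controlled by the moving-sphere inequality you set up; moreover, the paper explicitly notes that plane reflections do not preserve the local structure here, which is precisely why the moving-sphere method is used.

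The paper's route avoids both of your obstacles at once. It never centers a moving sphere on $\Sigma$; instead, it proves the clean inequality $u_{x,\lambda}(y)\le u(y)$ for every base point $x\in\overline{B}_{1/4}\setminus\Sigma$ and every radius $0<\lambda<\mathrm{dist}(x,\Sigma)\le\varepsilon$, with $\varepsilon$ uniform (this uses the upper bound from Theorem~\ref{IEthm01} to dominate $u_{x,\lambda}$ far from $x$ and the same kernel estimates as in Theorem~\ref{IEthm01}). To compare $u(x_1)$ with $u(x_2)$ on a fiber $\Pi_r^{-1}(z)$, it then picks the inversion center $x_3=x_1+\tfrac{\varepsilon}{4}\tfrac{x_1-x_2}{|x_1-x_2|}$, which lies on the line through $x_1,x_2$ a distance $\varepsilon/4$ beyond $x_1$ and hence has $\mathrm{dist}(x_3,\Sigma)\approx\varepsilon$, and chooses $\lambda=\sqrt{\tfrac{\varepsilon}{4}(|x_1-x_2|+\tfrac{\varepsilon}{4})}<\mathrm{dist}(x_3,\Sigma)$ so that the Kelvin inversion at $(x_3,\lambda)$ sends $x_1$ to $x_2$ exactly. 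Applying $u_{x_3,\lambda}(x_2)\le u(x_2)$ and computing the conformal factor $(\lambda/|x_2-x_3|)^{n-2\sigma}=(1+4|x_1-x_2|/\varepsilon)^{-(n-2\sigma)/2}=1+O(r)$ gives \eqref{In-Es02}. Note this needs no curvature expansion of $\Sigma$ at all, so your worry about absorbing an $O(\lambda^2)$ manifold-curvature error is moot. Your proposed weak inequality $w_{z,\lambda}\ge -C\lambda\,u_{z,\lambda}$ centered at a singular point is both harder to establish (the inverted singular set now sits inside $B_\lambda(z)$, and $u_{z,\lambda}$ is unbounded there) and, as explained above, insufficient to conclude.
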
 

In Theorems \ref{IEthm01} and \ref{IEthm02}, we allow the singular set $\Sigma$ to intersect the boundary $\partial B_2$,  which is essential  for applying these results to the differential equation \eqref{HOE}. When $\Sigma=\{0\}$ is an isolated singularity,  Theorems \ref{IEthm01} and \ref{IEthm02} were proved by Jin-Xiong \cite{JX19}.

Finally, for the global singular solutions of the differential equation \eqref{Whole}, under the assumptions of Theorem \ref{Thm03}, one can also show  that $u\in L_{\textmd{loc}}^{\frac{n+2\sigma}{n-2\sigma}}(\mathbb{R}^n) $ and all the singular positive solutions  of \eqref{Whole} satisfy the following integral equation \eqref{Whole-Int} (see Theorem \ref{T-Glo}  in Section  2).    This has been proved by Chen-Li-Ou \cite{CLO06} to be true for the entire smooth positive solutions of \eqref{Whole}  in $\mathbb{R}^n$.    Their proof makes use of the sign conditions \eqref{HOE02} in $\mathbb{R}^n$ for smooth  solutions  proved by Wei-Xu \cite{Wei-Xu},  which is different from ours.  Of course, our proof also works for smooth solutions.      

Let $\mathbb{R}^k$ be a $k$-dimensional subspace of $\mathbb{R}^n$, and let $\sigma$ be a real number satisfying $0 < \sigma < n/2$.   Consider the integral equation
\begin{equation}\label{Whole-Int}
u(x)= \int_{\mathbb{R}^n} \frac{u(y)^{\frac{n+2\sigma}{n-2\sigma}}}{|x -y|^{n-2\sigma}} dy,~~~~~ x\in \mathbb{R}^n \backslash \mathbb{R}^k. 
\end{equation}
Then we have the following
\begin{theorem}\label{IEthm03}
Let $0\leq k \leq n-1$ and $u\in L_{\textmd{loc}}^{\frac{n+2\sigma}{n-2\sigma}}(\mathbb{R}^n) \cap C(\mathbb{R}^n \backslash \mathbb{R}^k)$ be a positive solution of \eqref{Whole-Int}. Suppose there exists $x_0\in \mathbb{R}^k$ such that $\limsup_{x\to x_0}u(x) =\infty$.  Then
$$
u(x^\prime, x^{\prime\prime}) = u(x^\prime, \tilde{x}^{\prime\prime}), 
$$
where $x^\prime \in \mathbb{R}^k$ and $x^{\prime\prime}, \tilde{x}^{\prime\prime} \in \mathbb{R}^{n-k}$ that $|x^{\prime\prime}| = |\tilde{x}^{\prime\prime}|$.  In particular, If $k=0$, i.e., $\mathbb{R}^k=\{0\}$,  and the origin is a non-removable singularity, then $u$ is radially symmetric and monotonically decreasing about the origin.  
\end{theorem}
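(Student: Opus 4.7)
The plan is to prove the symmetry by the integral form of the method of moving planes, adapting the schemes of Chen--Li--Ou \cite{CLO06} and Jin--Xiong \cite{JX19} to the hyperplane singular set $\mathbb{R}^k$. Translating along $\mathbb{R}^k$ (which preserves both the equation and the singular set) we may assume $x_0 = 0$. It suffices to show that $u$ is symmetric about $T_0 := \{x : x \cdot e = 0\}$ for every unit vector $e \in (\mathbb{R}^k)^\perp$; varying $e$ over the unit sphere of $(\mathbb{R}^k)^\perp$ then yields the stated symmetry. Fix such an $e$, and for $\lambda \in \mathbb{R}$ set $T_\lambda := \{x \cdot e = \lambda\}$, $\Sigma_\lambda := \{x \cdot e > \lambda\}$, let $x^\lambda$ denote the reflection of $x$ across $T_\lambda$, and put $u_\lambda(x) := u(x^\lambda)$. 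The change of variable $y \mapsto y^\lambda$ in \eqref{Whole-Int} shows that $u_\lambda$ satisfies the same integral equation on $\mathbb{R}^n \setminus (\mathbb{R}^k + 2 \lambda e)$, and decomposing the defining integral along $\Sigma_\lambda$ and its complement gives the symmetric comparison identity
\[
u_\lambda(x) - u(x) = \int_{\Sigma_\lambda} \left( \frac{1}{|x - y|^{n-2\sigma}} - \frac{1}{|x^\lambda - y|^{n-2\sigma}} \right) \left( u_\lambda(y)^{\frac{n+2\sigma}{n-2\sigma}} - u(y)^{\frac{n+2\sigma}{n-2\sigma}} \right) dy,
\]
whose kernel is strictly positive for $x \in \Sigma_\lambda$.

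On the set $\Sigma_\lambda^- := \{x \in \Sigma_\lambda : u_\lambda(x) > u(x)\}$, the mean value theorem combined with the Hardy--Littlewood--Sobolev inequality and H\"older's inequality produces the self-improving estimate
\[
\| u_\lambda - u \|_{L^{\frac{2n}{n-2\sigma}}(\Sigma_\lambda^-)} \leq C \| u_\lambda \|_{L^{\frac{2n}{n-2\sigma}}(\Sigma_\lambda^-)}^{\frac{4\sigma}{n-2\sigma}} \| u_\lambda - u \|_{L^{\frac{2n}{n-2\sigma}}(\Sigma_\lambda^-)}.
\]
To initiate the sliding I would localize \eqref{Whole-Int} on each ball $B_R$, absorbing the far-field contribution into a smooth positive function $h$, and then apply Theorem \ref{IEthm01} together with the dilation invariance of the equation to obtain the global pointwise bound $u(z) \leq C d(z, \mathbb{R}^k)^{-(n-2\sigma)/2}$. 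Changing variables $y \mapsto y^\lambda$ in $\|u_\lambda\|_{L^{2n/(n-2\sigma)}(\Sigma_\lambda^-)}$ and exploiting this bound, one checks that the quantity becomes smaller than the critical constant once $\lambda$ is sufficiently large, forcing $\Sigma_\lambda^- = \emptyset$. Set $\lambda_0 := \inf\{\lambda > 0 : \Sigma_\mu^- = \emptyset \textrm{ for every } \mu \geq \lambda\}$.

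The decisive step is to rule out $\lambda_0 > 0$. If $\lambda_0 > 0$, then by continuity $u_{\lambda_0} \leq u$ in $\Sigma_{\lambda_0}$, and the positivity of the comparison kernel yields the strong alternative that either $u_{\lambda_0} < u$ strictly in $\Sigma_{\lambda_0}$, or $u_{\lambda_0} \equiv u$ everywhere both are defined. The identity case is precluded by the non-removable singularity hypothesis, since it would transplant $\limsup_{x \to 0} u(x) = \infty$ to the regular point $2\lambda_0 e \notin \mathbb{R}^k$, contradicting the continuity of $u$ on $\mathbb{R}^n \setminus \mathbb{R}^k$. In the remaining case, an absolute continuity argument on the $L^{2n/(n-2\sigma)}$-norm shows that $\|u_\lambda\|_{L^{2n/(n-2\sigma)}(\Sigma_\lambda^-)}$ stays below the critical constant for $\lambda$ slightly less than $\lambda_0$, contradicting the definition of $\lambda_0$. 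Hence $\lambda_0 = 0$, and running the same argument in the direction $-e$ yields $u_0 \equiv u$ on $\Sigma_0$, i.e., symmetry about $T_0$; for $k = 0$ the strict inequality $u_\lambda < u$ on $\Sigma_\lambda$ for every $\lambda > 0$ delivers the monotone decrease from the origin. The hardest part is this "sliding past $\lambda_0$" step, because the hyperplane singular set $\mathbb{R}^k$ prevents $u$ from lying in $L^{2n/(n-2\sigma)}(\mathbb{R}^n)$, so the absolute continuity must be localized using the pointwise upper bound above together with the strict inequality $u_{\lambda_0} < u$.
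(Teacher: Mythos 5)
Your proposal uses the method of moving planes, whereas the paper uses the method of moving spheres (Kelvin transforms centered at points $x\in\mathbb{R}^{n-k}\setminus\{0\}$, with radius $\lambda<|x|$). This is not a cosmetic difference: for singular solutions the two behave very differently near the singular set, and your version runs into a structural obstruction.

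The core problem is what the reflection does to the singularity. Fix a unit vector $e\in(\mathbb{R}^k)^\perp$ and $\lambda>0$. Since $\mathbb{R}^k\subset T_0$, the reflection of $\mathbb{R}^k$ across $T_\lambda$ is the affine plane $\mathbb{R}^k+2\lambda e\subset T_{2\lambda}\subset\Sigma_\lambda$. Consequently $u_\lambda=u\circ(\cdot)^\lambda$ inherits the blow-up of $u$ along $\mathbb{R}^k$ and is unbounded near $\mathbb{R}^k+2\lambda e$, which sits inside the comparison region $\Sigma_\lambda$, while $u$ is continuous there. This has two immediate consequences. First, $\Sigma_\lambda^-=\{u_\lambda>u\}$ is \emph{never} empty for $\lambda>0$ (it contains a neighborhood of $\mathbb{R}^k+2\lambda e$, with the singular points removed), so the quantity $\lambda_0:=\inf\{\lambda>0:\Sigma_\mu^-=\emptyset\ \forall\mu\geq\lambda\}$ is $+\infty$, not a finite starting point. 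One can see this concretely for the model $u(x)=c|x|^{-(n-2\sigma)/2}$: for every $x\in\Sigma_\lambda$ one has $|x^\lambda|<|x|$ and hence $u_\lambda(x)>u(x)$ on all of $\Sigma_\lambda$. Second, the $L^{2n/(n-2\sigma)}$ norms in your Hardy--Littlewood--Sobolev/H\"older estimate are infinite: for a half-power blow-up, $u^{2n/(n-2\sigma)}\approx d(\cdot,\Lambda)^{-n}$ is not locally integrable near the singular set, so $\|u_\lambda\|_{L^{2n/(n-2\sigma)}(\Sigma_\lambda^-)}=\infty$, and the inequality $\|u_\lambda-u\|\leq C\|u_\lambda\|^{4\sigma/(n-2\sigma)}\|u_\lambda-u\|$ is vacuous. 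Localizing and invoking the pointwise bound from Theorem~\ref{IEthm01} does not help, because the divergence is local, precisely at the reflected singularity that the moving-plane setup forces into $\Sigma_\lambda$. (A related flaw: you claim $u_{\lambda_0}<u$ strictly on $\Sigma_{\lambda_0}$ and then want to slide past $\lambda_0$, but near $2\lambda_0 e$ the inequality goes the other way since $u_{\lambda_0}$ blows up there.)

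The moving-sphere argument in the paper is exactly designed to avoid this. With $u_{x,\lambda}(\xi)=(\lambda/|\xi-x|)^{n-2\sigma}u(\xi^{x,\lambda})$ and $x\in\mathbb{R}^{n-k}\setminus\{0\}$, $\lambda<|x|$, the inverse image of the singular set $\mathbb{R}^k$ under $\xi\mapsto\xi^{x,\lambda}$ lies \emph{inside} $B_\lambda(x)$ (the image of $0$ is $x(1-\lambda^2/|x|^2)$, at distance $\lambda^2/|x|<\lambda$ from $x$), so $u_{x,\lambda}$ is continuous and finite on the entire comparison region $\{|y-x|\geq\lambda\}\setminus\mathbb{R}^k$. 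The paper then establishes Claim~1 (start the sphere with small $\lambda$) using a gradient estimate near $x$ plus a decay lower bound from Fatou's lemma, and Claim~2 ($\bar\lambda(x)=|x|$) by the positivity of the kernel $K(x,\lambda;y,z)$ and a delicate $\varepsilon$-improvement of the type appearing in Theorem~\ref{IEthm01}, finally sending the center $x\to\infty$ along $\mathbb{R}^{n-k}$ to recover hyperplane reflections and thus the $\mathbb{R}^{n-k}$-radial symmetry. If you want to salvage a plane-based argument, you would have to explain how to compare $u$ with $u_\lambda$ on a region that excludes a neighborhood of $\mathbb{R}^k+2\lambda e$ and then let that neighborhood shrink, which is essentially re-deriving the moving-sphere picture in disguise.
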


When $k=0$, Theorem \ref{IEthm03} was proved by Chen-Li-Ou \cite{CLO05}.  In this case (when $k=0$), it is well-known that $c|x|^{-\frac{n-2\sigma}{2}}$ is a singular solution of \eqref{Whole-Int} for a positive constant $c$ depending only on $n$ and $\sigma$, the other singular solutions, e.g.,  the Fowler solutions,  of \eqref{Whole-Int} were  obtained by Jin-Xiong \cite{JX19}. See also \cite{GHWW,DPGW,Fr-K19} for the existence of Fowler solutions of related differential equations.

Since the integral equations \eqref{Int} and \eqref{Whole-Int} are conformally invariant (see Section \ref{S3000}),   we shall  prove Theorems \ref{IEthm01}, \ref{IEthm02} and \ref{IEthm03} using  the method of moving spheres introduced by Li-Zhu \cite{Li-Zhu} for differential equations and by Li \cite{Li04} for integral equations.  A difference from the integral equations studied in \cite{Li04,CLO06} is that our integral equation \eqref{Int} is locally defined,  and we need some more delicate estimates.   Another difference is that we are concerned with the singular solutions of \eqref{Int} and \eqref{Whole-Int}.   
More applications of the method of moving spheres can be found in \cite{CJSX,JX19,LL03,Li06,LZ03,Zhang02}.

This paper is organized as follows. In Section \ref{S2}, we show the integral representations  for  singular positive solutions to  the differential equations \eqref{HOE}  and \eqref{Whole}, respectively.  In Section \ref{S3000},  we prove the upper bounds in Theorems \ref{IEthm01} and  \ref{Thm01}.  In Section \ref{S4000}, we show the asymptotic symmetry of the solutions near the singular set  in Theorems \ref{IEthm02} and \ref{Thm02}. In Section \ref{S5000}, we show the symmetry results of global singular solutions in Theorems \ref{IEthm03} and \ref{Thm03},   where we also give the proof of Corollary \ref{Cor01}.  

\vskip0.15in  

\noindent{\bf Acknowledgments.}   Both authors  would like to thank Professor Tianling Jin for many helpful discussions and encouragement.

\section{Integral representations for  singular solutions}\label{S2}
\subsection{Local singular solutions}\label{S2-01}
In this subsection, we show that every singular positive solution of the differential equation \eqref{HOE} satisfies the integral equation \eqref{Int} in some local sense under suitable assumptions.    Firstly, we prove that under the assumptions of Theorem \ref{Thm01} (\eqref{HOE02} is not needed here),  then $u \in L_{\textmd{loc}}^{\frac{n + 2m}{n - 2m}}(B_2) $ and  $u$ is a distributional solution in the entire ball $B_2$. 

\begin{proposition}\label{P201}
Suppose that $1 \leq m < n/2$ and $m$ is an integer. Let $\Lambda\subset B_{1/2}$ be a compact set with the upper Minkowski dimension $\overline{\textmd{dim}}_M(\Lambda)$ (not necessarily an integer),  $\overline{\textmd{dim}}_M(\Lambda) < \frac{n-2m}{2}$,  or be a smooth $k$-dimensional closed manifold with $k\leq \frac{n-2m}{2}$.  Let $u\in C^{2m}(\overline{B}_2 \backslash \Lambda)$ be a positive solution of \eqref{HOE}. Then $u \in L^{\frac{n+ 2m}{n - 2m}}(B_2) $ and $u$ is a distributional solution in the entire ball $B_2$, i.e., we have 
\begin{equation}\label{Dis2}
\int_{B_2} u (-\Delta)^m \varphi  dx = \int_{B_2} u^{\frac{n+2m}{n-2m}} \varphi  dx
\end{equation}
for every  $\varphi \in C_c^\infty(B_2)$. 
\end{proposition}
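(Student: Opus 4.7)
The plan is to deduce both assertions from a single integration-by-parts identity, with a family of cutoffs that vanish to high order on $\Lambda$ and on $\partial B_2$, followed by a limit. Fix $\zeta \in C_c^\infty(B_2)$ with $\zeta \equiv 1$ on $\overline{B_{3/2}}$, and for small $\epsilon>0$ choose smooth $\chi_\epsilon:\R^n\to[0,1]$ equal to $0$ on $\{d(x,\Lambda)\le\epsilon\}$, equal to $1$ on $\{d(x,\Lambda)\ge 2\epsilon\}$, and satisfying $|\partial^\gamma\chi_\epsilon|\le C_\gamma\epsilon^{-|\gamma|}$; such a $\chi_\epsilon$ can be built by composing a fixed smooth profile with a mollified distance function. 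Fa\`a di Bruno together with $\chi_\epsilon\le 1$ then yields the sharper pointwise bound $|\partial^\gamma\chi_\epsilon^N|\le C_{\gamma,N}\chi_\epsilon^{N-|\gamma|}\epsilon^{-|\gamma|}$ for $|\gamma|\le N$, which is essential below since a power of $\chi_\epsilon$ survives the differentiation. The dimension hypothesis controls the annular region $A_\epsilon:=\{\epsilon\le d(x,\Lambda)\le 2\epsilon\}$: in the Minkowski case $|A_\epsilon|\le C_\lambda\epsilon^{n-\lambda}$ for any $\lambda\in(\overline{\dim}_M\Lambda,\,(n-2m)/2)$, and in the smooth-manifold case $|A_\epsilon|\le C\epsilon^{n-k}$ with $k\le (n-2m)/2$.

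To obtain $u\in L^p(B_2)$ with $p=(n+2m)/(n-2m)$, I fix an integer $N\ge (n+2m)/2$ and test \eqref{HOE} against $\chi_\epsilon^N\zeta\in C_c^\infty(B_2\setminus\Lambda)$; integration by parts is exact and yields
\[
\int_{B_2} u^p\,\chi_\epsilon^N\zeta\,dx \;=\; \int_{B_2} u\,(-\Delta)^m(\chi_\epsilon^N\zeta)\,dx.
\]
Expanding the right-hand side by Leibniz, terms in which every derivative lands on $\zeta$ are bounded uniformly in $\epsilon$ because $\mathrm{supp}\,\nabla\zeta\subset \overline{B_2}\setminus B_{3/2}$ is compact and $u$ is continuous there (recall $\Lambda\subset B_{1/2}$). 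The remaining terms are supported on $A_\epsilon$ and pointwise dominated by $C\chi_\epsilon^{N-2m}\epsilon^{-2m}$. H\"older's inequality with exponents $p$ and $p'=(n+2m)/(4m)$, followed by Young's inequality $ab\le \delta a^p+C_\delta b^{p'}$, then gives
\[
\epsilon^{-2m}\int_{A_\epsilon} u\,\chi_\epsilon^{N-2m}\,dx \;\le\; \delta\int_{B_2} u^p\chi_\epsilon^N\,dx + C_\delta\,\epsilon^{-(n+2m)/2}|A_\epsilon|,
\]
and $\epsilon^{-(n+2m)/2}|A_\epsilon|\le C\epsilon^{(n-2m)/2-\lambda}$ is uniformly bounded by the choice of $\lambda$. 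Picking $\delta$ small enough to absorb the first summand into the left-hand side of the identity produces $\int_{B_2} u^p\chi_\epsilon^N\zeta\,dx\le C$ independent of $\epsilon$. Since $\mathcal{L}^n(\Lambda)=0$ under either dimension assumption and $\chi_\epsilon^N\nearrow 1$ a.e.\ on $B_2\setminus\Lambda$, monotone convergence gives $u\in L^p(B_{3/2})$; combined with the boundedness of $u$ on $\overline{B_2}\setminus B_{3/2}$, this yields $u\in L^p(B_2)$.

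For the distributional identity \eqref{Dis2}, given $\varphi\in C_c^\infty(B_2)$ I test \eqref{HOE} against $\varphi\,\eta_\epsilon^{2m}$ with $\eta_\epsilon$ a cutoff of the same type as $\chi_\epsilon$: the identity $\int u^p\varphi\,\eta_\epsilon^{2m}\,dx=\int u\,(-\Delta)^m(\varphi\,\eta_\epsilon^{2m})\,dx$ holds for each $\epsilon$. The left-hand side and the principal right-hand term $\int u\,\eta_\epsilon^{2m}\,(-\Delta)^m\varphi\,dx$ both converge to the intended limits by dominated convergence, using $u\in L^p(B_2)\subset L^1(B_2)$. The cross terms in the Leibniz expansion are supported on $A_\epsilon$ and, by H\"older, are bounded above by $C\epsilon^{-|\beta|}|A_\epsilon|^{(p-1)/p}\bigl(\int_{A_\epsilon}u^p\,dx\bigr)^{1/p}$; the prefactor $\epsilon^{-|\beta|}|A_\epsilon|^{(p-1)/p}$ is uniformly bounded for $|\beta|\le 2m$ by the same computation as in the previous step, while $\bigl(\int_{A_\epsilon}u^p\,dx\bigr)^{1/p}\to 0$ by absolute continuity of the integral, since $u^p\in L^1$ and $|A_\epsilon|\to 0$. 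Hence the cross terms vanish in the limit and \eqref{Dis2} follows.

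The delicate point is the borderline smooth-manifold case $k=(n-2m)/2$, where $\epsilon^{-(n+2m)/2}|A_\epsilon|$ is only $O(1)$ rather than $o(1)$. Young's absorption is precisely what makes the integrability step succeed there, since only a uniform-in-$\epsilon$ bound is required (not decay); the distributional step is then unaffected because the decay of $\int_{A_\epsilon}u^p$ follows automatically from absolute continuity once $L^p$-integrability has been obtained. A secondary technical point meriting care is the pointwise bound $|\partial^\gamma\chi_\epsilon^N|\le C\chi_\epsilon^{N-|\gamma|}\epsilon^{-|\gamma|}$: it is a routine consequence of Fa\`a di Bruno, but relies on constructing $\chi_\epsilon$ from a mollified distance function composed with a fixed smooth profile, as noted at the start.
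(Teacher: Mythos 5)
Your proof is correct and follows essentially the same route as the paper's: test against a high power of a cutoff adapted to a tubular neighborhood of $\Lambda$, use the Minkowski-dimension (or smooth-manifold) bound $|A_\epsilon|\lesssim\epsilon^{n-\lambda}$ with $\lambda\le(n-2m)/2$, and absorb via H\"older and Young to get $u\in L^{(n+2m)/(n-2m)}$, then pass to the distributional limit using that the cross-term prefactor is $O(1)$ while $\int_{A_\epsilon}u^p\to 0$. The only cosmetic differences are that you insert an extra cutoff $\zeta$ to kill the boundary terms on $\partial B_2$ (the paper instead keeps and bounds them directly, using $u\in C^{2m}(\overline B_2\setminus\Lambda)$), and you bound $|A_\epsilon|$ directly from the Minkowski content rather than via the coarea formula and a surface-measure estimate as in the paper.
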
  
 
\begin{remark}
In a recent paper \cite{AGHW},  Ao, Gonz\'alez, Hyder and Wei  studied removable singularities and  superharmonicity of non-negative solutions to the fractional equation $(-\Delta)^\gamma u = u^{\frac{n+2\gamma}{n-2\gamma}}$ in $\mathbb{R}^n \backslash \Sigma$,  where $0< \gamma < \frac{n}{2}$.   Among other things, they proved if $\Sigma$ is a compact set in $\mathbb{R}^n$ with the upper Assouad dimension $\overline{\textmd{dim}}_A(\Sigma)   < \frac{n-2\gamma}{2}$, and  $u\in L_\gamma(\mathbb{R}^n) \cap L^{\frac{n+2\gamma}{n-2\gamma}}_{loc}(\mathbb{R}^n \backslash \Sigma)$ is a non-negative solution, then $u\in L^{\frac{n+2\gamma}{n-2\gamma}}_{loc}(\mathbb{R}^n)$ and $u$ is a distributional solution in $\mathbb{R}^n$. We also remark that for any compact set $E\subset \mathbb{R}^n$,  the relation $\overline{\textmd{dim}}_M(E) \leq \overline{\textmd{dim}}_A(E)$ holds, see, e.g., \cite{KLV13}.  
\end{remark} 
 
\begin{proof}
We first assume that $\Lambda\subset B_{1/2}$ be a compact set with the upper Minkowski dimension $\overline{\textmd{dim}}_M(\Lambda)  < \frac{n-2m}{2}$. Let $\mathcal{N}_r:=\{x\in \mathbb{R}^n ~|~\textmd{ dist}(x, \Lambda) < r\}$ be a tubular $r$-neighborhood of $\Lambda$. We fix a non-negative function $\rho\in C_c^\infty(B_1)$ satisfying $\int_{B_1} \rho dx=1$. Setting $\rho_\varepsilon(x)=\frac{1}{\varepsilon^n} \rho(\frac{x}{\varepsilon})$ for small $\varepsilon >0$.  As in Ao-Gonz\'alez-Hyder-Wei \cite{AGHW}, we define 
\begin{equation}\label{rho}
\eta_\varepsilon(x):= 1 - \int_{\mathcal{N}_{2\varepsilon}} \rho_\varepsilon(x-y) dy. 
\end{equation}
Then $\eta_\varepsilon \in C^\infty(\mathbb{R}^n)$ is a non-negative function with values in $[0,1]$,  and it satisfies
\begin{equation}\label{rho01}
\eta_\varepsilon(x) = 1 ~~~~ \textmd{on} ~\mathcal{N}_{3\varepsilon}^c~~~~~~ \textmd{and} ~~~~~~ \eta_\varepsilon(x) = 0 ~~~~ \textmd{on} ~ \mathcal{N}_{\varepsilon}. 
\end{equation} 
Moreover, we have,  for every $j=1, 2, \dots$, 
\begin{equation}\label{rho02}
|\nabla^j \eta_\varepsilon| \leq C \varepsilon^{-j}. 
\end{equation}
Next we shall use some arguments in Yang \cite{Y}. Let $\varphi_\varepsilon(x) = [\eta_\varepsilon(x)]^q$ with $q=\frac{n+2m}{2}$. Multiplying both sides of \eqref{HOE} by $\varphi_\varepsilon$ and using integration by parts, we obtain  
$$
\aligned
\int_{B_2} u^{\frac{n+2m}{n-2m}} \varphi_\varepsilon dx & = - \int_{\partial B_2} \frac{\partial(-\Delta)^{m-1} u}{\partial\nu}
dS + \int_{B_2} u (-\Delta)^m \varphi_\varepsilon dx\\
&\leq C + C \varepsilon^{-2m} \int_{\mathcal{N}_{3\varepsilon} \backslash \mathcal{N}_{\varepsilon}} u (\eta_\varepsilon)^{q-2m} dx \\
& \leq C + C \varepsilon^{-2m} \int_{\mathcal{N}_{3\varepsilon} \backslash \mathcal{N}_{\varepsilon}} u (\varphi_\varepsilon)^{\frac{n-2m}{n+2m}} dx \\
& \leq C + C \varepsilon^{-2m}  \left( \int_{B_2} u^{\frac{n+2m}{n-2m}} \varphi_\varepsilon dx \right)^{\frac{n-2m}{n+2m}} \left( \int_{\mathcal{N}_{3\varepsilon} \backslash \mathcal{N}_{\varepsilon}} 1 dx \right)^{\frac{4m}{n+2m}}. 
\endaligned
$$
Now we estimate the integral  $\int_{\mathcal{N}_{3\varepsilon} \backslash \mathcal{N}_{\varepsilon}} 1 dx$.  Choosing $\lambda > \overline{\textmd{dim}}_M(\Lambda)$ but sufficiently close to $\overline{\textmd{dim}}_M(\Lambda)$ such that $\frac{4m(n-\lambda)}{n+2m} -2m \geq 0$ (equivalently, $\lambda \leq \frac{n-2m}{2}$).  By Proposition 5.8 of \cite{KLV13}, there exist two constants $C$, $r_0>0$ such that
\begin{equation}\label{Dim09}
\mathcal{H}^{n-1} (\partial \mathcal{N}_\varepsilon) \leq C \varepsilon^{n-\lambda-1} ~~~~ \textmd{for}~ \textmd{all} ~ 0< \varepsilon <r_0,  
\end{equation}
where $\mathcal{H}^{n-1}$ is the $(n-1)$-dimensional Hausdorff measure on $\mathbb{R}^n$.   Note that the distance function $d(x):=\textmd{dist}(x, \Lambda)$ is a $1$-Lipschitz function.  It follows from Rademacher's theorem that $d(x)$ is differentiable $a.e.$ with $|\nabla d|=1$. Then by  the co-area formula (see, e.g., Evans-Gariepy \cite{EGbook}) and \eqref{Dim09},   we have
$$
\aligned
\int_{\mathcal{N}_{3\varepsilon} \backslash \mathcal{N}_{\varepsilon}} 1 dx  \leq \mathcal{L}^n (\mathcal{N}_{3\varepsilon} ) & = \int_{0}^{3\varepsilon}  \left( \int_{\partial \mathcal{N}_r} 1 d\mathcal{H}^{n-1} \right) dr \\
& \leq C \int_{0}^{3 \varepsilon} r^{n-\lambda-1} dr  \leq C \varepsilon^{n-\lambda}
\endaligned
$$
if $\varepsilon< r_0/3$.  This together with the above estimate yields 
\begin{equation}\label{908} 
\aligned
\int_{B_2} u^{\frac{n+2m}{n-2m}} \varphi_\varepsilon dx &\leq  C + C \varepsilon^{\frac{4m(n-\lambda)}{n+2m} -2m} \left( \int_{B_2} u^{\frac{n+2m}{n-2m}} \varphi_\varepsilon dx \right)^{\frac{n-2m}{n+2m}}\\
& \leq C + C  \left( \int_{B_2} u^{\frac{n+2m}{n-2m}} \varphi_\varepsilon dx \right)^{\frac{n-2m}{n+2m}},
\endaligned
\end{equation} 
where we have used the fact $\frac{4m(n-\lambda)}{n+2m} -2m \geq 0$ due to the choice of $\lambda$.  Using Young inequality in the last term in \eqref{908}, we have 
$$
\int_{B_2 \backslash \mathcal{N}_{3\varepsilon}} u^{\frac{n+2m}{n-2m}} dx \leq \int_{B_2} u^{\frac{n+2m}{n-2m}} \varphi_\varepsilon dx \leq C. 
$$
Sending $\varepsilon \to 0$, we obtain
$$
\int_{B_2}  u^{\frac{n+2m}{n-2m}} dx \leq C.
$$

When $\Lambda$ is a smooth $k$-dimensional closed manifold with $k\leq \frac{n-2m}{2}$,  the proof is very similar to the above,  and we only need to notice that in this case (when $\Lambda$  is smooth),  $\mathcal{L}^n (\mathcal{N}_{3\varepsilon}) \approx \varepsilon^{n-k}$ for $\varepsilon>0$ small.

Next, we show that $u$ is a distributional solution in $B_2$.  For any $\varphi \in C_c^{\infty}(B_2)$, using $\psi_\varepsilon:=\varphi\eta_\varepsilon$ as a test function in \eqref{HOE} with $\eta_\varepsilon$ as before gives  
\begin{equation}\label{Te567}
\int_{B_2} u^{\frac{n+2m}{n-2m}} \psi_\varepsilon dx= \int_{B_2} u\eta_\varepsilon (-\Delta)^m \varphi dx + \int_{B_2} u F_\epsilon(x) dx,
\end{equation}
where each term of $F_\varepsilon(x)$ involves the derivatives of $\eta_\varepsilon$ up to order $2m$, so it satisfies 
\begin{equation}\label{BF560}
|F_\varepsilon(x)| \leq C \varepsilon^{-2m} \cdot \chi_{\mathcal{N}_{3\varepsilon} \backslash \mathcal{N}_{\varepsilon}}(x). 
\end{equation}
Since $u^{\frac{n+2m}{n-2m}} \in L^1(B_2)$,   by the dominated convergence theorem
$$
\int_{B_2} u^{\frac{n+2m}{n-2m}} \psi_\varepsilon dx \to \int_{B_2} u^{\frac{n+2m}{n-2m}} \varphi dx ~~ \textmd{and} ~~  \int_{B_2} u\eta_\varepsilon (-\Delta)^m \varphi dx \to \int_{B_2} u  (-\Delta)^m \varphi dx
$$
as $\varepsilon \to 0$. On the other hand, by H\"{o}lder inequality  we have
$$
\aligned
\left| \int_{B_2} u F_\epsilon(x) dx  \right|  & \leq C \left( \int_{\mathcal{N}_{3\varepsilon} \backslash \mathcal{N}_\varepsilon } u^{\frac{n+2m}{n-2m}} dx \right)^{\frac{n-2m}{n+2m}}  \left( \int_{\mathcal{N}_{3\varepsilon} \backslash  \mathcal{N}_\varepsilon} |F_\varepsilon|^{\frac{n+2m}{4m}} dx \right)^{\frac{4m}{n+2m}} \\
& \leq C \varepsilon^{-2m} \cdot \mathcal{L}^n (\mathcal{N}_{3\varepsilon})^{\frac{4m}{n+2m}} \cdot \left( \int_{\mathcal{N}_{3\varepsilon} \backslash \mathcal{N}_\varepsilon } u^{\frac{n+2m}{n-2m}} dx \right)^{\frac{n-2m}{n+2m}} \\
&\leq C \varepsilon^{\frac{4m(n-\lambda)}{n+2m} -2m} \left( \int_{\mathcal{N}_{3\varepsilon} \backslash \mathcal{N}_\varepsilon } u^{\frac{n+2m}{n-2m}} dx \right)^{\frac{n-2m}{n+2m}} \to 0
\endaligned
$$
as $\varepsilon\to 0$ because of  $\frac{4m(n-\lambda)}{n+2m} -2m \geq 0$ and $\lim_{\varepsilon\to0} \mathcal{L}^n (\mathcal{N}_{3\varepsilon} \backslash \mathcal{N}_\varepsilon)=0$. Thus, $u$ is a distributional solution in the entire ball $B_2$. 
\end{proof}

Suppose $n >2m$. Let  $G_m(x, y)$ be the Green function of $(-\Delta)^m$ on $B_2$ under the Navier
boundary condition: 
\begin{equation}\label{green}
\begin{cases}
(-\Delta)^m G_m(x, \cdot) =\delta_x ~~ & \textmd{in }~ B_2, \\
G_m(x, \cdot)=-\Delta G_m(x, \cdot) = \cdots =(-\Delta)^{m-1} G_m(x, \cdot) =0 ~~ & \textmd{on}~ \partial B_2,
\end{cases}
\end{equation}
where $\delta_x$ is the Dirac measure to the point  $x \in B_2$. 
Then we have, for any $u\in C^{2m}(B_1) \cap C^{2m-2}(\overline{B}_1)$, 
$$
u(x)=\int_{B_2} G_m(x, y) (-\Delta)^m u(y) dy + \sum_{i=1}^m  \int_{\partial B_2} H_i(x, y) (-\Delta)^{i-1}u(y) dS_y, 
$$
where 
$$
H_i(x,y)= - \frac{\partial}{\partial\nu_y} (-\Delta_y)^{m-i} G_m(x, y)~~~~ \textmd{for} ~ x\in B_2, y\in \partial B_2. 
$$
By direct computations, we have
\begin{equation}\label{GrFu}
G_m(x,y)=c_{n,m} |x-y|^{2m-n} + A_m(x, y),
\end{equation}
 $c_{n,m}=\frac{\Gamma(\frac{n}{2} - m)}{2^{2m} \pi^{n/2} \Gamma(m)}$, $A_m(x, y)$ is smooth in $B_2 \times B_2$, and 
\begin{equation}\label{GrFu09}
H_i(x, y) \geq 0,~~~~~~ i=1, \dots, m. 
\end{equation}

\begin{proposition}\label{P202}
Assume as in Proposition \ref{P201}. Let $u\in C^{2m}(\overline{B}_2 \backslash \Lambda)$ be a positive solution of \eqref{HOE}.  Then
\begin{equation}\label{InRLo}
u(x)=\int_{B_2} G_m(x, y) u(y)^{\frac{n+2m}{n-2m}} dy + \sum_{i=1}^m  \int_{\partial B_2} H_i(x, y) (-\Delta)^{i-1}u(y) dS_y
\end{equation}
for all $x\in B_2 \backslash \Lambda$. 
\end{proposition}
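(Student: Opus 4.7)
The plan is to decompose $u = \tilde w + v$, where
\[
\tilde w(x) := \int_{B_2} G_m(x,y)\, u(y)^{\frac{n+2m}{n-2m}}\, dy
\]
is the Green-function Riesz potential of the nonlinearity and $v$ is a polyharmonic remainder, and then to read off \eqref{InRLo} from the classical Green identity applied to $v$. First I would check that $\tilde w \in C^{2m}(\overline{B_2}\setminus \Lambda)$ with $(-\Delta)^m \tilde w = u^{\frac{n+2m}{n-2m}}$ classically on $B_2 \setminus \Lambda$; this uses $u^{\frac{n+2m}{n-2m}} \in L^1(B_2)$ from Proposition \ref{P201}, the local integrability of $G_m(x,y) \sim c_{n,m}|x-y|^{2m-n}$ near $y = x$, and standard interior regularity for the polyharmonic Navier problem applied on $B_2 \setminus \Lambda$. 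Since $\Lambda \subset B_{1/2}$ is separated from $\partial B_2$, boundary regularity (or direct computation using that $(-\Delta_x)^{i-1} G_m(x,y) \to 0$ as $x \to \partial B_2$) then gives the Navier boundary data $(-\Delta)^{i-1}\tilde w \equiv 0$ on $\partial B_2$ for $i = 1, \dots, m$.

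Second I would upgrade this to a distributional identity on all of $B_2$. For $\varphi \in C_c^\infty(B_2)$, Fubini's theorem combined with the fundamental-solution property $\int_{B_2} G_m(x,y)(-\Delta_x)^m \varphi(x)\, dx = \varphi(y)$, obtained from the Navier conditions \eqref{green} via integration by parts with the compact support of $\varphi$ killing all boundary contributions, yields
\[
\int_{B_2}\tilde w(x)\,(-\Delta)^m\varphi(x)\,dx = \int_{B_2} u(y)^{\frac{n+2m}{n-2m}}\varphi(y)\,dy.
\]
Subtracting this from the distributional equation \eqref{Dis2} proved in Proposition \ref{P201} shows that $u - \tilde w$ solves $(-\Delta)^m(u - \tilde w) = 0$ in the sense of distributions on $B_2$. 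Weyl-type regularity for the polyharmonic operator then produces $v \in C^\infty(B_2)$ representing $u - \tilde w$ as a distribution; since $u, \tilde w \in C^{2m}(\overline{B_2}\setminus \Lambda)$ and $\Lambda \subset B_{1/2}$, this $v$ agrees pointwise with $u - \tilde w$ on $\overline{B_2}\setminus \Lambda$ and extends to a genuinely $C^{2m}(\overline{B_2})$ polyharmonic function.

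Finally I would apply the classical polyharmonic Green representation to the smooth polyharmonic function $v$,
\[
v(x) = \sum_{i=1}^m \int_{\partial B_2} H_i(x,y)(-\Delta)^{i-1} v(y)\, dS_y,
\]
and use the Navier vanishing of $\tilde w$ on $\partial B_2$ (which forces $(-\Delta)^{i-1} v = (-\Delta)^{i-1} u$ there for $i = 1, \dots, m$) to rewrite the boundary integrals in terms of $u$. Adding $\tilde w(x)$ back then recovers \eqref{InRLo}. The main obstacle is establishing the distributional identity $(-\Delta)^m \tilde w = u^{\frac{n+2m}{n-2m}}$ on the entire ball $B_2$: the Fubini exchange and the fundamental-solution identity must be justified with only the $L^1$-integrability of the right-hand side delivered by Proposition \ref{P201} and the singular behavior of $G_m$ near the diagonal, handling carefully the contribution from the singular set $\Lambda$. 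Once this is in place, the rest is a combination of interior elliptic regularity and the standard Green representation for smooth polyharmonic functions.
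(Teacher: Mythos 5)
Your proposal is correct and takes essentially the same approach as the paper: both define a Green-potential piece, use Proposition \ref{P201} to show the difference with $u$ is distributionally polyharmonic in $B_2$, and invoke Weyl-type regularity. The only cosmetic difference is that the paper folds the boundary integrals directly into its candidate $v$ and checks that $w=u-v$ has zero Navier data, whereas you keep only the bulk Riesz potential $\tilde w$ and recover the boundary integrals at the end by applying the Green representation to the smooth polyharmonic remainder $u-\tilde w$.
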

\begin{proof}
For any $x\in B_2 \backslash \Lambda$, define 
$$
v(x) = \int_{B_2} G_m(x, y) u(y)^{\frac{n+2m}{n-2m}} dy + \sum_{i=1}^m  \int_{\partial B_2} H_i(x, y) (-\Delta)^{i-1}u(y) dS_y. 
$$
Since $u(y)^{\frac{n+2m}{n-2m}} \in L^1(B_2)$ and the Riesz potential $|x|^{2m-n}$ is weak type $\left( 1, \frac{n}{n-2m} \right)$, $v\in L^{\frac{n}{n-2m}}_{weak}(B_2) \cap L^1(B_2)$. Let $w=u-v$. Then, by Proposition \ref{P201},  $w$ satisfies 
$$
(-\Delta)^m w=0 ~~~~~ \textmd{in} ~ B_2
$$
in the distributional sense, i.e., for any $\varphi\in C_c^\infty(B_2)$,
$$
\int_{B_2} w (-\Delta)^m \varphi dx =0. 
$$
By the regularity for polyharmonic functions (see, e.g., Mitrea \cite{Dis-book}), we know that $w\in C^\infty(B_2)$ is smooth and satisfies  $(-\Delta)^m w=0$  pointwise  in  $B_2$. Since  $w=-\Delta w = \cdots =
(-\Delta)^{m-1} w =0$ on $\partial B_2$, $w\equiv 0$ and thus $u=v$ in $B_2 \backslash \Lambda$.  
\end{proof}

Further, if assume additionally  that  \eqref{HOE02} holds, then one can show that $u$ satisfies the integral equation \eqref{Int} in some local sense. Namely, 
\begin{theorem}\label{T-Lo094}
Assume as in Theorem \ref{Thm01}. Then there exists $\tau>0$ (independent of $x\in \Lambda$)  such  that for any $x_0 \in \Lambda$  we have
\begin{equation}\label{T_Lo672}
u(x) = c_{n,m} \int_{B_\tau(x_0)} \frac{u(y)^{\frac{n+2m}{n-2m}}}{|x - y|^{n-2m}} dy + h_1(x) ~~~~~\textmd{for} ~ x\in B_\tau(x_0) \backslash \Lambda, 
\end{equation}
where $h_1(x)$ is a positive smooth function in $B_\tau(x_0)$.  
\end{theorem}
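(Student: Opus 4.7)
My plan is to apply the Green's function representation of Proposition \ref{P202}, substitute the decomposition $G_m(x,y)=c_{n,m}|x-y|^{2m-n}+A_m(x,y)$ from \eqref{GrFu}, and split the interior integral at $B_\tau(x_0)$. This directly rewrites $u$ as
$$u(x)=c_{n,m}\int_{B_\tau(x_0)}\frac{u(y)^{\frac{n+2m}{n-2m}}}{|x-y|^{n-2m}}\,dy+h_1(x),$$
with
$$h_1(x)=\int_{B_2\backslash B_\tau(x_0)}G_m(x,y)\,u^{\frac{n+2m}{n-2m}}\,dy+\int_{B_\tau(x_0)}A_m(x,y)\,u^{\frac{n+2m}{n-2m}}\,dy+\sum_{i=1}^{m}\int_{\partial B_2}H_i(x,y)(-\Delta)^{i-1}u(y)\,dS_y.$$
Everything then reduces to showing that this $h_1$ is smooth and strictly positive on $B_\tau(x_0)$ for a suitably small $\tau$ uniform in $x_0\in\Lambda$.

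For smoothness, I would avoid checking each piece individually (the first piece fails to be smooth across $\partial B_\tau(x_0)$) and instead use the equivalent form $h_1=u-v$, where $v(x):=c_{n,m}\int_{B_\tau(x_0)}|x-y|^{2m-n}u(y)^{\frac{n+2m}{n-2m}}\,dy$. Since $c_{n,m}|x-y|^{2m-n}$ is the fundamental solution of $(-\Delta)^m$ on $\mathbb R^n$ and $u^{(n+2m)/(n-2m)}\in L^1(B_2)$ by Proposition \ref{P201}, $v\in L^1_{\loc}(\mathbb R^n)$ satisfies $(-\Delta)^m v=u^{(n+2m)/(n-2m)}$ in $\mathcal D'(B_\tau(x_0))$. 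Combined with the distributional identity $(-\Delta)^m u=u^{(n+2m)/(n-2m)}$ on $B_2$ from Proposition \ref{P201}, one gets $(-\Delta)^m h_1=0$ in $\mathcal D'(B_\tau(x_0))$, and the interior regularity for polyharmonic distributions already used in Proposition \ref{P202} promotes $h_1$ to $C^\infty(B_\tau(x_0))$.

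For positivity, I would choose $\tau<1/4$, so that $B_\tau(x_0)\subset B_{3/4}$ for every $x_0\in\Lambda\subset B_{1/2}$, and estimate the three pieces of the explicit expression for $h_1$. The first piece is nonnegative because the Navier Green's function $G_m$ is strictly positive in $B_2\times B_2$ (iterated-Poisson construction) and $u>0$. The boundary piece admits a uniform lower bound $c_0>0$ over $x_0\in\Lambda$ and $x\in\overline{B_{3/4}}$: every summand is nonnegative by \eqref{GrFu09} together with \eqref{HOE02} (note $\partial B_2\cap\Lambda=\emptyset$), and the $i=1$ summand $\int_{\partial B_2}H_1(x,y)u(y)\,dS_y$ is strictly positive, since the iterated Green identity $(-\Delta_y)^{m-1}G_m=G_1$ identifies $H_1$ with the classical, strictly positive Poisson kernel of $-\Delta$ on $B_2$, while $u$ is smooth and positive on $\partial B_2$; continuity in $x$ and compactness of $\overline{B_{3/4}}$ then yield the uniform $c_0$. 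The $A_m$-piece satisfies
$$\Big|\int_{B_\tau(x_0)}A_m(x,y)u^{\frac{n+2m}{n-2m}}\,dy\Big|\le M\int_{B_\tau(x_0)}u^{\frac{n+2m}{n-2m}}\,dy,\qquad M:=\sup_{\overline{B_{3/4}}\times\overline{B_{3/4}}}|A_m|<\infty,$$
and the uniform absolute continuity of $u^{(n+2m)/(n-2m)}\in L^1(B_2)$, combined with the compactness of $\Lambda$, furnishes a single $\tau>0$ such that $M\int_{B_\tau(x_0)}u^{(n+2m)/(n-2m)}\,dy<c_0/2$ for every $x_0\in\Lambda$. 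Combining these three estimates gives $h_1\ge c_0/2>0$ throughout $B_\tau(x_0)$.

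The hard part is precisely this positivity step: the $A_m$-integral has no definite sign and involves the blowing-up density $u^{(n+2m)/(n-2m)}$, so positivity of $h_1$ is invisible piece by piece. It can only be recovered by balancing the strictly positive boundary floor (via the Poisson-kernel identity $H_1>0$) against the $L^1$-smallness of the $A_m$-integral on small balls, and the crucial uniformity of $\tau$ in $x_0\in\Lambda$ follows from $\Lambda$'s compactness and the absolute continuity of $L^1$-integrals.
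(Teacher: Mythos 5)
Your proposal is correct and follows essentially the same route as the paper: apply Proposition \ref{P202}, split $G_m = c_{n,m}|x-y|^{2m-n} + A_m$, split the interior integral at $B_\tau(x_0)$, and balance the $A_m$-piece (made small by absolute continuity of the $L^1$ integral of $u^{\frac{n+2m}{n-2m}}$, uniformly in $x_0$) against the strictly positive $H_1$-boundary term, with the remaining pieces nonnegative by $G_m\geq 0$, $H_i\geq 0$, and \eqref{HOE02}. The only cosmetic difference is that you obtain the uniform floor for $\int_{\partial B_2}H_1(x,y)u(y)\,dS_y$ by compactness and strict positivity of the Poisson kernel, whereas the paper uses the sharper normalization $\int_{\partial B_2}H_1(x,y)\,dS_y=1$ to bound it directly by $\inf_{\partial B_2}u$; and you spell out the smoothness step (via $(-\Delta)^m h_1=0$ in $\mathcal D'(B_\tau(x_0))$ and polyharmonic regularity), which the paper leaves as an exercise.
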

 
\begin{proof}
Without loss of generality, we can assume that $u\in C^{2m}(\overline{B}_2 \backslash \Lambda)$ and $u > 0$ in $\overline{B}_2 \backslash \Lambda$. Otherwise, we just consider the equation in a smaller ball. 

It follows from the assumptions on the singular set $\Lambda$ that  $\mathcal{H}^{n-2} (\Lambda)=0$ and hence $\textmd{Cap}(\Lambda)=0$, where $\mathcal{H}^{n-2} $ is the $(n-2)$-dimensional Hausdorff measure on $\mathbb{R}^n$  and $\textmd{Cap}(\Lambda)$ is the Newton capacity of $\Lambda$ (see, e.g.,  \cite{EGbook}).      Since $u>0$ and $-\Delta u \geq 0$ in $B_1\backslash \Lambda$,  by the maximum principle (see, e.g., Lemma 2.1 of  \cite{Chen-Lin95}) 
$$
u(x) \geq c_1:=\inf_{\partial B_2} u >0~~~~~ \textmd{for} ~ \textmd{all} ~ x \in \overline{B}_2 \backslash \Lambda. 
$$
By Proposition \ref{P201}, $u^{\frac{n+2m}{n-2m}} \in L^1(B_2)$. Hence, there exits $0< \tau< \frac{1}{4}$ independent of $z\in B_1$ such that 
$$
\int_{B_\tau(z)} |A_m(x, y)| u(y)^{\frac{n+2m}{n-2m}} dy  < \frac{c_1}{2} ~~~~~ \textmd{for}~  \textmd{all}  ~ x \in B_\tau(z) \subset B_{3/2},
$$
where $A_m(x, y)$ is as in \eqref{GrFu}.  By Proposition \ref{P202}, for every  $x_0 \in \Lambda$  we can write 
$$
u(x)=c_{n,m} \int_{B_\tau(x_0)} \frac{u(y)^{\frac{n+2m}{n-2m}}}{|x - y|^{n-2m}} dy + h_1(x),~~~~~ \textmd{for}  ~ x\in B_\tau(x_0) \backslash \Lambda, 
$$
where
$$
\aligned
h_1(x)  &  =  \int_{B_\tau(x_0)} A_m(x, y) u(y)^{\frac{n+2m}{n-2m}}dy + c_{n,m} \int_{B_2 \backslash B_\tau(x_0)} G_m(x, y) u(y)^{\frac{n+2m}{n-2m}}dy \\
&  ~~~ + \sum_{i=1}^m  \int_{\partial B_2} H_i(x, y) (-\Delta)^{i-1}u(y) dS_y \\
&  \geq -\frac{c_1}{2} +  \int_{\partial B_2} H_1(x, y) u(y) dS_y \\
&  \geq  -\frac{c_1}{2} + \inf_{\partial B_2} u =\frac{c_1}{2} >0 ~~~~~~ \textmd{for} ~ x \in B_\tau(x_0). 
\endaligned
$$
It is easy to check that $h_1$ is a smooth function in $B_\tau(x_0)$ and satisfies $(-\Delta)^m h_1=0$ in $B_\tau(x_0)$.  This completes the proof. 
\end{proof}

\subsection {Global singular  solutions}\label{S2-02} 
In this subsection, we show that if $0\leq k \leq \frac{n-2m}{2}$,  then every global singular positive solution of  \eqref{Whole} satisfies the integral equation \eqref{Whole-Int}. 
\begin{proposition}\label{Glo-P01}
Suppose that $1 \leq m < n/2$ and $m$ is an integer. Let $0\leq k \leq \frac{n-2m}{2}$ and $u \in C^{2m} (\mathbb{R}^n \backslash \mathbb{R}^k)$ be a nonnegative  solution of \eqref{Whole}. Then $u\in L^{\frac{n+2m}{n-2m}}_{loc} (\mathbb{R}^n)$ and $u$ is a distributional solution in $\mathbb{R}^n$. 
\end{proposition}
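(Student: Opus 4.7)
The plan is to mirror the proof of Proposition \ref{P201}, with an extra spatial cutoff accommodating the non-compactness of $\mathbb{R}^k$. Fix $R>1$ arbitrary; it suffices to prove the $L^{(n+2m)/(n-2m)}$ bound and the distributional identity on $B_R$. I would introduce $\chi\in C_c^\infty(B_{2R})$ with $\chi\equiv 1$ on $B_R$, $0\leq \chi \leq 1$ and $|\nabla^j\chi|\leq CR^{-j}$; together with $\eta_\varepsilon\in C^\infty(\mathbb{R}^n)$ defined exactly as in Proposition \ref{P201} but with $\mathbb{R}^k$ in place of $\Lambda$, so that $\eta_\varepsilon$ vanishes on $\mathcal{N}_\varepsilon:=\{x:d(x,\mathbb{R}^k)<\varepsilon\}$, equals $1$ outside $\mathcal{N}_{3\varepsilon}$, takes values in $[0,1]$, and satisfies $|\nabla^j\eta_\varepsilon|\leq C\varepsilon^{-j}$. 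The test function is $\varphi_\varepsilon:=(\chi\eta_\varepsilon)^q$ with $q=(n+2m)/2>2m$.

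Next I would test \eqref{Whole} against $\varphi_\varepsilon$. Since $\varphi_\varepsilon\in C_c^\infty(\mathbb{R}^n\setminus\mathbb{R}^k)$, integration by parts is valid with no boundary contributions:
$$\int u^{(n+2m)/(n-2m)}\varphi_\varepsilon \, dx=\int u(-\Delta)^m\varphi_\varepsilon \, dx.$$
Combining $|\nabla^j\chi^q|\leq CR^{-j}\chi^{q-j}$ and $|\nabla^j\eta_\varepsilon^q|\leq C\varepsilon^{-j}\eta_\varepsilon^{q-j}$ (both valid for $j\leq 2m<q$) via Leibniz, and using AM-GM to dominate cross terms $R^{-j_1}\varepsilon^{-j_2}$ with $j_1+j_2=2m$ by $R^{-2m}+\varepsilon^{-2m}$, yields the pointwise estimate
$$|(-\Delta)^m\varphi_\varepsilon|\leq C\bigl(R^{-2m}\mathbf{1}_{B_{2R}\setminus B_R}+\varepsilon^{-2m}\mathbf{1}_{\mathcal{N}_{3\varepsilon}\cap B_{2R}}\bigr)\varphi_\varepsilon^{(n-2m)/(n+2m)}.$$
H\"older's inequality applied to each support piece separately, together with $\mathcal{L}^n(B_{2R}\setminus B_R)\leq CR^n$ and $\mathcal{L}^n(\mathcal{N}_{3\varepsilon}\cap B_{2R})\leq CR^k\varepsilon^{n-k}$ (the latter being the flat-manifold analogue of \eqref{Dim09}), gives a singular-set contribution of order $\varepsilon^{2m(n-2m-2k)/(n+2m)}$, whose exponent is $\geq 0$ precisely under the hypothesis $k\leq(n-2m)/2$. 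Young's inequality then absorbs the resulting $\bigl(\int u^{(n+2m)/(n-2m)}\varphi_\varepsilon\bigr)^{(n-2m)/(n+2m)}$ factor, and sending $\varepsilon\to 0$ delivers $\int_{B_R}u^{(n+2m)/(n-2m)}\,dx\leq C(R)<\infty$.

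For the distributional identity I would copy the final paragraph of the Proposition \ref{P201} proof essentially verbatim: for $\varphi\in C_c^\infty(\mathbb{R}^n)$ supported in some $B_R$, substitute $\psi_\varepsilon:=\varphi\eta_\varepsilon$ into \eqref{Whole}, write $(-\Delta)^m\psi_\varepsilon=\eta_\varepsilon(-\Delta)^m\varphi+F_\varepsilon$ with $|F_\varepsilon|\leq C\varepsilon^{-2m}\mathbf{1}_{\mathcal{N}_{3\varepsilon}\setminus\mathcal{N}_\varepsilon}$, and pass to the limit: dominated convergence handles the main terms via the local integrability just obtained, and the same H\"older/dimension count producing the $\varepsilon^{2m(n-2m-2k)/(n+2m)}$ factor shows $\int uF_\varepsilon\to 0$. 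The only real obstacle is bookkeeping --- carefully separating the support of $(-\Delta)^m\varphi_\varepsilon$ into an $R$-controlled piece and an $\varepsilon$-controlled piece so that the critical dimension inequality $k\leq(n-2m)/2$ enters cleanly, exactly as the Minkowski dimension inequality enters in Proposition \ref{P201}.
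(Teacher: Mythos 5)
Your proposal is correct and follows the paper's proof essentially line for line: the paper also tests \eqref{Whole} against $[(\varphi\eta_\varepsilon)]^q$ with a spatial cutoff $\varphi$ supported in $B_R$ and the same tubular cutoff $\eta_\varepsilon$ adapted to $\mathbb{R}^k$, uses H\"older on $B_R\cap(\mathcal{N}_{3\varepsilon}\setminus\mathcal{N}_\varepsilon)$ with the volume bound $\mathcal{L}^n(\mathcal{N}_{3\varepsilon}\cap B_R)\approx\varepsilon^{n-k}$ to produce the same critical exponent $\frac{4m(n-k)}{n+2m}-2m\geq 0\Leftrightarrow k\leq\frac{n-2m}{2}$, and then repeats the Proposition \ref{P201} argument for the distributional identity. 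The only cosmetic difference is the more explicit Leibniz/AM-GM bookkeeping for $(-\Delta)^m\varphi_\varepsilon$, which the paper leaves implicit; the mathematical content is identical.
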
 

\begin{proof}
The proof is similar to that of Proposition \ref{P201}, the only difference is that we have to choose an additional truncation function.  Let $R>0$ and take $\varphi\in C_c^\infty(\mathbb{R}^n)$ such that $\varphi=1$ on $B_{R/2}$ and $\varphi=0$ on $B_{R}^c$. Denote $\mathcal{N}_r:=\{x\in \mathbb{R}^n ~|~\textmd{ dist}(x, \mathbb{R}^k) < r\}$.    For small $\varepsilon >0$,  as in the proof of Proposition \ref{P201},  consider  
\begin{equation}\label{rho}
\eta_\varepsilon(x):= 1 - \int_{\mathcal{N}_{2\varepsilon}} \rho_\varepsilon(x-y) dy, 
\end{equation}
where $\rho\in C_c^\infty(B_1)$ with $\int_{B_1} \rho dx=1$ and $\rho_\varepsilon(x)=\frac{1}{\varepsilon^n} \rho(\frac{x}{\varepsilon})$. Then $\eta_\varepsilon \in C^\infty(\mathbb{R}^n)$ is a non-negative function  and satisfies \eqref{rho01} and \eqref{rho02}. Let $\varphi_\varepsilon(x) = [(\varphi\eta_\varepsilon)(x)]^q$ with $q=\frac{n+2m}{2}$. Multiplying both sides of \eqref{Whole} by $\varphi_\varepsilon$ and using integration by parts, we obtain 
$$
\aligned
\int_{\mathbb{R}^n} u^{\frac{n+2m}{n-2m}} \varphi_\varepsilon dx & = \int_{\mathbb{R}^n} u (-\Delta)^m \varphi_\varepsilon dx \\
& \leq C \int_{B_R} u (\varphi_\varepsilon)^{\frac{n-2m}{n+2m}} dx + C \varepsilon^{-2m} \int_{B_R \cap (\mathcal{N}_{3\varepsilon} \backslash \mathcal{N}_\varepsilon)}  u (\varphi_\varepsilon)^{\frac{n-2m}{n+2m}} dx\\
& \leq C \left( 1 + \varepsilon^{\frac{4m(n-k)}{n+2m} - 2m} \right) \left(\int_{\mathbb{R}^n} u^{\frac{n+2m}{n-2m}} \varphi_\varepsilon dx \right)^{\frac{n-2m}{n+2m}} \\
& \leq C \left(\int_{\mathbb{R}^n} u^{\frac{n+2m}{n-2m}} \varphi_\varepsilon dx \right)^{\frac{n-2m}{n+2m}}, 
\endaligned
$$
from which it follows that
$$
\int_{B_{R/2} \cap \mathcal{N}_{3\varepsilon}^c } u^{\frac{n+2m}{n-2m}}  dx \leq \int_{\mathbb{R}^n} u^{\frac{n+2m}{n-2m}} \varphi_\varepsilon dx \leq C. 
$$
By sending $\varepsilon\to 0$, we obtain
$$
\int_{B_{R/2} } u^{\frac{n+2m}{n-2m}}  dx \leq C. 
$$
Thus, $u\in L^{\frac{n+2m}{n-2m}}_{loc} (\mathbb{R}^n)$ since $R>0$ is arbitrary.  The proof of which $u$ is a distributional solution in $\mathbb{R}^n$ is very similar to that of  Proposition \ref{P201}, so we omit the details. 
\end{proof}   

Now we give some growth estimates for solutions of \eqref{Whole} at infinity. 
\begin{lemma}\label{Glo-P02}
Assume as in Proposition \ref{Glo-P01}. Let $u\in C^{2m}(\mathbb{R}^n \backslash \mathbb{R}^k)$ be a non-negative solution of \eqref{Whole}. Then 
\begin{equation}\label{Glo-E001}
\int_{\mathbb{R}^n} \frac{u(x)}{1 + |x|^{\gamma}} dx < +\infty ~~~~~~ \textmd{for} ~ \textmd{every}  ~ \gamma > \frac{n+2m}{2}
\end{equation}
and 
\begin{equation}\label{Glo-E002}
\int_{\mathbb{R}^n} \frac{u(x)^{\frac{n+2m}{n-2m}}}{1 + |x|^{\gamma}} dx < +\infty ~~~~~~ \textmd{for} ~ \textmd{every}  ~ \gamma > \frac{n - 2m}{2}. 
\end{equation}
\end{lemma}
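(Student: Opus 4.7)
The plan is to first establish a polynomial growth bound for $\int_{B_R} u^{\frac{n+2m}{n-2m}}\,dx$ and for $\int_{B_R} u\,dx$ as $R\to\infty$, and then deduce both weighted integrability statements by a dyadic annular decomposition. The key quantitative growth estimate I aim for is
\[
\int_{B_R} u^{\frac{n+2m}{n-2m}}\,dx \leq C R^{\frac{n-2m}{2}} \qquad \text{for all } R \geq 1.
\]

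To prove this, I would repeat the cutoff argument of Proposition \ref{Glo-P01}, but this time carefully tracking the dependence on the radius $R$. Choose $\varphi_R\in C_c^\infty(\mathbb{R}^n)$ with $\varphi_R\equiv 1$ on $B_{R/2}$, $\mathrm{supp}\,\varphi_R\subset B_R$ and $|\nabla^j\varphi_R|\leq CR^{-j}$; then use the test function $\psi_{R,\varepsilon}:=[(\varphi_R\eta_\varepsilon)]^{q}$ with $q=\frac{n+2m}{2m}$ in the distributional formulation from Proposition \ref{Glo-P01}. Applying H\"older's inequality with conjugate exponents $p=\frac{n+2m}{n-2m}$ and $p'=\frac{n+2m}{4m}$, and using that $(-\Delta)^m\psi_{R,\varepsilon}$ generates terms bounded by $C(R^{-2m}+\varepsilon^{-2m}\chi_{\mathcal{N}_{3\varepsilon}\setminus\mathcal{N}_\varepsilon})\psi_{R,\varepsilon}^{(q-2m)/q}$, one arrives at
\[
\int_{\mathbb{R}^n} u^{\frac{n+2m}{n-2m}}\psi_{R,\varepsilon}\,dx \leq C\bigl(R^{\frac{n-2m}{2}\cdot\frac{4m}{n+2m}} + o_\varepsilon(1)\bigr)\Bigl(\int_{\mathbb{R}^n}u^{\frac{n+2m}{n-2m}}\psi_{R,\varepsilon}\,dx\Bigr)^{\frac{n-2m}{n+2m}},
\]
where the $\varepsilon$-term vanishes as $\varepsilon\to 0$ exactly as in the proof of Proposition \ref{Glo-P01} (using $k\leq\frac{n-2m}{2}$). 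Sending $\varepsilon\to 0$ and rearranging yields the desired growth estimate. Then H\"older's inequality with the conjugate pair $(p,p')$ on $B_R$ gives
\[
\int_{B_R} u\,dx \leq \Bigl(\int_{B_R} u^{\frac{n+2m}{n-2m}}\,dx\Bigr)^{\frac{n-2m}{n+2m}}|B_R|^{\frac{4m}{n+2m}} \leq CR^{\frac{n+2m}{2}},
\]
after the simplification $\frac{(n-2m)^2}{2(n+2m)}+\frac{4mn}{n+2m}=\frac{n+2m}{2}$.

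With these two growth bounds in hand, the weighted integrability estimates follow by a routine dyadic decomposition. Writing $\mathbb{R}^n=B_1\cup\bigcup_{j\geq 0}A_j$ with $A_j=B_{2^{j+1}}\setminus B_{2^j}$, on each annulus $1+|x|^\gamma\geq c\,2^{j\gamma}$, so
\[
\int_{A_j}\frac{u^{\frac{n+2m}{n-2m}}}{1+|x|^\gamma}\,dx \leq C 2^{-j\gamma}\int_{B_{2^{j+1}}}u^{\frac{n+2m}{n-2m}}\,dx \leq C 2^{j(\frac{n-2m}{2}-\gamma)},
\]
which is summable whenever $\gamma>\frac{n-2m}{2}$, proving \eqref{Glo-E002}. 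The analogous estimate using the bound for $\int_{B_R}u\,dx$ yields \eqref{Glo-E001} whenever $\gamma>\frac{n+2m}{2}$, and the contribution on $B_1$ is finite because $u\in L^1(B_1)$ (from $L^{(n+2m)/(n-2m)}_{\mathrm{loc}}$ via H\"older).

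The main obstacle I anticipate is the careful bookkeeping in the first step: one must simultaneously regularize around the singular set $\mathbb{R}^k$ (via $\eta_\varepsilon$) and cut off at infinity (via $\varphi_R$), and then decouple the $\varepsilon\to 0$ and $R$-dependence so that the constant in the $R^{(n-2m)/2}$ bound is independent of $\varepsilon$. This is what makes the condition $k\leq\frac{n-2m}{2}$ essential: it is exactly what ensures that the $\varepsilon^{-2m}$ blow-up from $(-\Delta)^m\eta_\varepsilon$ is absorbed by the measure $\mathcal{L}^n(\mathcal{N}_{3\varepsilon}\setminus\mathcal{N}_\varepsilon)\lesssim\varepsilon^{n-k}$, so that the cutoff around $\mathbb{R}^k$ contributes nothing in the limit while the spatial cutoff $\varphi_R$ produces exactly the sharp exponent $(n-2m)/2$ needed to conclude.
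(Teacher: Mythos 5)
Your proposal is correct and reproduces the substance of what the paper does. The paper itself says almost nothing here: it cites Lemma 5.5 of \cite{AGHW} for \eqref{Glo-E001} and for the growth bound $\int_{B_R} u^{(n+2m)/(n-2m)}\,dy\leq CR^{n-\frac{n+2m}{2}}$ (note $n-\frac{n+2m}{2}=\frac{n-2m}{2}$, so this is identical to your target estimate), and then carries out exactly the dyadic annular decomposition you describe to pass from the growth bound to \eqref{Glo-E002}. What you add is a self-contained derivation of the growth bound by tracking the $R$-dependence in the test-function computation of Proposition \ref{Glo-P01} and deducing \eqref{Glo-E001} from it by H\"older; that is also what the cited lemma in \cite{AGHW} does, so the underlying mechanism is the same.

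Two small points to tidy. First, the exponent should be $q=\frac{n+2m}{2}$, not $q=\frac{n+2m}{2m}$: one needs $q-2m=q\cdot\frac{n-2m}{n+2m}$, i.e.\ $q\cdot\frac{4m}{n+2m}=2m$. Second, the $\varepsilon$-term in your key display is not $o_\varepsilon(1)$ at the borderline $k=\frac{n-2m}{2}$: with $\mathcal{L}^n\bigl(B_R\cap\mathcal{N}_{3\varepsilon}\bigr)\lesssim R^k\varepsilon^{n-k}$, the contribution is $O\bigl(R^{\frac{4mk}{n+2m}}\varepsilon^{\frac{4m(n-k)}{n+2m}-2m}\bigr)$, which for $k=\frac{n-2m}{2}$ has a zero exponent on $\varepsilon$ and so is only \emph{uniformly bounded}, not vanishing. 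Fortunately $R^{\frac{4mk}{n+2m}}\leq R^{\frac{2m(n-2m)}{n+2m}}$ for $k\leq\frac{n-2m}{2}$, which is exactly the power of $R$ already present from the $\varphi_R$ cutoff, so this term is absorbed and the final bound $CR^{\frac{n-2m}{2}}$ is unaffected. With these two corrections your argument is complete and correct.
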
 
\begin{proof}
The estimate \eqref{Glo-E001} follows from Lemma 5.5 in \cite{AGHW}. Next we show \eqref{Glo-E002}. By the proof of Lemma 5.5 in\cite{AGHW}, we have
\begin{equation}\label{RRR}
\int_{B_R} u(y)^{\frac{n+2m}{n-2m}} dy \leq C R^{n -\frac{n+2m}{2}}~~~~~~ \textmd{for} ~ \textmd{every} ~ R>0.
\end{equation}
Therefore, for every  $\gamma > \frac{n - 2m}{2}$,  using  \eqref{RRR}  we obtain 
$$
\aligned
\int_{\mathbb{R}^n} \frac{u(x)^{\frac{n+2m}{n-2m}}}{1 + |x|^{\gamma}} dx &= \int_{B_1} \frac{u(x)^{\frac{n+2m}{n-2m}}}{1 + |x|^{\gamma}} dx + \sum_{i=1}^\infty \int_{B_{2^i} \backslash B_{2^{i-1}}} \frac{u(x)^{\frac{n+2m}{n-2m}}}{1 + |x|^{\gamma}} dx \\
& \leq C  \int_{B_1} u(x)^{\frac{n+2m}{n-2m}} dx + \sum_{i=1}^\infty \int_{B_{2^i} \backslash B_{2^{i -1}}} u(x)^{\frac{n+2m}{n-2m}} dx  \cdot  2^{-\gamma(i-1)}\\
&\leq C + C \sum_{i=1}^\infty (2^i)^{\frac{n - 2m}{2} - \gamma}  < +\infty. 
\endaligned
$$
This completes the proof. 
\end{proof}

Let $0\leq k \leq \frac{n-2m}{2}$ and $u \in C^{2m} (\mathbb{R}^n \backslash \mathbb{R}^k)$ be a nonnegative  solution of \eqref{Whole}.  Then by  Lemma \ref{Glo-P02}  the following function
\begin{equation}\label{FS098}
v(x):=c_{n,m} \int_{\mathbb{R}^n} \frac{u(y)^{\frac{n+2m}{n-2m}}}{|x - y|^{n-2m}} dy
\end{equation}
is well-defined for every $x\in \mathbb{R}^n \backslash \mathbb{R}^k$, and it is continuous on $\mathbb{R}^n \backslash \mathbb{R}^k$.    In addition, for any $R>0$, we write $v$ as $v=v_{1,R} + v_{2,R}$,  where
$$
v_{1,R} (x) = \int_{B_{2R}} \frac{u(y)^{\frac{n+2m}{n-2m}}}{|x - y|^{n-2m}} dy ~~~~ \textmd{and}   ~~~~
v_{2,R} (x) = \int_{B_{2R}^c} \frac{u(y)^{\frac{n+2m}{n-2m}}}{|x - y|^{n-2m}} dy. 
$$
Since $u^{\frac{n+2m}{n-2m}} \in L^1(B_{2R})$,  we have  $v_{1,R} \in L^1(B_R)$.   From Lemma \ref{Glo-P02} we easily know $v_{2,R} \in L^\infty(B_R)$.  Hence we  obtain  $v\in L_{loc}^1(\mathbb{R}^n)$.  Define, for any $\gamma \in \mathbb{R}$, 
$$
L_\gamma(\mathbb{R}^n):= \left\{ u\in L^1_{loc}(\mathbb{R}^n) ~ \Big|  ~ \int_{\mathbb{R}^n} \frac{|u(x)|}{1+|x|^{n+2\gamma}} dx  < \infty \right\}. 
$$
Moreover, we   have the following property for $v$. 
\begin{lemma}\label{VVV}
Assume as in Proposition \ref{Glo-P01} and $v$ is defined by \eqref{FS098}.  Then we have $v\in L_0(\mathbb{R}^n)$. 
\end{lemma}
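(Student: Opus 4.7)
The plan is to reduce the problem to Lemma \ref{Glo-P02} via Fubini's theorem and a careful bound on an auxiliary kernel. We already know from the paragraph preceding the statement that $v\in L^1_{\text{loc}}(\mathbb{R}^n)$, so it suffices to show
$$
\int_{\mathbb{R}^n} \frac{v(x)}{1+|x|^n}\, dx < \infty.
$$
By Tonelli's theorem this equals
$$
c_{n,m}\int_{\mathbb{R}^n} u(y)^{\frac{n+2m}{n-2m}}\, K(y)\, dy,\qquad  K(y):=\int_{\mathbb{R}^n}\frac{dx}{(1+|x|^n)\,|x-y|^{n-2m}},
$$
so the task reduces to estimating $K(y)$ and then invoking \eqref{Glo-E002}.

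To bound $K(y)$ for $|y|\geq 1$, I would split $\mathbb{R}^n$ into four pieces: (i) $\{|x-y|<|y|/2\}$, (ii) $\{|x|<|y|/2\}$, (iii) $\{|y|/2\leq |x|\leq 2|y|,\ |x-y|\geq |y|/2\}$, and (iv) $\{|x|>2|y|\}$. On (i), $|x|\geq |y|/2$, so the factor $(1+|x|^n)^{-1}$ is $O(|y|^{-n})$ and the remaining integral of $|x-y|^{2m-n}$ over a ball of radius $|y|/2$ is $O(|y|^{2m})$, giving an $O(|y|^{2m-n})$ bound. On (ii), $|x-y|\geq |y|/2$, and integrating $(1+|x|^n)^{-1}$ over $\{|x|<|y|/2\}$ produces the log, yielding $O(|y|^{2m-n}\log(2+|y|))$. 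On (iii) both factors are controlled on an annulus of volume $\lesssim |y|^n$, which again gives $O(|y|^{2m-n})$. On (iv) one has $|x-y|\asymp |x|$, so the integrand is $\lesssim |x|^{2m-2n}$, which is integrable and yields $O(|y|^{2m-n})$. For $|y|\leq 1$ the same decomposition (or a direct argument based on integrability of $|x-y|^{2m-n}$ at the origin together with integrability of $(1+|x|^n)^{-1}$ at infinity) gives $K(y)\leq C$. Combining,
$$
K(y)\leq \frac{C\log(2+|y|)}{1+|y|^{n-2m}}\qquad \text{for every } y\in\mathbb{R}^n.
$$

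With this estimate in hand, choose any $\gamma\in\bigl(\tfrac{n-2m}{2},\,n-2m\bigr)$; such a $\gamma$ exists because $n>2m$. Then $\log(2+|y|)/(1+|y|^{n-2m})\leq C_\gamma/(1+|y|^\gamma)$, and Lemma \ref{Glo-P02} (namely \eqref{Glo-E002} with this $\gamma$) gives
$$
\int_{\mathbb{R}^n}\frac{v(x)}{1+|x|^n}\,dx\leq C\int_{\mathbb{R}^n}\frac{u(y)^{\frac{n+2m}{n-2m}}}{1+|y|^\gamma}\,dy<\infty,
$$
which is precisely the statement $v\in L_0(\mathbb{R}^n)$. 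The only delicate point is the appearance of the logarithm on region (ii); that is why we need some slack between $(n-2m)/2$ and $n-2m$, and having $1\leq m<n/2$ is exactly what provides it.
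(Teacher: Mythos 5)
Your proof is correct and follows essentially the same route as the paper: apply Fubini/Tonelli to reduce to bounding the kernel $K(y)=\int_{\mathbb{R}^n}\frac{dx}{(1+|x|^n)|x-y|^{n-2m}}$, split $\mathbb{R}^n$ into near/far regions relative to $0$ and $y$, obtain $K(y)\lesssim\log(2+|y|)/(1+|y|^{n-2m})$, and conclude via \eqref{Glo-E002}. The paper uses a three-piece decomposition by $|x|$ alone (with the ball around $y$ absorbed into its middle annulus) while you separate that ball into its own region, and you make explicit the choice of $\gamma\in(\frac{n-2m}{2},\,n-2m)$ that absorbs the logarithm before invoking \eqref{Glo-E002}, which the paper leaves implicit -- these are cosmetic differences only.
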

\begin{proof}
By  Fubini's theorem, we have
\begin{equation}\label{Fubi}
\int_{\mathbb{R}^n} \frac{v(x)}{1+|x|^{n}} dx =c_{n,m} \int_{\mathbb{R}^n} u(y)^{\frac{n+2m}{n-2m}} \left( \int_{\mathbb{R}^n} \frac{1}{|x-y|^{n-2m}} \frac{1}{1+|x|^n} dx\right) dy. 
\end{equation} 
If $|y| \leq 1$, then
$$
\aligned
\int_{\mathbb{R}^n} \frac{1}{|x-y|^{n-2m}} \frac{1}{1+|x|^n} dx & \leq \int_{B_3} \frac{1}{|x|^{n-2m}}dx + C \int_{B_2^c} \frac{1}{|x|^{n-2m}} \frac{1}{1+|x|^n} dx  \\ 
& \leq C <\infty.
\endaligned 
$$
If $|y| >1$, then
$$
\int_{\mathbb{R}^n} \frac{1}{|x-y|^{n-2m}} \frac{1}{1+|x|^n} dx  =: \sum_{i=1}^3 I_i,
$$
where
$$
I_1=\int_{ \{|x|\leq \frac{|y|}{2}\} }    \frac{1}{|x-y|^{n-2m}} \frac{1}{1+|x|^n}  dx \leq  \frac{C \ln (1 + |y|^n)}{|y|^{n-2m}},  
$$
$$
\aligned
I_2= \int_{\{ \frac{|y|}{2} < |x| < 2|y| \} }  \frac{1}{|x-y|^{n-2m}} \frac{1}{1+|x|^n} dx & \leq \frac{C}{|y|^{n}}\int_{\{ \frac{|y|}{2} < |x| < 2|y| \}}  \frac{1}{|x-y|^{n-2m}} dx \\ 
& \leq \frac{C}{|y|^{n-2m}},  
\endaligned
$$
and
$$
I_3=\int_{\{ |x|\geq  2|y| \}} \frac{1}{|x-y|^{n-2m}} \frac{1}{1+|x|^n} dx \leq C  \int_{\{ |x|\geq  2|y| \}} \frac{1}{|x|^{2n-2m}} dx \leq  \frac{C}{|y|^{n-2m}}. 
$$
All of these estimates together with \eqref{Fubi} and \eqref{Glo-E002} give
$$
\int_{\mathbb{R}^n} \frac{v(x)}{1+|x|^{n}} dx \leq C \int_{B_1} u(y)^{\frac{n+2m}{n-2m}} dy +C \int_{B_1^c}   \frac{\ln (1 + |y|^n) u(y)^{\frac{n+2m}{n-2m}} }{ |y|^{n-2m}} dy  <\infty. 
$$
Thus, we obtain $v \in L_0(\mathbb{R}^n)$. 
\end{proof}

Estimate \eqref{Glo-E002} and Lemma \ref{VVV} are an improvement of Lemma 5.4 and Lemma 5.6  in Ao-Gonz\'{a}lez-Hyder-Wei \cite{AGHW}, respectively, when the order  of the equation in \cite{AGHW}  is an integer.    Now we can show the integral representation for global singular  solutions of \eqref{Whole}. 
\begin{theorem}\label{T-Glo}
Suppose that $1 \leq m < n/2$ and $m$ is an integer. Let $0\leq k \leq \frac{n-2m}{2}$ and $u \in C^{2m} (\mathbb{R}^n \backslash \mathbb{R}^k)$ be a nonnegative  solution of \eqref{Whole}. Then $u\in L^{\frac{n+2m}{n-2m}}_{loc} (\mathbb{R}^n)$ and $u$  satisfies 
\begin{equation}\label{T-Glo098}
u(x) =c_{n,m} \int_{\mathbb{R}^n} \frac{u(y)^{\frac{n+2m}{n-2m}}}{|x-y|^{n-2m}} dy ~~~~~~\textmd{for} ~ x\in \mathbb{R}^n \backslash \mathbb{R}^k. 
\end{equation} 
\end{theorem}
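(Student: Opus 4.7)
The plan is to represent $u$ as the Riesz potential of its own nonlinearity plus a polyharmonic correction, and then invoke a Liouville-type argument in $L_0(\mathbb{R}^n)$ to show that the correction vanishes. By Proposition \ref{Glo-P01}, $u\in L^{\frac{n+2m}{n-2m}}_{\mathrm{loc}}(\mathbb{R}^n)$ and $u$ satisfies the equation on all of $\mathbb{R}^n$ distributionally. Define
$$v(x):=c_{n,m}\int_{\mathbb{R}^n}\frac{u(y)^{\frac{n+2m}{n-2m}}}{|x-y|^{n-2m}}\,dy.$$
By Lemma \ref{Glo-P02} and the paragraph following it, $v$ is well-defined and continuous on $\mathbb{R}^n\setminus\mathbb{R}^k$, while Lemma \ref{VVV} gives $v\in L_0(\mathbb{R}^n)$.

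Since $c_{n,m}|x|^{2m-n}$ is the fundamental solution of $(-\Delta)^m$ on $\mathbb{R}^n$, I would next use Fubini, justified by pairing the compact support of any test function $\varphi\in C_c^\infty(\mathbb{R}^n)$ with the tail bound \eqref{Glo-E002}, to deduce that $v$ is itself a distributional solution of $(-\Delta)^m v=u^{\frac{n+2m}{n-2m}}$ on $\mathbb{R}^n$. Setting $w:=u-v$, the difference is then distributionally polyharmonic on $\mathbb{R}^n$, hence smooth and classically polyharmonic by the elliptic regularity invoked in the proof of Proposition \ref{P202}. Note that $u\in L_0(\mathbb{R}^n)$ by applying \eqref{Glo-E001} with $\gamma=n$, which is admissible because $n>2m$ forces $n>(n+2m)/2$; combined with $v\in L_0(\mathbb{R}^n)$, this yields $w\in L_0(\mathbb{R}^n)$.

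The concluding step is a Liouville-type argument based on Pizzetti's mean value identity for polyharmonic functions of order $m$: for each $x_0\in\mathbb{R}^n$ and $R>0$,
$$\frac{1}{|B_R(x_0)|}\int_{B_R(x_0)}w(y)\,dy=\sum_{s=0}^{m-1}c_s R^{2s}(-\Delta)^s w(x_0),$$
a polynomial in $R$ of degree at most $2m-2$. On the other hand, the $L_0$ membership yields
$$\left|\frac{1}{|B_R(x_0)|}\int_{B_R(x_0)}w(y)\,dy\right|\leq\frac{1+(|x_0|+R)^n}{|B_R(x_0)|}\int_{\mathbb{R}^n}\frac{|w(y)|}{1+|y|^n}\,dy\leq C$$
uniformly for $R\geq\max(|x_0|,1)$ and all $x_0\in\mathbb{R}^n$. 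Since a polynomial in $R$ that remains bounded as $R\to\infty$ must be constant in $R$, we obtain $(-\Delta)^s w(x_0)=0$ for $s=1,\dots,m-1$ at every $x_0$, making $w$ harmonic on $\mathbb{R}^n$. Applying the mean value identity again (now with $m=1$) together with the same uniform bound shows $w$ is bounded on $\mathbb{R}^n$; the classical Liouville theorem then forces $w$ to be constant, and the only constant in $L_0(\mathbb{R}^n)$ is zero. Hence $w\equiv 0$ and \eqref{T-Glo098} follows.

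The main obstacle I anticipate is verifying the distributional identity $(-\Delta)^m v=u^{\frac{n+2m}{n-2m}}$ on all of $\mathbb{R}^n$ via Fubini, which requires carefully combining the local integrability of $u^{\frac{n+2m}{n-2m}}$ from Proposition \ref{Glo-P01} with the tail estimate \eqref{Glo-E002} to control the iterated integral when the test function has support far from the origin. The remaining steps, namely Pizzetti's formula and the classical Liouville application, are essentially standard once the ingredients are in place.
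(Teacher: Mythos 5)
Your proposal is correct and follows the same route as the paper: decompose $u = v + w$ where $v$ is the Riesz potential of $u^{\frac{n+2m}{n-2m}}$, use Proposition \ref{Glo-P01} together with the fundamental-solution property to make $w$ distributionally polyharmonic (hence smooth and classically polyharmonic), place $w\in L_0(\mathbb{R}^n)$ via Lemmas \ref{Glo-P02} and \ref{VVV}, and conclude $w\equiv 0$ by a Liouville-type argument. The only place you diverge is the final step: the paper simply cites a Liouville theorem for polyharmonic functions in $L_0(\mathbb{R}^n)$ from \cite{AGHW,Dis-book}, whereas you supply a self-contained proof via Pizzetti's mean-value identity, first killing $(-\Delta)^s w$ for $s=1,\dots,m-1$ by comparing the polynomial growth in $R$ against the uniform $O(1)$ bound coming from $w\in L_0$, then showing the remaining harmonic $w$ is bounded and hence constant, and finally that the only constant in $L_0(\mathbb{R}^n)$ is zero. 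That unpacking is valid and makes the argument more elementary; otherwise the two proofs are the same.
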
 
\begin{proof}
Our proof is inspired by that of Theorem 1.8 in \cite{AGHW}, where the superharmonicity property for the fractional Laplacian equations in $\mathbb{R}^n$ was showed.     Define $v$ as in \eqref{FS098}. Then $v\in L_{loc}^1(\mathbb{R}^n)$ and it  satisfies
\begin{equation}\label{Dis-V0}
(-\Delta)^m v =u^{\frac{n+2m}{n-2m}} ~~~~~~ \textmd{in} ~ \mathbb{R}^n
\end{equation}
in the distributional sense, i.e., for any $\varphi \in C_c^\infty(\mathbb{R}^n)$, 
$$
\int_{\mathbb{R}^n} v (-\Delta)^m \varphi dx= \int_{\mathbb{R}^n} u^{\frac{n+2m}{n-2m}} \varphi dx. 
$$
Let $w=u-v$. Then,  using Proposition \ref{Glo-P01} we have  that $(-\Delta)^m w =0$ in $\mathbb{R}^n$ in the distributional sense.  By the regularity of polyharmonic functions (see, e.g., \cite{Dis-book}), $w\in C^\infty(\mathbb{R}^n)$ and $(-\Delta)^m w \equiv 0$ in the classical sense.   On the other hand, from Lemmas \ref{Glo-P02} and \ref{VVV} we know  that both $u$ and $v$ belong to $L_0(\mathbb{R}^n)$ and hence $w \in L_0(\mathbb{R}^n)$.  It follows from a Liouville type theorem (see, e.g.,  \cite{AGHW,Dis-book}) that $w \equiv 0$ in $\mathbb{R}^n$.  Thus, we have proved that 
$$
u(x)=c_{n,m} \int_{\mathbb{R}^n} \frac{u(y)^{\frac{n+2m}{n-2m}}}{|x-y|^{n-2m}} dy,~~~~~ \textmd{for} ~ \mathcal{L}^n ~ a.e. ~ x \in \mathbb{R}^n. 
$$
Recall  that $u, v \in C(\mathbb{R}^n \backslash \mathbb{R}^k)$. Therefore \eqref{T-Glo098} holds for any  $x\in \mathbb{R}^n \backslash \mathbb{R}^k$.   
\end{proof}

\section{Local estimate near a singular set}\label{S3000} 
In this section, we first prove Theorem \ref{IEthm01} and then use it and the local integral representation in Subsection \ref{S2-01} to show Theorem \ref{Thm01}. 

For $x\in \mathbb{R}^n, \lambda >0$ and a function $u$,  we denote
$$
\xi^{x, \lambda}= x+ \frac{\lambda^2 (\xi - x)}{|\xi - x|^2} ~~~  \textmd{for} ~  \xi\neq x, ~~~~~~\Omega^{x,\lambda} = \{\xi^{x, \lambda}, \xi\in \Omega\},
$$
and 
$$
u_{x, \lambda}(\xi) = \left( \frac{\lambda}{|\xi - x|} \right)^{n-2\sigma} u(\xi^{x, \lambda}). 
$$
Note that $(\xi^{x,\lambda})^{x, \lambda} =\xi$ and $(u_{x,\lambda})_{x,\lambda} = u$. If $x=0$, we use the notation  $u_\lambda=u_{0,\lambda}$. 

Suppose $u \in L^{\frac{n+2\sigma}{n-2\sigma}}(B_2) \cap C(B_2 \backslash \Sigma)$  is a positive solution of \eqref{Int},   and suppose  $h\in C^1(B_2)$ is a positive function satisfying 
$$ 
|\nabla \ln h| \leq \tilde{C}  ~~~~~~ \textmd{in} ~ B_{3/2} 
$$
for some constant $\tilde{C} >0$.   If we extend $u$ to be identically $0$ outside $B_2$, then we have
\begin{equation}\label{A-01}
u(x) = \int_{\mathbb{R}^n} \frac{u(y)^{\frac{n+2\sigma}{n-2\sigma}}}{|x - y|^{n-2\sigma}} dy  + h(x)~~~~~~ \textmd{for}  ~ x\in B_2 \backslash \Sigma. 
\end{equation}
Recalling  the following two identities (see, e.g., \cite{Li04}), 
\begin{equation}\label{Id-01}
\left( \frac{\lambda}{|\xi - x|} \right)^{n-2\sigma} \int_{|z-x|\geq \lambda}  \frac{u(z)^{\frac{n+2\sigma}{n-2\sigma}}}{|\xi^{x,\lambda} - z|^{n-2\sigma}} dz = \int_{|z-x| \leq \lambda} \frac{u_{x,\lambda}(z)^{\frac{n+2\sigma}{n-2\sigma}}}{|\xi - z|^{n-2\sigma}} dz
\end{equation}
and
\begin{equation}\label{Id-02}
\left( \frac{\lambda}{|\xi - x|} \right)^{n-2\sigma} \int_{|z-x|\leq \lambda}  \frac{u(z)^{\frac{n+2\sigma}{n-2\sigma}}}{|\xi^{x,\lambda} - z|^{n-2\sigma}} dz = \int_{|z-x| \geq \lambda} \frac{u_{x,\lambda}(z)^{\frac{n+2\sigma}{n-2\sigma}}}{|\xi - z|^{n-2\sigma}} dz,
\end{equation}
one has
\begin{equation}\label{ABC-01}
u_{x,\lambda}(\xi) = \int_{\mathbb{R}^n} \frac{u_{x,\lambda}(z)^{\frac{n+2\sigma}{n-2\sigma}}}{|\xi - z|^{n-2\sigma}} dz  + h_{x,\lambda}(\xi)~~~~~~ \textmd{for}  ~  \xi \in (B_2 \backslash \Sigma)^{x, \lambda}. 
\end{equation}
Thus, for any $x\in B_1$ and $\lambda <1$, we have  for any  $\xi \in B_2 \backslash \left( \Sigma \cup \Sigma^{x,\lambda} \cup B_\lambda(x) \right)$ that 
$$
u(\xi) -u_{x,\lambda}(\xi) =\int_{|z-x|\geq \lambda} K(x, \lambda; \xi, z) \big[ u(z)^{\frac{n+2\sigma}{n-2\sigma}} - u_{x,\lambda}(z)^\frac{n+2\sigma}{n-2\sigma} \big]dz + h_{x,\lambda}(\xi) - h(\xi), 
$$
where
$$
K(x, \lambda; \xi, z)=\frac{1}{|\xi -z|^{n-2\sigma}} - \left(\frac{\lambda}{|\xi - x|}  \right)^{n-2\sigma} \frac{1}{|\xi^{x,\lambda} -z|^{n-2\sigma}}. 
$$
It is elementary to check that 
$$
K(x, \lambda; \xi, z) >0~~~~~ \textmd{for} ~ \textmd{all} ~ |\xi -x|, |z-x| >\lambda>0. 
$$

Next, we shall  prove Theorem \ref{IEthm01} using  the method of moving spheres introduced by Li and Zhu \cite{Li-Zhu,Li04} and some blow up arguments developed in Jin-Li-Xiong \cite{JLX17}. 

\vskip0.10in

\noindent{\it Proof of Theorem \ref{IEthm01}. }   Suppose by contradiction that there exists a sequence $\{x_j\}_{j=_1}^\infty \subset B_1 \backslash \Sigma$ such that 
$$
d_j:=\textmd{dist} (x_j, \Sigma) \to 0~~~~~ \textmd{as}  ~ j\to \infty, 
$$
but
$$
d_j^{\frac{n-2\sigma}{2}} u(x_j) \to \infty~~~~~ \textmd{as}  ~ j\to \infty. 
$$
We may assume, without  loss of generality,  that $0\in \Sigma$ and $x_j\to 0$ as $j\to \infty$. 

Consider 
$$
v_j(x) := \left( \frac{d_j}{2} - |x-x_j| \right)^{\frac{n-2\sigma}{2}} u(x),~~~~~~|x - x_j| \leq \frac{d_j}{2}. 
$$
Since $u$ is positive and continuous in $\overline{B}_{d_j /2}(x_j)$, we can find a point $\bar{x}_j \in \overline{B}_{d_j/2 }(x_j)$ satisfying  
$$
v_j(\bar{x}_j) = \max_{|x -x_j| \leq \frac{d_j}{2}} v_j(x) >0. 
$$
Let $2\mu_j:= \frac{d_j}{2} - |\bar{x}_j - x_j|$.   Then
$$
0 < 2\mu_j \leq \frac{d_j}{2}~~~~~~  \textmd{and} ~~~~~~ \frac{d_j}{2} - |x-x_j| \geq \mu_j~~~~\forall ~  |x-\bar{x}_j| \leq \mu_j. 
$$
By the definition of $v_j$, we have
\begin{equation}\label{SLo3-000} 
(2 \mu_j)^{\frac{n-2\sigma}{2}} u( \bar{x}_j ) = v_j(\bar{x}_j) \geq v_j(x) \geq \mu_j^{\frac{n-2\sigma}{2}}u(x)~~~~\forall ~ |x-\bar{x}_j| \leq \mu_j. 
\end{equation}
Hence, we have
\begin{equation}\label{SLo3-01} 
2^{\frac{n-2\sigma}{2}} u( \bar{x}_j ) \geq u(x)~~~~~~ \forall ~ |x-\bar{x}_j| \leq \mu_j. 
\end{equation}
We also have
\begin{equation}\label{SLo3-02}
(2 \mu_j)^{\frac{n-2\sigma}{2}} u( \bar{x}_j ) = v_j(\bar{x}_j) \geq v_j(x_j) = \left( \frac{d_j}{2} \right)^{\frac{n-2\sigma}{2}} u(x_j) \to \infty ~~~~\textmd{as}  ~ j\to \infty. 
\end{equation} 
Now, define
$$
w_j(y)=\frac{1}{u(\bar{x}_j)} u \left( \bar{x}_j + \frac{y}{u(\bar{x}_j)^{\frac{2}{n-2\sigma}}} \right), ~h_j(y)=\frac{1}{u(\bar{x}_j)} h \left( \bar{x}_j + \frac{y}{u(\bar{x}_j)^{\frac{2}{n-2\sigma}}} \right)~  \textmd{in}~ \Omega_j, 
$$
where 
$$
\Omega_j =\left\{y\in \mathbb{R}^n :   \bar{x}_j + \frac{y}{u(\bar{x}_j)^{\frac{2}{n-2\sigma}}} \in  B_2 \backslash \Sigma \right\}. 
$$
We extend $w_j$ to be $0$ outside of $\Omega_j $.  Then $w_j$ satisfies $w_j(0)=1$ and 
\begin{equation}\label{Swj01}
w_j(y) = \int_{\mathbb{R}^n} \frac{w_j(z)^{\frac{n+2\sigma}{n-2\sigma}}}{|y -z |^{n-2\sigma}} dz + h_j(y)~~~~~
\textmd{for} ~ y\in \Omega_j. 
\end{equation}
Moreover, it follows from \eqref{SLo3-01} and  \eqref{SLo3-02} that  
$$
\|h_j\|_{C^1(B_{R_j})} \to 0, ~~~~~ w_j(y) \leq 2^{\frac{n-2\sigma}{2}}~~~ \textmd{in} ~ B_{R_j}, 
$$
where 
$$
R_j := \mu_j u(\bar{x}_j)^{\frac{2}{n-2\sigma}} \to \infty ~~~ \textmd{as} ~ j \to \infty. 
$$

{\it Claim 1: } There exists a function $w>0$ such  that, after passing to a subsequence, $ w_j \to w$ in $C_{loc}^\alpha(\mathbb{R}^n)$
for some $\alpha >0$ and $w$ satisfies   
\begin{equation}\label{S3-W1}
w(y) = \int_{\mathbb{R}^n} \frac{w(z)^{\frac{n+2\sigma}{n-2\sigma}}}{|y -z |^{n-2\sigma}} dz~~~~~ \textmd{for} ~ y \in \mathbb{R}^n. 
\end{equation}

Since for any $R>0$ we have  $w_j(y)\leq 2^\frac{n-2\sigma}{2}$ in $B_R$ for all large $j$, by the regularity results in  Section 2.1  of \cite{JLX17}  there exists $w\geq 0$ such that,  after passing to a subsequence if necessary,   
$$
w_j \to w~~~~~ \textmd{in}  ~C_{loc}^{\alpha}(\mathbb{R}^n)
$$
for some $\alpha>0$.  Clearly   $w(0)=1$.    
To prove that $w$ satisfies the integral equation \eqref{S3-W1},  we follow the argument of Proposition 2.9 in  \cite{JLX17}.    Write \eqref{Swj01} as  
$$
w_j(y)=\int_{B_R} \frac{w_j(z)^{\frac{n+2\sigma}{n-2\sigma}}}{|y -z |^{n-2\sigma}} dz + h_j(R,y)~~~~~
\textmd{for} ~ y\in \Omega_j, 
$$
where
$$
h_j(R,y)=\int_{B_R^c} \frac{w_j(z)^{\frac{n+2\sigma}{n-2\sigma}}}{|y -z |^{n-2\sigma}} dz + h_j(y).
$$
Then,  for $y\in B_{R/2}$  we have 
$$
\aligned
h_j(R,y)&=\int_{B_R^c} \frac{|z|^{n-2\sigma}}{|y-z|^{n-2\sigma}}\frac{w_j(z)^{\frac{n+2\sigma}{n-2\sigma}}}{|z |^{n-2\sigma}} dz + h_j(y)\\
&\leq C \int_{B_R^c} \frac{w_j(z)^{\frac{n+2\sigma}{n-2\sigma}}}{|z |^{n-2\sigma}} dz + \|h_j\|_{L^\infty(B_{R/2})}\\
&\leq  C w_j(0) + \|h_j\|_{L^\infty(B_{R/2})}  
\endaligned
$$
for all large $j$.  Similarly,  for $y\in B_{R/2}$,
$$
|\nabla h_j(R,y)|\leq  C(R)  w_j(0) + \|\nabla h_j\|_{L^\infty(B_{R/2})}. 
$$
From these we get $\|h_j(R,\cdot)\|_{C^1(B_{R/2})}\leq C(R)$ for all $j$ large.  Thus,  after passing to a subsequence, $h_j(R,\cdot)\to h(R,\cdot)$ in $C^{1/2}(B_{R/2})$. Therefore, 
\begin{equation}\label{hRywyint}
h(R,y)=w(y)-\int_{B_R} \frac{w(z)^{\frac{n+2\sigma}{n-2\sigma}}}{|y -z |^{n-2\sigma}} dz
\end{equation}
for $y\in B_{R/2}$. Moreover, $h(R,y)$ is nonnegative and non-increasing in $R$. Notice that when $R>>|y|$,
$$
\aligned
\frac{R^{n-2\sigma}}{(R+|y|)^{n-2\sigma}}(h_j(R,0)-h_j(0))&\leq h_j(R,y)-h_j(y)\\
&\leq \frac{R^{n-2\sigma}}{(R-|y|)^{n-2\sigma}}(h_j(R,0)-h_j(0)).
\endaligned
$$
Let $j$ tend to $\infty$, we get
$$
\frac{R^{n-2\sigma}}{(R+|y|)^{n-2\sigma}}h(R,0)\leq h(R,y)\\
\leq \frac{R^{n-2\sigma}}{(R-|y|)^{n-2\sigma}}h(R,0), 
$$
which implies that $\lim_{R\to\infty}h(R,y)=\lim_{R\to\infty}h(R,0)=:c_0\geq  0$. Let $R$ tend to $\infty$ in \eqref{hRywyint}, from Lebesgue's  monotone convergence theorem,
$$
w(y)=\int_{\R^n} \frac{w(z)^{\frac{n+2\sigma}{n-2\sigma}}}{|y -z |^{n-2\sigma}} dz+c_0~~~~~ \textmd{for} ~ y \in \mathbb{R}^n. 
$$
We claim that $c_0=0$. If not, then $w(y)\geq c_0$ for all $y\in\R^n$ and thus 
$$
1=w(0)\geq \int_{\R^n} \frac{c_0^{\frac{n+2\sigma}{n-2\sigma}}}{|z |^{n-2\sigma}} dz =\infty.
$$
This is impossible.  Claim 1 is proved.

Since $w(0)=1$,  by the classification results in \cite{Li04} or \cite{CLO06}, we have
\begin{equation}\label{Class301}
w(y) = \left( \frac{1+ \mu^2 |y_0|^2 }{1+ \mu^2|y - y_0|^2 } \right)^{\frac{n-2\sigma}{2}}
\end{equation}
for some $\mu >0$ and some $y_0 \in \mathbb{R}^n$.

On the other hand,  we will  show that,   for every  $\lambda >0$, 
\begin{equation}\label{1-Local3}
w_\lambda(y) \leq w(y)~~~~~~ \forall ~  |y| \geq \lambda. 
\end{equation}
By the proof of Theorem 1.1 in \cite{Li04}, \eqref{1-Local3} implies that $w \equiv constant$. This contradicts to  \eqref{Class301}.

Let us fix $\lambda_0 >0$ arbitrarily. Then for all $j$ large, we have $0< \lambda_0 < \frac{R_j}{10}$. Denote
$$
\Xi_j:=  \left\{y\in \mathbb{R}^n :   \bar{x}_j + \frac{y}{u(\bar{x}_j)^{\frac{2}{n-2\sigma}}} \in  B_{1} \backslash \Sigma \right\} \subset \Omega_j. 
$$
We are going to show that for all sufficiently large $j$,
\begin{equation}\label{2-Local3}
(w_j)_{\lambda_0}(y) \leq w_j(y)~~~~~~ \forall ~ |y| \geq \lambda_0,  ~  y\in \Xi_j. 
\end{equation}
Then \eqref{1-Local3} follows from \eqref{2-Local3} by sending $j\to \infty$.  

It follows from the same arguments as in Lemma 3.1 of \cite{JX19}  that there exists a constant $\bar{r}>0$ depending only on $n$, $\sigma$, $\tilde{C}$ and $\lambda_0$  such that for all large $j$ with $u(\bar{x}_j)^{-\frac{2}{n-2\sigma}} < \bar{r}$ there holds
\begin{equation}\label{3-Local3}
(h_j)_{\lambda}(y) \leq h_j(y)~~~~~~ \forall ~  y\in \Xi_j \backslash B_\lambda,  ~ 0< \lambda \leq \lambda_0+\frac{1}{2}. 
\end{equation}

{\it Claim 2:}  There exists a real  number $\lambda_1>0$ independent of (large) $j$ such that for every $0< \lambda<\lambda_1$, we have
$$
(w_j)_{\lambda}(y) \leq w_j(y)~~~~~~ \textmd{in}~  \Xi_j \backslash B_\lambda. 
$$

Since $w_j \to w$ locally uniformly and $w$ is given in \eqref{Class301}, we have that $w_j \geq c_0>0$ on $B_1$ for all $j$ sufficiently large.  On the other hand, from the equation \eqref{Swj01} and the regularity results in \cite{JLX17}  we know that $|\nabla w_j| \leq C_0<\infty$ on $B_1$ for all $j$ sufficiently large.  By  the proof of Lemma 3.1 in \cite{JX19} (see ($20$) there), there exists a $r_0>0$ independent of (large) $j$ such that for all $0< \lambda \leq r_0$
\begin{equation}\label{4-Local3}
(w_j)_{\lambda}(y) \leq w_j(y), ~~~~~~  0< \lambda < |y| \leq r_0. 
\end{equation}
Since  $w_j \geq c_0 > 0$  on $B_1$ for all $j$ sufficiently large, we also have 
$$
w_j(y) \geq  c_0^{\frac{n+2\sigma}{n-2\sigma}}  \int_{B_1} |y - z|^{2\sigma-n} dz \geq \frac{1}{C} (1+|y|)^{2\sigma-n} ~~~~~~ \textmd{in}~ \Omega_j
$$
for some constant $C>0$.  Therefore, we can find a small $0< \lambda_1\leq r_0$ independent of (large) $j$ such that for every $0< \lambda< \lambda_1$
$$
(w_j)_{\lambda}(y) \leq \left( \frac{\lambda_1}{|y|} \right)^{n-2\sigma} \max_{B_{r_0}} w_j \leq C\left( \frac{\lambda_1}{|y|} \right)^{n-2\sigma} \leq w_j(y)   ~~~~ \textmd{for}~ \textmd{all} ~  y \in \Omega_j \backslash B_{r_0}.  
$$
Together with \eqref{4-Local3},   Claim 2 is proved. 

We define
$$
\bar{\lambda}_j:= \sup\{ 0< \mu \leq \lambda_0 ~ | ~ (w_j)_{\lambda}(y) \leq w_j(y),~~ \forall  ~ |y| \geq \lambda,~ y\in \Xi_j,~ \forall ~ 0< \lambda < \mu\}, 
$$
where $\lambda_0$ is fixed at the beginning. By Claim 2, $\bar{\lambda}_j$ is well defined and $\bar{\lambda}_j  \geq \lambda_1>0$ for all  sufficiently large $j$. 

\vskip0.1in

{\it Claim 3:} $\bar{\lambda}_j =\lambda_0$ for all  sufficiently large $j$. 
\vskip0.1in

By \eqref{Id-01} and \eqref{Id-02}, we have  for any $\bar{\lambda}_j \leq \lambda\leq \bar{\lambda}_j+\frac{1}{2}$ and $y\in \Xi_j \backslash B_\lambda$ that 
\begin{equation}\label{y-01}
\aligned
& w_j(y) - (w_j)_\lambda(y) \\
& ~ =\int_{B_\lambda^c} K(0,\lambda; y, z) \left( w_j(z)^{\frac{n+2\sigma}{n-2\sigma}} - (w_j)_\lambda(z)^{\frac{n+2\sigma}{n-2\sigma}} \right) dz + h_j(y) - (h_j)_\lambda(y) \\
&~ \geq \int_{\Xi_j \backslash B_\lambda} K(0,\lambda; y, z) \left( w_j(z)^{\frac{n+2\sigma}{n-2\sigma}} - (w_j)_\lambda(z)^{\frac{n+2\sigma}{n-2\sigma}} \right) dz + J(\lambda, w_j, y),
\endaligned
\end{equation}
where we have used \eqref{3-Local3} and  
\begin{equation}\label{y-02}
\aligned
J(\lambda, w_j, y)  & = \int_{ \mathbb{R}^n \backslash \Xi_j } K(0,\lambda; y, z) \left( w_j(z)^{\frac{n+2\sigma}{n-2\sigma}} - (w_j)_\lambda(z)^{\frac{n+2\sigma}{n-2\sigma}} \right) dz \\
& = \int_{ \Omega_j \backslash \Xi_j } K(0,\lambda; y, z) \left( w_j(z)^{\frac{n+2\sigma}{n-2\sigma}} - (w_j)_\lambda(z)^{\frac{n+2\sigma}{n-2\sigma}} \right) dz\\
& ~~~~  - \int_{ \Omega_j^c } K(0,\lambda; y, z) (w_j)_\lambda(z)^{\frac{n+2\sigma}{n-2\sigma}}  dz. 
\endaligned
\end{equation}
Let 
$$
\Sigma_j:=\left\{y\in \mathbb{R}^n :   \bar{x}_j + \frac{y}{u(\bar{x}_j)^{\frac{2}{n-2\sigma}}} \in  \Sigma \right\}. 
$$
Then $\mathcal{L}^n (\Sigma_j)=0$.   For any $z\in \mathbb{R}^n \backslash (\Xi_j \cup \Sigma_j)$  and $\bar{\lambda}_j \leq \lambda \leq \bar{\lambda}_j +1 $, we have $|z| \geq \frac{1}{2} u(\bar{x}_j)^{\frac{2}{n-2\sigma}}$ and thus
$$
(w_j)_\lambda(z) \leq \left( \frac{\lambda}{|z|} \right)^{n-2\sigma} \max_{B_{\lambda_0 +1}} w_j \leq C u(\bar{x}_j)^{-2}. 
$$
By the equation \eqref{Int}, we have
\begin{equation}\label{Esi-301}
u(x) \geq 4^{2\sigma-n} \int_{B_2} u(y)^{\frac{n+2\sigma}{n-2\sigma}} dy=:c_1> 0 ~~~~~ \textmd{for} ~ \textmd{all}  ~ x\in B_2 \backslash \Sigma,
\end{equation} 
and using the definition of $w_j$, we obtain
\begin{equation}\label{y-03}
w_j(y) \geq \frac{c_1}{u(\bar{x}_j)} ~~~~~~ \textmd{in} ~ \Omega_j \backslash \Xi_j. 
\end{equation}
Therefore, for large $j$, 
$$
w_j(z)^{\frac{n+2\sigma}{n-2\sigma}} - (w_j)_\lambda(z)^{\frac{n+2\sigma}{n-2\sigma}} \geq \frac{1}{2} w_j(z)^{\frac{n+2\sigma}{n-2\sigma}} ~~~~~~ \textmd{in} ~ \Omega_j \backslash \Xi_j. 
$$
Now, we claim that
\begin{equation}\label{y-05}
\aligned
J(\lambda, w_j, y)  & \geq \frac{1}{2}\left( \frac{c_1}{u(\bar{x}_j)} \right)^{\frac{n+2\sigma}{n-2\sigma}} \int_{\Omega_j \backslash \Xi_j} K(0,\lambda;y,z)dz - C \int_{\Omega_j^c} K(0,\lambda;y,z) \left(  \frac{\lambda}{|z|}\right)^{n+2\sigma} dz \\
& \geq 
\begin{cases}
C (|y| - \lambda) u(\bar{x}_j)^{-1}, ~~~~~   &\textmd{if}~  \lambda \leq |y| \leq \bar{\lambda}_j   +1, \\
C   u(\bar{x}_j)^{-1}, ~~~~~ & \textmd{if}~ |y| > \bar{\lambda}_{j} +1,  ~ y \in \Xi_j, 
\end{cases}
\endaligned
\end{equation}
where $C$ is a positive constant. 

Indeed, since $K(0,\lambda;y,z) =0$ for  $|y|=\lambda$ and 
$$
y\cdot \nabla_y K(0,\lambda;y,z) \Big|_{|y|=\lambda} = (n-2\sigma) |y-z|^{2\sigma-n-2}(|z|^2 - |y|^2) >0
$$
for $|z| \geq \bar{\lambda}_j +2$, and using the positivity and smoothness of $K$ we obtain
\begin{equation}\label{K011}
\frac{\delta_1}{|y -z|^{n-2\sigma}} (|y| - \lambda) \leq K(0,\lambda; y, z) \leq  \frac{\delta_2}{|y -z|^{n-2\sigma}} (|y| - \lambda)
\end{equation}
for $\bar{\lambda}_j \leq \lambda  \leq |y| \leq \bar{\lambda}_j+1$, $\bar{\lambda}_j+2 \leq |z| \leq  M <\infty$, where the positive constants $\delta_1$ and $\delta_2$ are independent of (large) $j$.  Moreover,   if $M$ is large, then 
$$
0< c_2 \leq y \cdot \nabla_y( |y-z|^{n-2\sigma} K(0,\lambda;y,z)) \leq C_2 < \infty
$$
for all  $|z|\geq M $,  $\bar{\lambda}_j \leq \lambda \leq |y| \leq \bar{\lambda}_j +1$. 
Hence,  \eqref{K011} also holds for $\bar{\lambda}_j \leq \lambda \leq |y| \leq \bar{\lambda}_j +1$, $|z|\geq M$.
On the other hand, by the definition of $K(0,\lambda;y,z)$, we can verify that for $|y|\geq \bar{\lambda}_j +1$ and $|z| \geq \bar{\lambda}_j +2$,
\begin{equation}\label{K022}
\frac{\delta_3}{|y -z|^{n-2\sigma}}  \leq K(0,\lambda; y, z) \leq  \frac{1}{|y -z|^{n-2\sigma}} 
\end{equation}
for some $\delta_3\in (0, 1)$ independent of (large) $j$.

Denote $\tau_j:=u(\bar{x}_j)^{\frac{2}{n-2\sigma}} $.  Then  for all  sufficiently  large $j$, $\lambda\leq |y| \leq \bar{\lambda}_j +1$ (recall that $\lambda \leq \bar{\lambda}_j  +\frac{1}{2}$)  we have
$$
\aligned
J(\lambda, w_j, y) & \geq \frac{1}{2}\left( \frac{c_1}{u(\bar{x}_j)} \right)^{\frac{n+2\sigma}{n-2\sigma}} \int_{\Omega_j \backslash \Xi_j}  \frac{\delta_1}{|y -z|^{n-2\sigma}} (|y| - \lambda)  dz \\
&~~~~~ - C \int_{\Omega_j^c} \frac{\delta_2}{|y -z|^{n-2\sigma}} (|y| - \lambda)  \left(  \frac{\lambda}{|z|}\right)^{n+2\sigma} dz \\
& \geq C (|y| - \lambda)  u(\bar{x}_j)^{-\frac{n+2\sigma}{n-2\sigma}}  \int_{\{\frac{5}{4} \tau_j \leq  |z| \leq \frac{7}{4}\tau_j \} \backslash \Sigma_j} \frac{1}{|y -z|^{n-2\sigma}}  dz \\
& ~~~~~ -C (|y| - \lambda) \int_{\{ |z| \geq \frac{7}{4}\tau_j \} \cup \Sigma_j}  \frac{1}{|y -z|^{n-2\sigma}} \left( \frac{1}{|z|}\right)^{n+2\sigma} dz\\
& \geq C (|y| - z) u(\bar{x}_j)^{-1} - C (|y| - z) u(\bar{x}_j)^{-\frac{2n}{n-2\sigma}} \\
& \geq C (|y| - z ) u(\bar{x}_j)^{-1}, 
\endaligned
$$
where we have used $\mathcal{L}^n(\Sigma_j) =0$ for all $j$ and $u(\bar{x}_j)  \to \infty$ as $j\to \infty$.   Similarly, for $|y| \geq \bar{\lambda}_j +1$ and $y \in \Xi_j$, we have
$$
J(\lambda, w_j, y)\geq C  u(\bar{x}_j)^{-1} - C  u(\bar{x}_j)^{-\frac{2n}{n-2\sigma}}  \geq C  u(\bar{x}_j)^{-1}. 
$$
Thus, \eqref{y-05} is verified. 

By \eqref{y-01} and \eqref{y-05},    there exists $\varepsilon_1(j)  \in (0, \frac{1}{2})$   such that 
$$
w_j(y) - (w_j)_{\bar{\lambda}_j}(y) \geq J(\bar{\lambda}_j, w_j,y) \geq C u(\bar{x}_j)^{-1} \geq \frac{\varepsilon_1(j)}{|y|^{n-2\sigma}} 
$$
for all $|y| \geq  \bar{\lambda}_j+1$, $y \in \Xi_j$.  This, together with the explicit formula of  $(w_j)_{\lambda}(y)$, yields  that there exists $0< \varepsilon_2(j) < \varepsilon_1(j)$ such that for any $\bar{\lambda}_j\leq \lambda \leq \bar{\lambda}_j+\varepsilon_2(j)$, 
\begin{equation}\label{1wj}
\aligned
w_j(y) - (w_j)_\lambda(y) & \geq \frac{\varepsilon_1(j)}{|y|^{n-2\sigma}} + \left(  (w_j)_{\bar{\lambda}_j}(y) - (w_j)_\lambda(y)  \right)\\
& \geq  \frac{\varepsilon_1(j)}{2|y|^{n-2\sigma}}~~~~~~ \forall ~ |y| \geq \bar{\lambda}_j +1,  ~ y\in \Xi_j. 
\endaligned
\end{equation}
For $\varepsilon_j \in \left( 0, \varepsilon_2(j) \right)$ which we choose below, by \eqref{y-01} and \eqref{y-05} we have, for $\bar{\lambda}_j\leq \lambda \leq \bar{\lambda}_j +\varepsilon_j $ and for $\lambda \leq |y| \leq \bar{\lambda}_j +1$, 
$$
\aligned
w_j(y) - (w_j)_\lambda(y) & \geq \int_{\lambda \leq |z| \leq \bar{\lambda}_j +1}  K(0,\lambda; y, z) \left( w_j(z)^{\frac{n+2\sigma}{n-2\sigma}} - (w_j)_\lambda(z)^{\frac{n+2\sigma}{n-2\sigma}} \right) dz\\
& ~~~ +     \int_{\bar{\lambda}_j +2  \leq |z| \leq \bar{\lambda}_j +3}  K(0,\lambda; y, z) \left( w_j(z)^{\frac{n+2\sigma}{n-2\sigma}} - (w_j)_\lambda(z)^{\frac{n+2\sigma}{n-2\sigma}} \right) dz \\
& \geq -C \int_{\lambda \leq |z| \leq \lambda +\varepsilon_j}   K(0,\lambda; y, z) (|z| -\lambda) dz\\
& ~~~ + \int_{\lambda+\varepsilon_j \leq |z| \leq \bar{\lambda}_j +1}  K(0,\lambda; y, z) \left( (w_j)_{\bar{\lambda}_j}(z)^{\frac{n+2\sigma}{n-2\sigma}} - (w_j)_\lambda(z)^{\frac{n+2\sigma}{n-2\sigma}} \right) dz \\
& ~~~ + \int_{ \bar{\lambda}_j +2  \leq |z| \leq \bar{\lambda}_j +3 }  K(0,\lambda; y, z) \left( w_j(z)^{\frac{n+2\sigma}{n-2\sigma}} - (w_j)_\lambda(z)^{\frac{n+2\sigma}{n-2\sigma}} \right) dz,
\endaligned
$$
where we have used 
$$
| w_j(z)^{\frac{n+2\sigma}{n-2\sigma}} - (w_j)_\lambda(z)^{\frac{n+2\sigma}{n-2\sigma}}  |\leq C(|z| - \lambda)
$$
in the second inequality. By \eqref{1wj}  there exists $\delta_j >0$ such that
$$
w_j(z)^{\frac{n+2\sigma}{n-2\sigma}} - (w_j)_\lambda(z)^{\frac{n+2\sigma}{n-2\sigma}} \geq \delta_j ~~~~~~ \textmd{for}  ~ \bar{\lambda}_j +2  \leq |z| \leq \bar{\lambda}_j +3. 
$$
Since $\|w_j\|_{C^1(B_{\lambda_0 + 2})}\leq C$ (independent of $j$),  there exists some constant $C>0$ independent of both $\varepsilon$ and $j$  such that for $\bar{\lambda}_j\leq \lambda \leq \bar{\lambda}_j +\varepsilon_j $, 
$$
| (w_j)_{\bar{\lambda}_j}(z)^{\frac{n+2\sigma}{n-2\sigma}} - (w_j)_\lambda(z)^{\frac{n+2\sigma}{n-2\sigma}}  |\leq C(\lambda - \bar{\lambda}_j)\leq C \varepsilon_j~~~~~ \forall ~ \lambda \leq |z| \leq \bar{\lambda}_j+1.
$$
For any  $\lambda \leq |y| \leq \bar{\lambda}_j+1$, one can estimate the integrals of the kernel $K$ (or, see \cite{JX19}):   
$$
\aligned
\int_{\lambda+\varepsilon_j  \leq |z| \leq \bar{\lambda}_j +1}  K(0,\lambda; y, z) dz &  \leq \left|  \int_{\lambda+\varepsilon_j \leq |z| \leq \bar{\lambda}_j +1}  \left( \frac{1}{|y -z|^{n-2\sigma}} -  \frac{1}{|y^{0,\lambda} -z|^{n-2\sigma}}\right) dz  \right| \\
& ~~~~ + \int_{\lambda+\varepsilon_j \leq |z| \leq \bar{\lambda}_j +1}  \left|  \left(\frac{\lambda}{|y|}\right)^{n-2\sigma} -1 \right|   \frac{1}{|y^{0,\lambda} -z|^{n-2\sigma}} dz\\
%& \leq
%\begin{cases}
%C(\varepsilon_j^{2\sigma-1} +1) |y^{0,\lambda} - y| + C(|y| - \lambda)  &\textmd{if} ~ \sigma\neq 1/2\\
%C(|\ln \varepsilon_j| +1) |y^{0,\lambda} - y| + C(|y| - \lambda)  &\textmd{if} ~ \sigma = 1/2
%\end{cases}\\
& \leq C (\varepsilon_j^{2\sigma-1}  +  |\ln \varepsilon_j|  +1) (|y| - \lambda) 
\endaligned
$$
and 
$$
\aligned
\int_{ \lambda \leq |z| \leq \lambda +\varepsilon_j }   K(0,\lambda; y, z) (|z| -\lambda) dz & \leq \left|  \int_{ \lambda \leq |z| \leq \lambda +\varepsilon_j }   \left( \frac{|z| -\lambda}{|y -z|^{n-2\sigma}} -  \frac{|z| -\lambda}{|y^{0,\lambda} -z|^{n-2\sigma}}\right) dz  \right| \\
& ~~~ + \varepsilon_j  \int_{ \lambda \leq |z| \leq \lambda +\varepsilon_j }  \left|  \left(\frac{\lambda}{|y|}\right)^{n-2\sigma} -1 \right|   \frac{1}{|y^{0,\lambda} -z|^{n-2\sigma}} dz\\
& \leq C (|y| - \lambda) \varepsilon_j^{2\sigma/n} + C\varepsilon_j(|y| - \lambda) \\
& \leq C (|y| - \lambda) \varepsilon_j^{2\sigma/n}. 
\endaligned
$$
%where 
%$$
%\aligned
%I & :=  \left|  \int_{ \lambda \leq |z| \leq \lambda +\varepsilon_j }   \left( \frac{|z| -\lambda}{|y -z|^{n-2\sigma}} -  \frac{|z| -\lambda}{|y^{0,\lambda} -z|^{n-2\sigma}}\right) dz  \right| \\
%& \leq C \varepsilon_j (\varepsilon_j^{2\sigma-1} + |\ln \varepsilon_j| +1) (|y| - \lambda) ~~~~~ \textmd{if}~  |y| \geq \lambda +10\varepsilon_j.
%\endaligned
%$$
%If  $\lambda < |y| \leq \lambda+10\varepsilon_j$, then
%$$
%\aligned
%I & \leq \left|  \int_{ \lambda \leq |z| \leq \lambda + 10 (|y| - \lambda) }   \left( \frac{|z| -\lambda}{|y -z|^{n-2\sigma}} -  \frac{|z| -\lambda}{|y^{0,\lambda} -z|^{n-2\sigma}}\right) dz  \right| \\
%& ~~~~  + \left|   \int_{ \lambda + 10 (|y| - \lambda)  \leq  |z|  \leq \lambda + \varepsilon_j }   \left( \frac{|z| -\lambda}{|y -z|^{n-2\sigma}} -  \frac{|z| -\lambda}{|y^{0,\lambda} -z|^{n-2\sigma}}\right) dz  \right|  \\
%& \leq C (|y| - \lambda) \int_{ \lambda \leq |z| \leq \lambda + 10 (|y| - \lambda) }   \left( \frac{1}{|y -z|^{n-2\sigma}} +  \frac{1}{|y^{0,\lambda} -z|^{n-2\sigma}}\right) dz \\
%&~~~~ + C|y - y^{0,\lambda}| \int_{ \lambda + 10 (|y| - \lambda)  \leq  |z|  \leq \lambda + \varepsilon_j }   \frac{|z| -\lambda}{|y -z|^{n-2\sigma+1}}  dz \\
%& \leq C (|y| - \lambda) \varepsilon_j^{2\sigma/n}. 
%\endaligned
%$$
Therefore,   using  \eqref{K011}  we have  for  $\lambda < |y| \leq \bar{\lambda}_j + 1$ that 
$$
\aligned
& w_j(y) - (w_j)_\lambda(y) \\
&~~ \geq -C \varepsilon_j^{2\sigma/n} (|y| - \lambda) + \delta_1\delta_j (|y| - \lambda) \int_{\bar{\lambda}_j +2  \leq |z| \leq \bar{\lambda}_j +3} \frac{1}{|y-z|} dz\\
& ~~ \geq \left(\delta_1\delta_jc -C \varepsilon_j^{2\sigma/n} \right) (|y| - \lambda) \geq 0
\endaligned
$$
if $\varepsilon_j$ is sufficiently small.  This  and  \eqref{1wj}  contradict to the definition of $\bar{\lambda}_j$ if $\bar{\lambda}_j < \lambda_0$ for sufficiently large $j$. Thus, we proved the Claim 3. 

It follows that  \eqref{1-Local3} holds  and the proof of Theorem \ref{IEthm01} is completed. 
\hfill$\square$

\vskip0.10in

Now we combine Theorem \ref{IEthm01}  with the local integral representation in Theorem \ref{T-Lo094}   to give the proof of Theorem \ref{Thm01}.    

\vskip0.10in

{\noindent} {\it Proof of Theorem \ref{Thm01}.} Let $u\in C^{2m}(B_2 \backslash \Lambda)$ be a positive solution of \eqref{HOE} satisfying \eqref{HOE02}.  By Theorem \ref{T-Lo094}  there exists $0< \tau < 1/4$ independent of $\Lambda$ such that  
for any $x_0 \in \Lambda$, we have (up to the constant $c_{n,m}$)  
\begin{equation}\label{SS-0368}
u(x) =  \int_{B_\tau(x_0)} \frac{u(y)^{\frac{n+2m}{n-2m}}}{|x - y|^{n-2m}} dy + h_1(x) ~~~~~\textmd{for} ~ x\in B_\tau(x_0) \backslash \Lambda, 
\end{equation}
where $h_1(x)$ is a positive smooth function in $B_\tau(x_0)$.  For any $x_0 \in \Lambda$,  define
$$
v(x) = \left( \frac{\tau}{2} \right)^{\frac{n-2m}{2}} u\left( \frac{\tau}{2} x + x_0 \right),~~ h(x) = \left( \frac{\tau}{2} \right)^{\frac{n-2m}{2}} h_1 \left( \frac{\tau}{2} x + x_0 \right)
~~~ \textmd{in} ~  B_2 \backslash \Sigma, 
$$
where 
$$
\Sigma:=\left\{x \in \mathbb{R}^n : \frac{\tau}{2} x +  x_0 \in \Lambda \right\}. 
$$
Note that, in general, $\Sigma$  intersects  the boundary $\partial B_2$. 
Then $v \in L^{\frac{n+2m}{n-2m}}(B_2) \cap C(B_2\backslash \Sigma) $ and $v$ satisfies \eqref{Int} with $\sigma=m$  in $B_2 \backslash \Sigma$,  and $h\in C^1(B_2)$ is a positive smooth function. Using Theorem \ref{IEthm01} for $v$, we know that there exists a constant $C>0$ such that 
$$
v(x) \leq C [ \textmd{dist}(x, \Sigma) ]^{-\frac{n-2m}{2}} ~~~~~\textmd{for} ~ \textmd{all} ~  x\in B_1 \backslash \Sigma. 
$$
Rescaling back to $u$, we have 
$$
u(x) \leq C [ \textmd{dist}(x, \Lambda) ]^{-\frac{n-2m}{2}} ~~~~~\textmd{for} ~ \textmd{all} ~  x\in B_\tau(x_0) \backslash \Lambda. 
$$
Since $\Lambda$ is a compact set, the desired estimate \eqref{Est01} follows by a finite covering argument. 
 \hfill$\square$

\section{Asymptotic symmetry for local singular solutions}\label{S4000}
In this section, we first show the asymptotic symmetry of local singular solutions for the integral equation \eqref{Int}  in Theorem \ref{IEthm02},  and then show the same thing for the differential equation \eqref{HOE} stated in Theorem \ref{Thm02} by combining Theorem \ref{IEthm02}  with  the integral representation in Theorem \ref{T-Lo094}.  We still use the notations introduced at  the beginning  of Section \ref{S3000}.   

\vskip0.1in

\noindent{\it Proof of Theorem \ref{IEthm02}. }  Without loss of generality, we may assume $0\in \Sigma$. We will show that there exists a small $ \varepsilon>0 $ such that for every $x \in \overline{B}_{1/4} \backslash \Sigma$, 
\begin{equation}\label{SA-01}
u_{x, \lambda} (y) \leq u(y) ~~~~~ \textmd{for}  ~ \textmd{all} ~ y\in B_{3/2} \backslash ( B_\lambda(x) \cup \Sigma), ~ 0< \lambda < \textmd{dist}(x, \Sigma) \leq  \varepsilon. 
\end{equation}

First of all, by Lemma 3.1 in \cite{JX19},  there exists a positive constant $0 < r_0 < \frac{1}{2}$ depending only on $n$, $\sigma$ and $\|\nabla\ln h\|_{L^\infty(B_{3/2})}$  such that for every $x\in B_1$ and $0< \lambda \leq r_0$ there holds
\begin{equation}\label{HH01}
h_{x,\lambda} (y) \leq h(y)~~~~~~~\forall ~ |y - x|\geq \lambda,~ y\in B_{3/2}. 
\end{equation}
Moreover, from the proof of Lemma 3.1 in \cite{JX19} (see ($20$) there) we know  that,  for every $x\in \overline{B}_{1/4} \backslash \Sigma$  there exists $0< r_x< \textmd{dist}(x, \Sigma)$ such that for all $0< \lambda\leq r_x$, 
\begin{equation}\label{3UU01}
u_{x,\lambda} (y) \leq u(y),  ~~~~~~~0< \lambda< |y - x| \leq  r_x. 
\end{equation}
By the equation \eqref{Int},  we have
\begin{equation}\label{3UU02}
u(x) \geq 4^{2\sigma-n} \int_{B_2} u(y)^{\frac{n+2\sigma}{n-2\sigma}} dy=:c_1> 0 ~~~~~ \textmd{for} ~ \textmd{all}  ~ x\in B_2 \backslash \Sigma,
\end{equation} 
and thus, we can find $0< \lambda_1 \ll r_x$ such that for all $0< \lambda \leq \lambda_1$, 
\begin{equation}\label{3UU03}
u_{x,\lambda} (y) \leq u(y),  ~~~~~~~ y\in B_{3/2} \backslash (B_{r_x}(x) \cup \Sigma). 
\end{equation}
Combining \eqref{3UU01} with  \eqref{3UU03},  we obtain that for every $0< \lambda\leq \lambda_1$, 
\begin{equation}\label{3UU04}
u_{x,\lambda} (y) \leq u(y),  ~~~~~~~ y\in B_{3/2} \backslash (B_\lambda(x) \cup \Sigma). 
\end{equation}
Therefore, 
$$
\aligned
\bar{\lambda}(x):=\sup\{ & 0< \mu < \textmd{dist}(x, \Sigma) ~ | ~ u_{x,\lambda} (y) \leq u(y), \forall ~ y\in B_{3/2} \backslash (B_\lambda(x) \cup \Sigma), \\
&~     \forall ~ 0< \lambda<\mu \}
\endaligned
$$
is well defined for any $x \in \overline{B}_{1/4} \backslash \Sigma$ and is positive. 

Next we show that there exists a small $\varepsilon >0$ such that $\bar{\lambda}(x)=\textmd{dist}(x, \Sigma)$ for all $x\in \overline{B}_{1/4}$  and  $0< \textmd{dist}(x, \Sigma) \leq \varepsilon$.   For brevity, we denote $\bar{\lambda}=\bar{\lambda}(x) $ in the below. 

For any  $x\in \overline{B}_{1/4}$ and $\bar{\lambda} \leq \lambda < \textmd{dist}(x, \Sigma) \leq r_0$,  by \eqref{HH01}, \eqref{Id-01} and \eqref{Id-02}  we have for $y \in B_{3/2}$ that 
$$
u(y) - u_{x,\lambda}(y) \geq \int_{B_1\backslash B_\lambda(x)}  K(x,\lambda;y,z) \left( u(z)^{\frac{n+2\sigma}{n-2\sigma}} - u_{x,\lambda}(z)^{\frac{n+2\sigma}{n-2\sigma}} \right) dz + J(\lambda, u, y), 
$$
where
$$
\aligned
J(\lambda, u, y)  & =\int_{B_2\backslash B_1} K(x,\lambda;y,z) \left( u(z)^{\frac{n+2\sigma}{n-2\sigma}} - u_{x,\lambda}(z)^{\frac{n+2\sigma}{n-2\sigma}}  \right) dz \\
&~~~~~   - \int_{B_2^c} K(x,\lambda;y,z)  u_{x,\lambda}(z)^{\frac{n+2\sigma}{n-2\sigma}} dz. 
\endaligned 
$$
For $y\in B_1^c$, $x\in \overline{B}_{1/4}$  and  $\bar{\lambda} \leq  \lambda < \textmd{dist}(x, \Sigma) < \frac{1}{10}$, we have 
$$
 \left| x + \frac{\lambda^2(y - x)}{ |y -x|^2 }  -x \right| \leq \frac{4}{3} \lambda^2 < \frac{1}{2} \textmd{dist}(x, \Sigma) 
$$
and 
$$
 \left| x + \frac{\lambda^2(y - x)}{ |y -x|^2 } \right| \leq \frac{1}{2} \textmd{dist}(x, \Sigma) + |x| <1. 
$$
Hence 
$$
\textmd{dist}\left( x + \frac{\lambda^2(y - x)}{ |y -x|^2 }, \Sigma \right) \leq \left| x + \frac{\lambda^2(y - x)}{ |y -x|^2 }  -x \right| + \textmd{dist}(x, \Sigma)  \leq \frac{3}{2} \textmd{dist}(x, \Sigma)
$$
and
$$
\textmd{dist}\left( x + \frac{\lambda^2(y - x)}{ |y -x|^2 }, \Sigma \right) \geq  \textmd{dist}(x, \Sigma) - \left| x + \frac{\lambda^2(y - x)}{ |y -x|^2 }  -x \right|  \geq \frac{1}{2} \textmd{dist}(x, \Sigma).  
$$
It follows from Theorem \ref{IEthm01} that 
$$
u \left( x + \frac{\lambda^2(y - x)}{ |y -x|^2 } \right) \leq C \textmd{dist}(x, \Sigma)^{-\frac{n-2\sigma}{2}}. 
$$
Thus, for all $y \in B_1^c$, 
\begin{equation}\label{3UU090}
\aligned
u_{x,\lambda} (y) &= \left( \frac{\lambda}{|y - x|} \right)^{n-2\sigma} u \left( x + \frac{\lambda^2(y - x)}{ |y -x|^2 } \right) \\
& \leq  C  \lambda^{n-2\sigma} \textmd{dist}(x, \Sigma)^{-\frac{n-2\sigma}{2}}  \leq C \textmd{dist}(x, \Sigma)^{\frac{n-2\sigma}{2}} \leq C \varepsilon^{\frac{n-2\sigma}{2}}
\endaligned 
\end{equation}
for any  $x\in \overline{B}_{1/4}$ and $\bar{\lambda} \leq  \lambda < \textmd{dist}(x, \Sigma) < \varepsilon< \frac{1}{10}$.  By \eqref{3UU02} we obtain that for any $x\in \overline{B}_{1/4}$, 
\begin{equation}\label{MS001}
u_{x,\lambda} (y) \leq C \varepsilon^{\frac{n-2\sigma}{2}} \leq \frac{c_1}{2} <  u(y) ~~~~ \forall~ y \in B_2\backslash (B_1 \cup \Sigma),   ~ \forall ~ \bar{\lambda}\leq  \lambda< \textmd{dist}(x, \Sigma) \leq \varepsilon 
\end{equation}
if $\varepsilon$ is sufficiently small.

For $y\in B_1\backslash (B_\lambda(x) \cup \Sigma)$, $x\in \overline{B}_{1/4}$ and $\bar{\lambda} \leq  \lambda < \textmd{dist}(x, \Sigma) < \varepsilon$, by \eqref{3UU02} and \eqref{3UU090}, using the similar arguments as in proving \eqref{y-05} and noticing that $\mathcal{L}^n(\Sigma)=0$,   we have
\begin{equation}\label{MS002}
\aligned
J(\lambda, u, y) & \geq \int_{B_2\backslash B_1} K(x,\lambda;y,z) \left( c_1^{\frac{n+2\sigma}{n-2\sigma}} - C \varepsilon^{\frac{n+2\sigma}{2}}  \right) dz \\
&~~~~~   - C \int_{B_2^c} K(x,\lambda;y,z)  \left(\frac{1}{|z -x|} \right)^{n+2\sigma} [\textmd{dist}(x, \Sigma)]^{\frac{n+2\sigma}{2}} dz \\
& \geq \frac{1}{2} c_1^{\frac{n+2\sigma}{n-2\sigma}}  \int_{B_2\backslash B_1} K(x,\lambda;y,z) dz - C\varepsilon^{\frac{n+2\sigma}{2}} \int_{B_2^c} K(x,\lambda;y,z)  \frac{1}{|z -x|^{n+2\sigma}} dz \\
& \geq  \frac{1}{2} c_1^{\frac{n+2\sigma}{n-2\sigma}}  \int_{B_{7/4}\backslash B_{5/4}} K(0,\lambda;y-x,z) dz\\
&~~~~~ - C\varepsilon^{\frac{n+2\sigma}{2}} \int_{B_{7/4}^c} K(0,\lambda;y-x,z)  \frac{1}{|z|^{n+2\sigma}} dz\\
& \geq C_2 (|y -x| - \lambda),
\endaligned
\end{equation}
if we let $\varepsilon$ be sufficiently small, where $C_2$ is a positive constant independent of $x$. If $\bar{\lambda} < \textmd{dist}(x, \Sigma) \leq  \varepsilon$ for some $x\in \overline{B}_{1/4}$, using \eqref{MS001} and  \eqref{MS002} with  the integral estimates techniques as in the proof of Theorem \ref{IEthm01}, the moving sphere procedure may continue beyond $\bar{\lambda}$ where we get a contradiction. Thus, we obtain $\bar{\lambda}(x)=\textmd{dist}(x, \Sigma)$ for $x\in \overline{B}_{1/4}$ and $0< \textmd{dist}(x, \Sigma) \leq \varepsilon$, where $\varepsilon$ is sufficiently small.  Therefore, \eqref{SA-01} is proved.  

Let $r>0$ small (less that $\varepsilon^2$), $x_1, x_2\in \Pi_r^{-1} (z)$ with $z \in \overline{B}_{1/8} \cap \Sigma$ be such that
$$
u(x_1)=\max_{\Pi_r^{-1} (z)} u(x),~~~~~ u(x_2)=\min_{\Pi_r^{-1} (z)} u(x). 
$$
Let $e_1=x_1 -z$, $e_2=x_2-z$, $x_3=x_1 + \frac{\varepsilon(e_1 - e_2)}{4|e_1- _2|}$. Then $e_1, e_2 \in (T_z\Sigma)^\bot$ and thus, $e_2 -e_1\in (T_z\Sigma)^\bot$. Let $\lambda=\sqrt{\frac{\varepsilon}{4} (|e_1 -e_2| + \frac{\varepsilon}{4})}$. It is easy to check that  $0< \lambda < |x_3 -z|= \textmd{dist}(x_3, \Sigma)< \varepsilon$ and $|x_3| < 1/4$.   From \eqref{SA-01} we obtain 
$$
u_{x_3,\lambda}(x_2) \leq u(x_2). 
$$
On the other hand,  the definition of $u_{x_3,\lambda}$ gives 
$$
\aligned
u_{x_3,\lambda}(x_2) & = \left( \frac{\lambda}{|e_1 - e_2| + \varepsilon/4} \right)^{n-2\sigma} u(x_1)\\
& = \left( \frac{1}{ 4|e_1 - e_2|/\varepsilon + 1 } \right)^{\frac{n-2\sigma}{2}} u(x_1)\\
& \geq \left( \frac{1}{ 8r/\varepsilon + 1 } \right)^{\frac{n-2\sigma}{2}} u(x_1).
\endaligned
$$
Thus, 
$$
\max_{\Pi_r^{-1} (z)} u(x)  \leq (8r/\varepsilon + 1 )^{\frac{n-2\sigma}{2}}  \min_{\Pi_r^{-1} (z)} u(x). 
$$
This implies that
$$
u(x)=u(x^\prime) (1 + O(r)) ~~~~~ \textmd{for} ~ \textmd{all} ~ x, x^\prime \in \Pi_r^{-1} (z)~~~~ \textmd{as}  ~r\to 0,
$$
where $O(r)$ is uniform for $z\in \overline{B}_{1/8} \cap \Sigma$. The proof of Theorem \ref{IEthm02} is completed. 
\hfill$\square$

\vskip0.1in

\noindent{\it Proof of Theorem \ref{Thm02}.}  The proof is very similar to that of Theorem \ref{Thm01}. Using Theorems \ref{IEthm02}  and \ref{T-Lo094},   by a rescaling argument and a  covering  argument,  there exists a small real number $\varepsilon >0$ such that 
$$
\max_{\Pi_r^{-1} (z)} u(x)  \leq (8r/\varepsilon + 1 )^{\frac{n-2\sigma}{2}}  \min_{\Pi_r^{-1} (z)} u(x)
$$
for all $z\in \Lambda$ and small $r>0$. Thus, we have
$$
u(x)=u(x^\prime) (1 + O(r)) ~~~~~ \textmd{for} ~ \textmd{all} ~ x, x^\prime \in \Pi_r^{-1} (z)~~~~ \textmd{as}  ~r\to 0,
$$
where $O(r)$ is uniform for $z\in \Lambda$. Theorem \ref{Thm02} is proved. 
\hfill$\square$

\section{Symmetry for global singular solutions}\label{S5000}
In this section, we prove Theorem \ref{IEthm03} and then  Theorem \ref{Thm03} follows immediately by using Theorem \ref{T-Glo}.   Finally,  we also give the proof of Corollary \ref{Cor01}.  

\vskip0.1in

\noindent{\it Proof of Theorem \ref{IEthm03}}. Without loss of generality, we assume that
\begin{equation}\label{55-S01}
\mathop{\limsup_{x \in \mathbb{R}^n \backslash \mathbb{R}^k}}\limits_{ x \to 0}  u(x) =\infty,
\end{equation}
where  $\mathbb{R}^k$ is a $k$-dimensional subspace of $\mathbb{R}^n$ with $0 \leq k \leq n-1$.  Denote $\mathbb{R}^{n-k} = (\mathbb{R}^k)^\bot$ as the orthogonal complement  of $\mathbb{R}^k$.  
\vskip0.10in

{\it Claim 1.} For every $x \in \mathbb{R}^{n-k} \backslash \{0\}$, there exists a real number $\lambda_2 \in (0, |x|)$ such that for any $0< \lambda < \lambda_2$, we have
\begin{equation}\label{5SC101}
u_{x,\lambda} (y) \leq u(y) ~~~~~ \forall ~ |y-x| \geq \lambda,~ y \in \mathbb{R}^n \backslash \mathbb{R}^k. 
\end{equation}
The proof of Claim 1 consists of two steps. 

{\it Step 1.}  We show that there exists $0< \lambda_1 < |x|$ such that for any $0< \lambda<\lambda_1$, 
\begin{equation}\label{5S0-0-1}
u_{x,\lambda} (y) \leq u(y)~~~~~ \forall ~0< \lambda\leq  |y - x| \leq \lambda_1.
\end{equation}
By Theorem 2.5 in \cite{JLX17}, we know that $u \in C^1({\mathbb{R}^n \backslash \mathbb{R}^k})$.  Suppose
$$
|\nabla \ln u| \leq C_1 ~~~~~ \textmd{in} ~ B_{|x|/2}(x)
$$  
for some constant $C_1>0$.  Then  we have
\begin{equation}\label{Mono01}
\aligned
\frac{d}{dr} (r^{\frac{n-2\sigma}{2}} u(x + r\theta)) &= r^{\frac{n-2\sigma}{2} -1} u(x + r\theta) \left( \frac{n-2\sigma}{2} - r \frac{\nabla u \cdot \theta}{u}\right) \\
&\geq r^{\frac{n-2\sigma}{2} -1} u(x + r\theta) \left( \frac{n-2\sigma}{2} - C_1 r \right) >0
\endaligned
\end{equation}
for all $0 < r < \lambda_1 :=\min\{ \frac{n-2\sigma}{2C_1}, \frac{|x|}{2}\}$ and $\theta \in \mathbb{S}^{n-1}$. For any $y \in B_{\lambda_1}(x)$, $0< \lambda < |y - x| \leq \lambda_1$, let $\theta=\frac{y-x}{|y-x|}$, $r_1 = |y - x|$ and $r_2=\frac{\lambda^2 r_1}{|y - x|^2}$.  Using \eqref{Mono01}  we have
$$
r_2^{\frac{n-2\sigma}{2}} u(x + r_2\theta) < r_1^{\frac{n-2\sigma}{2}} u(x + r_1 \theta). 
$$
That is,
$$
u_{x,\lambda} (y) \leq u(y), ~~~~~~0< \lambda\leq  |y - x| \leq \lambda_1.
$$

{\it Step 2.} We show that  there exists  $0< \lambda_2 <  \lambda_1  <|x| $ such that \eqref{5SC101} holds for all $0< \lambda < \lambda_2$.  By Fatou lemma, 
$$
 \mathop{ \liminf_{ x \in \mathbb{R}^n \backslash \mathbb{R}^k} }\limits_{ {|x| \to \infty} }   |x|^{n-2\sigma} u(x) =  \mathop{ \liminf_{ x \in \mathbb{R}^n \backslash \mathbb{R}^k} }\limits_{ {|x| \to \infty} }  \int_{\mathbb{R}^n} \frac{|x|^{n-2\sigma} u(y)^{\frac{n-2\sigma}{n-2\sigma}} }{|x - y|^{n-2\sigma}} dy  \geq \int_{\mathbb{R}^n} u(y)^{\frac{n-2\sigma}{n-2\sigma}} dy >0. 
$$
Consequently,   there exist two constants $c_1, R_1>0$ such that
\begin{equation}\label{5SC102}
u(y) \geq \frac{c_1}{|y|^{n-2\sigma}}~~~~~~~ \textmd{for} ~  \textmd{all} ~  |y|\geq R_1~  \textmd{and}  ~ y \in \mathbb{R}^n \backslash \mathbb{R}^k. 
\end{equation}
On the other hand, if $y \in B_{R_1} \backslash \mathbb{R}^k$, then by the equation \eqref{Whole-Int} and the positivity of $u$,  we obtain
$$
u(y) \geq \int_{B_{R_1}} \frac{u(z)^{\frac{n-2\sigma}{n-2\sigma}} }{|y - z|^{n-2\sigma}} dz \geq (2R_1)^{2\sigma -n} \int_{B_{R_1}} u(z)^{\frac{n-2\sigma}{n-2\sigma}} dz  >0.
$$
This, together with \eqref{5SC102}, implies that there exists $C>0$ such that 
$$
u(y) \geq \frac{C}{|y - x|^{n-2\sigma}} ~~~~~ \forall ~ |y - x| \geq \lambda_1, ~ y \in \mathbb{R}^n \backslash \mathbb{R}^k.
$$
Thus,  for sufficiently small  $\lambda_2 \in (0, \lambda_1 )$ and for any $0< \lambda< \lambda_2$,
$$
\aligned
u_{x,\lambda}(y) & = \left( \frac{\lambda}{|y - x|} \right)^{n-2\sigma} u \left( x + \frac{\lambda^2(y - x)}{ |y- x|^2 } \right) \\
& \leq \left( \frac{\lambda_2}{|y - x|} \right)^{n-2\sigma} \sup_{B_{\lambda_1}(x)} \leq u(y), ~~~ \forall ~ |y-x| \geq \lambda_1, ~ y \in \mathbb{R}^n \backslash \mathbb{R}^k. 
\endaligned 
$$
Estimate \eqref{5SC101} follows from \eqref{5S0-0-1} and the above.  Claim 1 is proved.

Now,  we can define 
$$
\aligned 
\bar{\lambda}(x):= \sup\{ & 0< \mu\leq |x|~ | ~ u_{x,\lambda}(y) \leq u(y),~ \forall ~ |y - x| \geq \lambda,~ y \in \mathbb{R}^n \backslash \mathbb{R}^k, \\
&~ \forall ~ 0< \lambda<\mu\}. 
\endaligned
$$
By Claim 1, $\bar{\lambda}(x)$ is well defined and $\bar{\lambda}(x)>0$. 

\vskip0.10in

{\it Claim 2.} $\bar{\lambda}(x) =|x|$ for all $x \in \mathbb{R}^{n-k} \backslash \{0\}$. 

\vskip0.10in

Suppose $\bar{\lambda}(x) < |x|$ for some $x \in \mathbb{R}^{n-k} \backslash \{0\}$. For brevity, we will denote $\bar{\lambda}=\bar{\lambda}(x) $ in the below.  By the definition of $\bar{\lambda}$, 
\begin{equation}\label{5CL6-01}
u_{x, \bar{\lambda} } (y) \leq u(y) ~~~~~~ \textmd{for} ~ \textmd{all} ~ |y - x|\geq \bar{\lambda}, ~ y \in \mathbb{R}^n \backslash \mathbb{R}^k.
\end{equation} 
Because of \eqref{55-S01}, we know that $u_{x, \bar{\lambda}} (y) \not\equiv u(y)$. For any $\bar{\lambda} \leq \lambda < |x|$, $y\in \mathbb{R}^n \backslash \mathbb{R}^k$ with $|y -x|\geq \lambda$,  by \eqref{Id-01} and \eqref{Id-02},  we have 
$$
u(y) - u_{x,\lambda}(y) 
=\int_{|z -x|\geq \lambda}  K(x,\lambda; y, z) \left( u(z)^{\frac{n+2\sigma}{n-2\sigma}} - u_{x,\lambda}(z)^{\frac{n+2\sigma}{n-2\sigma}} \right) dz. 
$$
It follows from the positivity of the kernel $K$ and $\mathcal{L}^n (\mathbb{R}^k)=0$  that 
$$
u_{x, \bar{\lambda} } (y) <  u(y) ~~~~~~ \textmd{for} ~ \textmd{all} ~ |y - x|\geq \bar{\lambda}, ~ y \in \mathbb{R}^n \backslash \mathbb{R}^k.
$$
Using Fatou lemma again, 
$$
\aligned
 &\mathop{ \liminf_{ y \in \mathbb{R}^n \backslash \mathbb{R}^k} }\limits_{ {|y| \to \infty} }   |y-x|^{n-2\sigma} ( u -u_{x,\bar{\lambda}} )(y) \\
 & ~~ =  \mathop{ \liminf_{ y \in \mathbb{R}^n \backslash \mathbb{R}^k} }\limits_{ {|y| \to \infty} }  
 \int_{|z -x|\geq \bar{\lambda}} |y-x|^{n-2\sigma}  K(x, \bar{\lambda}; y, z)  \left( u(z)^{\frac{n+2\sigma}{n-2\sigma}} - u_{x, \bar{\lambda}}(z)^{\frac{n+2\sigma}{n-2\sigma}} \right) dz \\
 &~~ \geq \int_{|z -x|\geq \bar{\lambda}} \left[ 1 - \left( \frac{\bar{\lambda}}{|z - x |} \right)^{n - 2\sigma} \right] \left( u(z)^{\frac{n+2\sigma}{n-2\sigma}} - u_{x, \bar{\lambda}}(z)^{\frac{n+2\sigma}{n-2\sigma}} \right) dz >0. 
\endaligned
$$
Consequently, there exist two constants $c_2, R_2 >0$ such that
\begin{equation}\label{5CL6-02}
( u -u_{x,\bar{\lambda}} )(y)  \geq \frac{c_2}{|y -x|^{n-2\sigma}}~~~~~~  \textmd{for} ~ \textmd{all} ~  |y -x |\geq R_2, ~ y \in \mathbb{R}^n \backslash \mathbb{R}^k, 
\end{equation} 
and for $\bar{\lambda} + 1\leq |y -x| \leq R_2$,  $y \in \mathbb{R}^n \backslash \mathbb{R}^k$, by \eqref{K022}  we have
$$
\aligned
u(y) - u_{x,\bar{\lambda}}(y)  &= \int_{|z -x|\geq \bar{\lambda} }  K(x, \bar{\lambda}; y, z) \left( u(z)^{\frac{n+2\sigma}{n-2\sigma}} - u_{x, \bar{\lambda}}(z)^{\frac{n+2\sigma}{n-2\sigma}} \right) dz \\
& \geq  \int_{ \bar{\lambda} +2 \leq |z -x| \leq \bar{\lambda} +8 }  \frac{\delta_3}{|y - z|^{n-2\sigma}} \left( u(z)^{\frac{n+2\sigma}{n-2\sigma}} - u_{x, \bar{\lambda}}(z)^{\frac{n+2\sigma}{n-2\sigma}} \right) dz \\
& \geq C_2 \int_{ \bar{\lambda} +2 \leq |z -x| \leq \bar{\lambda} +8 }  \left( u(z)^{\frac{n+2\sigma}{n-2\sigma}} - u_{x, \bar{\lambda}}(z)^{\frac{n+2\sigma}{n-2\sigma}} \right) dz >0
\endaligned
$$
for some $C_2>0$. Combining this with \eqref{5CL6-02}, we obtain that  there exists $\varepsilon_1 \in (0, 1)$ such that
\begin{equation}\label{5CL6-03}
(u - u_{x, \bar{\lambda}}) (y) \geq \frac{\varepsilon_1}{|y -x|^{n-2\sigma}} ~~~~~~  \textmd{for} ~ \textmd{all} ~  |y -x |\geq \bar{\lambda} +1, ~ y \in \mathbb{R}^n \backslash \mathbb{R}^k. 
\end{equation}
By \eqref{5CL6-03} and the explicit formula of $u_{x, \lambda}$,  there exists $0 < \varepsilon_2 < \varepsilon_1$ such that for all $\bar{\lambda} \leq \lambda \leq \bar{\lambda} + \varepsilon_2 < |x|$,  
\begin{equation}\label{5CL6-04}
\aligned
(u - u_{x, \lambda}) (y) & \geq \frac{\varepsilon_1}{|y -x|^{n-2\sigma}} + ( u_{x, \bar{\lambda}} - u_{x, \lambda}) (y) \\
 & \geq  \frac{\varepsilon_1}{2|y -x|^{n-2\sigma}}  ~~~~~~ \forall ~  |y -x |\geq \bar{\lambda} +1, ~ y \in \mathbb{R}^n \backslash \mathbb{R}^k. 
\endaligned
\end{equation} 
For $\varepsilon \in (0, \varepsilon_2)$ which we choose below, we have, for $\bar{\lambda} \leq \lambda \leq \bar{\lambda} + \varepsilon$ and for $y \in  \mathbb{R}^n \backslash \mathbb{R}^k$ with $\lambda \leq |y -x| \leq \bar{\lambda} +1$, 
$$
\aligned
u(y) - u_{x,\lambda}(y) 
& =\int_{|z -x|\geq \lambda}  K(x,\lambda; y, z) \left( u(z)^{\frac{n+2\sigma}{n-2\sigma}} - u_{x,\lambda}(z)^{\frac{n+2\sigma}{n-2\sigma}} \right) dz\\
&  \geq  \int_{\lambda \leq |z -x| \leq \bar{\lambda} +1}  K(x,\lambda; y, z) \left( u(z)^{\frac{n+2\sigma}{n-2\sigma}} - u_{x,\lambda}(z)^{\frac{n+2\sigma}{n-2\sigma}} \right) dz \\
&~~~~  + \int_{\bar{\lambda} + 2 \leq |z -x| \leq \bar{\lambda} +3}  K(x,\lambda; y, z) \left( u(z)^{\frac{n+2\sigma}{n-2\sigma}} - u_{x,\lambda}(z)^{\frac{n+2\sigma}{n-2\sigma}} \right) dz \\
& \geq -C \int_{\lambda \leq |z -x| \leq \bar{\lambda} +\varepsilon}  K(x,\lambda; y, z) (|z -x| -\lambda) dz \\
&~~~~ + \int_{\lambda+\varepsilon <  |z -x| \leq \bar{\lambda} +1}  K(x,\lambda; y, z) \left( u_{x,\bar{\lambda}}(z)^{\frac{n+2\sigma}{n-2\sigma}} - u_{x, \lambda}(z)^{\frac{n+2\sigma}{n-2\sigma}} \right) dz\\
&~~~~ + \int_{\bar{\lambda} + 2 \leq |z -x| \leq \bar{\lambda} +3}  K(x,\lambda; y, z) \left( u(z)^{\frac{n+2\sigma}{n-2\sigma}} - u_{x,\lambda}(z)^{\frac{n+2\sigma}{n-2\sigma}} \right) dz,
\endaligned
$$
where in the second inequality we have used
$$
| u(z)^{\frac{n+2\sigma}{n-2\sigma}} - u_{x,\lambda}(z)^{\frac{n+2\sigma}{n-2\sigma}} |\leq C (|z -x| -\lambda). 
$$
Because of \eqref{5CL6-04}, there exists $\delta >0$ such that for all  $\bar{\lambda} \leq  \lambda \leq \bar{\lambda} +\varepsilon$,  we have 
$$
u(z)^{\frac{n+2\sigma}{n-2\sigma}} - u_{x,\lambda}(z)^{\frac{n+2\sigma}{n-2\sigma}} > \delta ~~~~~~\forall ~\bar{\lambda} +2 \leq |z-x| \leq \bar{\lambda} +3,~ z \in \mathbb{R}^n \backslash \mathbb{R}^k. 
 $$
 It is also easy to see that there exists $C>0$ independent of $\varepsilon$ such that for all $\bar{\lambda} \leq  \lambda \leq \bar{\lambda} +\varepsilon$,
 $$
 | u_{x, \bar{\lambda}}(z)^{\frac{n+2\sigma}{n-2\sigma}} - u_{x,\lambda}(z)^{\frac{n+2\sigma}{n-2\sigma}} | \leq C (\lambda - \bar{\lambda}) \leq C\varepsilon ~~~~~~\forall ~ \bar{\lambda} \leq  \lambda  \leq |z-x| \leq \bar{\lambda} +1.  
 $$
Now, by a very similar estimate for the kernel $K$  as in the proof of Theorem \ref{IEthm01},  we can obtain that,  for $\bar{\lambda} \leq \lambda \leq \bar{\lambda} + \varepsilon$ and  for $y\in \mathbb{R}^n \backslash \mathbb{R}^k$ with $\lambda\leq |y -x| \leq \bar{\lambda} +1$,  
$$
u(y) - u_{x,\lambda} (y) \geq (\delta_1\delta_2 c - C \varepsilon^{\frac{2\sigma}{n}}) ( |y -x| -\lambda) \geq 0
$$
if $\varepsilon>0$ is sufficiently small. This and \eqref{5CL6-04} contradict   the definition of $\bar{\lambda}$.  The proof of Claim 2 is completed.   
Thus, we have shown that for every $x\in \mathbb{R}^{n-k} \backslash \{0\}$, 
\begin{equation}\label{SS555}
u_{x,\lambda}(y) \leq u(y)~~~~~~ \forall ~ |y - x| \geq \lambda,~ y \in \mathbb{R}^n \backslash \mathbb{R}^k, ~\forall ~ 0< \lambda< |x|. 
\end{equation}

For  any unit vector $e \in \mathbb{R}^{n-k}$,  for any $a >0$, for any  $\xi=(y, z)\in\mathbb{R}^n$ with $y \in \mathbb{R}^k$ and  $ z\in \mathbb{R}^{n-k}$  satisfying $(z - ae ) \cdot e <0$, and for any $R >a$, we have, by \eqref{SS555} with $x=Re$ and $\lambda = R - a$, 
$$
u(y, z)\geq u_{x,\lambda}(y,z)=\left( \frac{\lambda}{|\xi - x|}\right)^{n-2\sigma} u\left( x + \frac{\lambda^2(\xi -x)}{|\xi -x|^2} \right).
$$
Sending $R$ to infinity in the above,  we obtain 
\begin{equation}\label{SS666}
u(y, z) \geq u(y,  z - 2( z\cdot e  - a )e ).
\end{equation}
Since $z\in\mathbb{R}^{n-k}$ and $a>0$ are arbitrary, \eqref{SS666} gives the radial symmetry of $u$ in the $\mathbb{R}^{n-k}$-variables.  

In particular, if $k=0$, then $u$ is radially symmetric about the origin. Moreover,  \eqref{SS666} also gives
$$
u(z)=u(z_1, z_2, \dots, z_n) \geq u_a(z):=u(2a - z_1, z_2, \dots, z_n)~~~~~ \forall~ z_1\leq a,~ a>0.
$$
This implies that  $u$ is also monotonically decreasing about the origin.
Theorem \ref{IEthm03}  is established. 
\hfill$\square$

\vskip0.10in

\noindent{\it Proof of Theorem \ref{Thm03}. } It follows from Theorem \ref{T-Glo} and Theorem \ref{IEthm03}.  
\hfill$\square$

\vskip0.10in

Finally, we give the proof of Corollary \ref{Cor01}. 

\vskip0.10in

\noindent{\it Proof of Corollary \ref{Cor01}. } The proof is  just a combination of Theorem \ref{T-Glo},  Theorem \ref{IEthm03},  Theorem 5 of \cite{CLO05} and Theorem 1.1 of  \cite{JX19}.   For the reader's convenience, we include the details. By Theorems \ref{T-Glo} and \ref{IEthm03}, we know that $u$ is radially symmetric and monotonically decreasing about the origin and satisfies 
\begin{equation}\label{DY-0900}
u(x)= c_{n,m} \int_{\mathbb{R}^n} \frac{u(y)^{\frac{n+2m}{n-2m}}}{|x -y|^{n-2m}} dy~~~~~\textmd{for} ~  x\in \mathbb{R}^n \backslash \{0\}. 
\end{equation}
Hence,  for any $r>0$ and $\theta \in \mathbb{S}^{n-1}$, we have
$$
\aligned
 u(r \theta) & \geq  c_{n,m} \int_{B_r(0)} \frac{u(y)^{\frac{n+2m}{n-2m}}}{|r\theta -y|^{n-2m}} dy\\
& \geq c_{n,m} u(r)^{\frac{n+2m}{n-2m}} \int_0^r \left( \int_{ \partial B_1(0)}  \frac{1}{|r\theta -s\omega|^{n-2m}} d\omega \right) s^{n-1} ds\\
& = c_{n,m} r^{2m}  u(r)^{\frac{n+2m}{n-2m}} \int_0^1 \left( \int_{ \partial B_1(0)}  \frac{1}{|\theta -t \omega|^{n-2m}} d\omega \right) t^{n-1} dt\\
&= C_1 r^{2m}  u(r)^{\frac{n+2m}{n-2m}} 
\endaligned
$$
for some uniform constant $C_1>0$. It follows that
$$
u(r) \leq C_1 r^{-\frac{n - 2m}{2}}~~~~~~ \textmd{for} ~  \textmd{all} ~   r > 0. 
$$
This proves the upper bound in \eqref{dfgh}.   On the other hand,  by \eqref{DY-0900} we have for any $s=1, \dots, m-1$ that 
\begin{equation}\label{DY-0900-023}
(-\Delta)^s u(x)  = c(n, m, s) \int_{\mathbb{R}^n} \frac{u(y)^{\frac{n+2m}{n-2m}}}{|x -y|^{n-2m+2s}} dy  \geq 0 ~~~~~ \textmd{in} ~ \mathbb{R}^n \backslash \{0\}. 
\end{equation}
Since $0$ is a non-removable singularity,  using Theorem 1.1 of  \cite{JX19} we know there exists $C_2=C_2(n, m, u) >0$ such that 
$$
u(x)\geq C_2  |x|^{-\frac{n - 2m}{2}} ~~~~~~ \textmd{for} ~  x\in \mathbb{R}^n \backslash \{0\}.
$$
This completes the proof of Corollary \ref{Cor01}. 
\hfill$\square$

\vskip0.40in

\noindent{X. Du and H. Yang}\\
Department of Mathematics, The Hong Kong University of Science and Technology\\
Clear Water Bay, Kowloon, Hong Kong\\
E-mail addresses:  xduah@connect.ust.hk (X. Du)~~~~~  mahuiyang@ust.hk (H. Yang)

\end{document}